\numberwithin{equation}{section}
\newtheorem{definition}{Definition}[section]
\newtheorem{lemma}[definition]{Lemma}
\newtheorem{theorem}[definition]{Theorem}
\newtheorem{proposition}[definition]{Proposition}
\newtheorem{corollary}[definition]{Corollary}
\newtheorem{remarkth}[definition]{Remark}
\newtheorem{example}[definition]{Example}
\newenvironment{remark}{\begin{remarkth}\upshape}{\hfill$\diamond$\end{remarkth}}
\renewcommand{\emph}[1]{{\bfseries\itshape{#1}}}
\newcommand{\R}{\mathbb{R}} 
\newcommand{\N}{\mathbb{N}} 
\newcommand{\C}{\mathcal{C}}
\newcommand{\lcf}{\lbrack\! \lbrack}
\newcommand{\rcf}{\rbrack\! \rbrack}
\newcommand{\lvec}[1]{\overleftarrow{#1}}
\newcommand{\qquand}{\qquad\text{and}\qquad}
\newcommand{\quand}{\quad\text{and}\quad}
\newcommand\prol{\@ifstar{\@proldf}{\@prolpf}} 
\def\@prolpf{\@ifnextchar[{\@prolpf@wrt}{\@prolpf@}}
\def\@prolpf@wrt[#1]#2{\@ifnextchar[{\@prolpf@wrt@at{#1}{#2}}{\@prolpf@wrt@{#1}{#2}}}
\def\@prolpf@wrt@at#1#2[#3]{\prolsymbol^{#1}_{#3}#2}
\def\@prolpf@wrt@#1#2{\prolsymbol^{#1}#2}
\def\@prolpf@#1{\@ifnextchar[{\@prolpf@at{#1}}{\@prolpf@@{#1}}}
\def\@prolpf@at#1[#2]{\prolsymbol_{#2}#1}
\def\@prolpf@@#1{\prolsymbol#1}
\def\@proldf{\@ifnextchar[{\@proldf@wrt}{\@proldf@}}
\def\@proldf@wrt[#1]#2{\@ifnextchar[{\@proldf@wrt@at{#1}{#2}}{\@proldf@wrt@{#1}{#2}}}
\def\@proldf@wrt@at#1#2[#3]{\prolsymbol^{*#1}_{#3}#2}
\def\@proldf@wrt@#1#2{\prolsymbol^{*#1}#2}
\def\@proldf@#1{\@ifnextchar[{\@proldf@at{#1}}{\@proldf@@{#1}}}
\def\@proldf@at#1[#2]{\prolsymbol^*_{#2}#1}
\def\@proldf@@#1{\prolsymbol^*#1}
\def\prolsymbol{\mathcal{L}}
\newcommand{\TEE}[1][]{\mathcal{T}^E_{#1}E}
\newcommand{\X}{\mathcal{X}}
\newcommand{\Y}{\mathcal{Y}}
\newcommand{\V}{\mathcal{V}}
\newcommand{\T}{\mathcal{T}}
\renewcommand{\P}{\mathcal{P}}
\def\lcf{\lbrack\! \lbrack}
\def\rcf{\rbrack\! \rbrack}
\newcommand{\cinfty}[1]{C^\infty(#1)}
\newcommand{\set}[2]{\left\{\,#1\left.\vphantom{#1#2}\,\right\vert\,#2\,
\right\}}
\newcommand{\pd}[2]{\frac{\partial #1}{\partial #2}}
\newcommand{\Sec}[1]{\operatorname{Sec}(#1)}
\newcommand{\sode}{{\textsc{sode}}}
\begin{document}

\title[Constrained mechanics on Lie algebroids]
{Singular lagrangian systems and variational constrained  mechanics on Lie algebroids}

\author[D. Iglesias]{D. Iglesias}
\address{D.\ Iglesias:
Instituto de Matem\'aticas y F{\'\i}sica Fundamental, Consejo
Superior de Investigaciones Cient\'{\i}ficas, Serrano 123, 28006
Madrid, Spain} \email{iglesias@imaff.cfmac.csic.es}

\author[J.\ C.\ Marrero]{J.\ C.\ Marrero}
\address{J.\ C.\ Marrero:
Departamento de Matem\'atica Fundamental, Facultad de
Ma\-te\-m\'a\-ti\-cas, Universidad de la Laguna, La Laguna,
Tenerife, Canary Islands, Spain} \email{jcmarrer@ull.es}

\author[D.\ Mart\'{\i}n de Diego]{D. Mart\'{\i}n de Diego}
\address{D.\ Mart\'{\i}n de Diego:
Instituto de Matem\'aticas y F{\'\i}sica Fundamental, Consejo
Superior de Investigaciones Cient\'{\i}ficas, Serrano 123, 28006
Madrid, Spain} \email{d.martin@imaff.cfmac.csic.es}

\author[D.\ Sosa]{D.\ Sosa}
\address{D.\ Sosa:
Departamento de Econom\'{\i}a Aplicada, Facultad de CC. EE. y
Empresariales, Universidad de La Laguna, La Laguna, Tenerife, Canary
Islands, Spain} \email{dnsosa@ull.es}

\keywords{Lie algebroids, Lagrangian Mechanics, Hamiltonian
Mechanics, constraint algorithm, reduction, singular Lagrangian
systems, vakonomic mechanics, variational calculus.}

\subjclass[2000]{17B66, 37J60, 70F25, 70H30, 70H33, 70G45}

\begin{abstract}
The purpose of this paper is describe Lagrangian Mechanics for
constrained systems on Lie algebroids, a natural framework which
covers a wide range of situations (systems on Lie groups,
quotients by the action of a Lie group, standard tangent
bundles...). In particular, we are interested in two cases:
\emph{singular Lagrangian systems} and \emph{vakonomic mechanics}
(variational constrained mechanics). Several examples illustrate
the interest of these developments.
\end{abstract}

\maketitle

\tableofcontents
\section{Introduction}

There is a vast literature around the Lagrangian formalism in
mechanics justified by the central role played by these systems in
the foundations of modern mathe\-ma\-tics and physics. In some
interesting systems some problems often arise  due to their
singular nature that give rise to the presence of constraints
manifesting that the evolution problem is not well posed
(\emph{internal constraints}). Constraints can also manifest a
priori restrictions on the states of the system which are often
imposed either by physical arguments or by external conditions
(\emph{external constraints}). Both cases are of considerable
importance.

Systems with \emph{internal constraints} are quite interesting
since many dynamical systems are given in terms of non-symplectic
forms instead of the more habitual symplectic ones. The more
frequent case appears in the Lagrangian formalism of singular
mechanical systems which are a commonplace in many physical
theories (as in Yang-Mills theories, gravitation, etc). Also, in
systems that appears as a limit (as in Chern-Simons lagrangians);
for instance consider the following lagrangian
\[
L=\frac{1}{2} m_i (\dot{q}^i)^2 +e A_i(q)\dot{q}^i-V(q)
\]
in the limit $m_i\rightarrow 0$ for some $i$. In other cases, it
is necessary work with a new singular lagrangian in a extended
space since the original lagrangian is ``ill defined" (only
locally defined on the original space), as it happens for the
electron monopole system (see \cite{AC}).

Another motivation for the present work is the study of lagrangian
systems  subjected to \emph{external constraints} (holonomic and
nonholonomic) \cite{Bl}. These systems have  a wide application in
many different areas: engineering, optimal control theory,
mathematical economics (growth economic theory), subriemannian
geometry, motion of microorganisms, etc.

Constrained variational calculus have a rich geometric structure.
Many of these systems usually exhibit invariance under the action
of a Lie group of symmetries and they can be notably simplified
using their symmetric properties by reducing the degrees of
freedom of the original system. In previous studies it is imposed
a separate study for each class of systems since the lack of a
unified framework for dealing simultaneously with all the systems.

 Recent investigations have lead to a unifying geometric framework to
covering these plethora of particular situations. It is precisely
the underlying structure of a Lie algebroid on the phase space
which allows a unified treatment. This idea was first introduced
by Weinstein \cite{weinstein} in order to define a Lagrangian
formalism which is general enough to account for the various types
of systems. The geometry and dynamics on Lie algebroids have been
extensively studied during the past years. In particular, in
\cite{mart}, E. Mart{\'\i}nez developed a geometric formalism of
mechanics on Lie algebroids similar to Klein's formalism of the
ordinary Lagrangian mechanics and, more recently, a description of
the Hamiltonian dynamics on a Lie algebroid was given
in~\cite{LMM,Medina}. The key concept in this theory is the
prolongation, ${\mathcal T }^E E$, of the Lie algebroid over the
fibred projection $\tau$ (for the Lagrangian formalism) and the
prolongation, ${\mathcal T}^E E^*$, over the dual fibred
projection $\tau^*: E^*\to Q$ (for the Hamiltonian formalism). See
\cite{LMM} for more details. Of course, when the Lie algebroid is
$E=TQ$ we obtain that ${\mathcal T}^E E=T(TQ)$ and ${\mathcal
T}^EE^*=T(T^*Q)$, recovering the classical case. Another approach
to the theory was discussed in \cite{GGU}. The existence of
symmetries in these systems makes interesting to generalize the
Gotay-Nester-Hinds algorithm \cite{GNH} to the case of Lie
algebroids with a presymplectic section. These results are easily
extended to the case of implicit differential equations on Lie
algebroids.

The second author and collaborators analyzed the case of
nonholonomic me\-cha\-nics on Lie algebroids \cite{CoLeMaMa}. Now,
we also pretend to study singular Lagrangian systems and vakonomic
mechanics on Lie algebroids (obtained through the application of a
constrained variational principle).

The paper is organized as follows. In Section \ref{algebroides},
we recall the notion of a Lie algebroid and several aspects
related with it. In particular, we describe the prolongation
${\mathcal T}^EE$ of a Lie algebroid $E$ over the projection $\tau
:E\to Q$ and how to use this construction to develop Lagrangian
mechanics on a Lie algebroid $E$ with a Lagrangian function
$L:E\to \R$, introducing several important objects, such as the
Lagrangian energy $E_L$ and the Cartan 2-section $\omega _L$. When
the Lagrangian function is regular (that is, $\omega _L$ is
nondegenerate), we have existence and uniqueness of solutions for
the Euler-Lagrange equations. However, if the Lagrangian is
singular (or degenerate) we cannot guarantee these results.
Motivated by this fact, in Section \ref{pre} we introduce a
constraint algorithm for presymplectic Lie algebroids which
generalizes the well-known Gotay-Nester-Hinds algorithm. In
addition, we show that a Lie algebroid morphism which relates two
presymplectic Lie algebroids induces a relation between the two
associated constraint algorithms.

In Section \ref{Sec:singular}, we apply the results of Section
\ref{pre} to singular Lagrangian systems on Lie algebroids. More
precisely, given a Lie algebroid $\tau :E\to Q$ and a singular
Lagrangian function $L:E\to \R$, we look for a solution $X$ of the
presymplectic system $({\mathcal T}^E E, \omega _L, d^{{\mathcal
T}^EE}E_L)$ which is a SODE along the final constraint
submanifold. An example for an Atiyah algebroid illustrates our
theory.

In Section \ref{sec:vak}, we develop a geometric description of
vakonomic mechanics on Lie algebroids. In this setting, given a
Lie algebroid $\tau :E\to Q$, we have a pair $(L,M)$ where $L$ is
a Lagrangian function on $E$ and $M\subseteq E$ is a constraint
submanifold. In Section \ref{subsec:vak-bracket}, we deduce the
vakonomic equations using our constraint algorithm and study the
particular case when it stops in the first step. In this
situation, if the restriction of the presymplectic 2-section to
the final constraint algebroid is symplectic, one can introduce
the vakonomic bracket, which allows us to give the evolution of
the observables. On the other hand, it is well know that classical
vakonomic systems can be obtained from a constrained variational
principle. This can also be done for vakonomic systems on Lie
algebroids, as it is shown in Section \ref{subsec:variational}. In
the particular case when we do not have constraints, our approach
can be seen as the Skinner-Rusk formulation of Lagrangian
Mechanics on Lie algebroids. This is explained in Section
\ref{subsec:examples}, where we also illustrate our results with
several interesting examples: If $E$ is the standard Lie algebroid
$\tau_{TQ}: TQ\to Q$, then we recover some well-known results (see
\cite{CLMM}) for vakonomic systems; in the case when $E=\mathfrak
g$, a real Lie algebra of finite dimension, we are able to model a
certain class of Optimal Control problems on Lie groups (see
\cite{KM,Kr}); for the Atiyah algebroid, we analyze some problems
related with reduction in subriemannian geometry by means of the
non-holonomic connection \cite{BKMM}. Finally, we study  Optimal
Control on Lie algebroids  as vakonomic systems and, as an
illustration of our techniques, we find the equations of motion of
the Plate-Ball system.

\section{Lie algebroids} \label{algebroides}
Let $E$ be a vector bundle of rank $n$ over a manifold $Q$ of
dimension $m$ and $\tau:E\to Q$ be the vector bundle projection.
Denote by $\Gamma(E)$ the $C^\infty(Q)$-module of sections of
$\tau:E\to Q$. A \emph{Lie algebroid structure }
$(\lcf\cdot,\cdot\rcf,\rho)$ on $E$ is a Lie bracket
$\lcf\cdot,\cdot\rcf$ on the space $\Gamma(E)$ and a bundle map
$\rho:E\to TQ$, called \emph{the anchor map}, such that if we also
denote by $\rho:\Gamma(E)\to {\frak X}(Q)$ the homomorphism of
$C^\infty(Q)$-modules induced by the anchor map, then
\[
\lcf X,fY\rcf=f\lcf X,Y\rcf + \rho(X)(f)Y,
\]
for $X,Y\in \Gamma(E)$ and $f\in C^\infty(Q)$. The triple
$(E,\lcf\cdot,\cdot\rcf,\rho)$ is called \emph{a Lie algebroid
over} $Q$ (see \cite{Ma}).

If $(E,\lcf\cdot,\cdot\rcf,\rho)$ is a Lie algebroid over $Q,$ then
the anchor map $\rho:\Gamma(E)\to {\frak X}(Q)$ is a homomorphism
between the Lie algebras $(\Gamma(E),\lcf\cdot,\cdot\rcf)$ and
$({\frak X}(Q),[\cdot,\cdot])$.

Standard examples of Lie algebroids are real Lie algebras of
finite dimension and the tangent bundle $TQ$ of an arbitrary
manifold $Q.$

Another example of a Lie algebroid may be constructed as follows.
Let $\pi:P\to Q$ be a principal bundle with structural group $G$.
Denote by $\Phi:G\times P\to P$ the free action of $G$ on $P$ and
by $T\Phi:G\times TP\to TP$ the tangent action of $G$ on $TP$.
Then, one may consider the quotient vector bundle
$\tau_P|G:TP/G\to Q=P/G$ and the sections of this vector bundle
may be identified with the vector fields on $P$ which are
invariant under the action $\Phi$. Using that every $G$-invariant
vector field on $P$ is $\pi$-projectable and the fact that the
standard Lie bracket on vector fields is closed with respect to
$G$-invariant vector fields, we can induce a Lie algebroid
structure on $TP/G$. The resultant Lie algebroid is called
\emph{the Atiyah (gauge) algebroid associated with the principal
bundle} $\pi:P\to Q$ (see \cite{LMM,Ma}).

If $(E,\lcf\cdot,\cdot\rcf,\rho)$ is a Lie algebroid, one may
define \emph{the differential of $E$}, $d^E:\Gamma(\wedge^k
E^*)\to \Gamma(\wedge^{k+1}E^*)$, as follows
$$\begin{array}{l} d^E \mu(X_0,\dots, X_k)=\displaystyle
\sum_{i=0}^{k} (-1)^i\rho(X_i)(\mu(X_0,\dots,
\widehat{X_i},\dots, X_k))\\[10pt] \hspace{2.75cm}+\displaystyle\sum_{i<
j}(-1)^{i+j}\mu(\lcf X_i,X_j\rcf,X_0,\dots,
\widehat{X_i},\dots,\widehat{X_j},\dots ,X_k),
\end{array}$$
for $\mu\in \Gamma(\wedge^k E^*)$ and $X_0,\dots ,X_k\in
\Gamma(E).$ It follows that $(d^E)^2=0$. Moreover, if
$X\in\Gamma(E)$, one may introduce, in a natural way, \emph{the
Lie derivative with respect to $X$,} as the operator ${\mathcal
L}^E_X:\Gamma(\wedge^kE^*)\to \Gamma(\wedge^k E^*)$ given by
${\mathcal L}^E_X=i_X\circ d^E + d^E\circ i_X.$

Note that if $E = TQ$ and $X \in \Gamma(E) = {\frak X}(Q)$ then
$d^{TQ}$ and ${\mathcal L}_{X}^{TQ}$ are the usual differential and
the usual Lie derivative with respect to $X$, respectively.

If we take local coordinates $(x^i)$ on $Q$ and a local basis
$\{e_A\}$ of sections of $E$, then we have the corresponding local
coordinates $(x^i,y^A)$ on $E$, where $y^A(e)$ is the $A$-th
coordinate of $e\in E$ in the given basis. Such coordinates
determine local functions $\rho_A^i$, ${\mathcal C}_{A B}^{C}$ on
$Q$ which contain the local information of the Lie algebroid
structure and, accordingly, they are called \emph{the structure
functions of the Lie algebroid.} They are given by
\[
\rho(e_A)=\rho_A^i\frac{\partial }{\partial x^i}\;\;\;\mbox{ and
}\;\;\; \lcf e_A,e_B\rcf={\mathcal C}_{A B}^C e_C.
\]
These functions should satisfy the relations
$$\rho_A^j\frac{\partial \rho_B^i}{\partial x^j}
-\rho_B^j\frac{\partial \rho_A^i}{\partial x^j}= \rho_C^i{\mathcal
C}_{AB}^C ,$$
$$\sum_{cyclic(A,B,C)}\Big ( \rho_{A}^i\frac{\partial {\mathcal
C}_{BC}^D}{\partial x^i} + {\mathcal C}_{A F}^D {\mathcal
C}_{BC}^F \Big )=0,$$
which are usually called \emph{the structure equations}.

If $f\in C^\infty(Q)$, we have that
$$d^E f=\frac{\partial f}{\partial x^i}\rho_A^i e^A,$$
where $\{e^A\}$ is the dual basis of $\{e_A\}$. On the other hand,
if $\theta\in \Gamma(E^*)$ and $\theta=\theta_C e^C$ it follows that
$$d^E \theta=(\frac{\partial \theta_C}{\partial
x^i}\rho^i_B-\frac{1}{2}\theta_A {\mathcal C}^A_{BC})e^{B}\wedge
e^C.$$
In particular, \[d^E x^i=\rho_A^ie^A,\;\;\; d^E
e^A=-\frac{1}{2}{\mathcal C}_{B C}^{A} e^B\wedge e^C.\]

On the other hand, if $(E,\lcf\cdot,\cdot \rcf,\rho)$ and
$(E',\lcf\cdot,\cdot\rcf', \rho')$ are Lie algebroids over $Q$ and
$Q'$, respectively, then a morphism of vector bundles $(F,f)$ from
$E$ to $E'$

\begin{picture}(375,90)(80,20)
\put(195,20){\makebox(0,0){$Q$}}
\put(250,25){$f$}\put(210,20){\vector(1,0){80}}
\put(310,20){\makebox(0,0){$Q'$}} \put(185,50){$\tau$}
\put(195,70){\vector(0,-1){40}} \put(320,50){$\tau'$}
\put(310,70){\vector(0,-1){40}} \put(195,80){\makebox(0,0){$E$}}
\put(250,85){$F$}\put(210,80){\vector(1,0){80}}
\put(310,80){\makebox(0,0){$E'$}} \end{picture}

\vspace{.5cm} \noindent is a \emph{Lie algebroid morphism} if
\begin{equation}\label{Qorph}
d^E ((F,f)^*\phi')= (F, f)^*(d^{E'}\phi'), \;\;\; \mbox{ for
}\phi'\in \Gamma(\wedge^k(E')^*).
\end{equation}
Note that $(F, f)^*\phi'$ is the section of the vector bundle
$\wedge^kE^*\to Q$ defined by
\[
((F,f)^*\phi')_x(a_1,\dots ,a_k)=\phi'_{f(x)}(F(a_1),\dots ,F(a_k)),
\]
for $x\in Q$ and $a_1,\dots ,a_k\in E_{x}$. We remark that
(\ref{Qorph}) holds if and only if
$$\begin{array}{l} d^E(g'\circ f)=(F, f)^{*}(d^{E'}g'), \;\;\;
\mbox{for }g'\in
C^\infty(Q'),\\[5pt] d^E((F,f)^*\alpha')=(F,
f)^*(d^{E'}\alpha'),\;\;\;\mbox{for } \alpha'\in \Gamma((E')^*).
\end{array}$$

If $(F,f)$ is a Lie algebroid morphism, $f$ is an injective
immersion and $F_{|E_x}:E_x\rightarrow E'_{f(x)}$ is injective,
for all $x\in Q$, then $(E,\lcf\cdot,\cdot\rcf_{E},\rho_{E})$ is
said to be a \emph{Lie subalgebroid} of
$(E',\lcf\cdot,\cdot\rcf_{E'},\rho_{E'})$.

If $Q=Q'$ and $f=id:Q\to Q$ then, it is easy prove that the pair
$(F,id)$ is a Lie algebroid morphism if and only if
\[F\lcf X,Y\rcf=\lcf FX,FY \rcf',\;\;\; \rho'(FX)=\rho(X),\]
for $X,Y\in \Gamma(E).$

Let $(E, \lcf \cdot , \cdot \rcf, \rho)$ be a Lie algebroid over a
manifold $Q$ and $E^{*}$ be the dual bundle to $E$. Then, $E^{*}$
admits a \emph{linear Poisson structure} $\{ \cdot , \cdot
\}_{E^{*}}$, that is,
\[
\{ \cdot , \cdot \}_{E^{*}}: C^{\infty}(E^{*}) \times
C^{\infty}(E^{*}) \to C^{\infty}(E^{*})
\]
is a $\R$-bilinear map,

$
\begin{array}{ll}
\{F, G \}_{E^{*}} = -\{G, F \}_{E^{*}},
&\mbox{(skew-symmetry),}\\[5pt]
\{F F', G \}_{E^{*}} = F \{F', G \}_{E^{*}}
+ F' \{F, G \}_{E^{*}},& \mbox{(the Leibniz rule),}\\[5pt]
\{F, \{G, H \}_{E^{*}} \}_{E^{*}} + \{G, \{H, F \}_{E^{*}}
\}_{E^{*}} &\\[5pt] + \{H, \{F, G \}_{E^{*}} \}_{E^{*}} = 0,
&\mbox{(the Jacobi identity),}
\end{array}
$

\noindent for $F, F', G, H \in C^{\infty}(E^{*})$ and, in
addition,
\[
P, P' \mbox{ linear functions on } E^{*}  \Rightarrow \{P,
P'\}_{E^{*}} \mbox{ is a linear function on } E^{*}.
\]
If $(x^{i})$ are local coordinates on an open subset $U$ of $Q$,
$\{e_{A}\}$ is a local basis of $\Gamma(E)$ on $U$ and $F, G \in
C^{\infty}(E^{*})$ then the local expression of the Poisson
bracket of $F$ and $G$ is
\begin{equation}\label{Poisson}
\{F, G\}_{E^{*}} = \rho^{i}_{A}\left( \displaystyle \frac{\partial
F}{\partial x^{i}} \frac{\partial G}{\partial p_{A}} -
\frac{\partial F}{\partial p_{A}} \frac{\partial G}{\partial
x^{i}}\right)  - {\mathcal C}_{AB}^{C} p_{C} \frac{\partial
F}{\partial p_{A}} \frac{\partial G}{\partial p_{B}},
\end{equation}
where $(x^{i}, p_{A})$ are the corresponding coordinates on
$E^{*}$ (for more details, see \cite{LMM}).

\subsection{The prolongation of a Lie algebroid over a fibration}

Let $(E,\lcf\cdot,\cdot\rcf,\rho)$ be a Lie algebroid of rank $n$
over a manifold $Q$ of dimension $m$ and $\pi : P \to Q$ be a
fibration, that is, a surjective submersion.

We consider the subset ${\mathcal T}^EP$ of $E\times TP$ defined
by ${\mathcal T}^EP=\displaystyle \bigcup_{p\in P} {\mathcal
T}^E_pP$, where
\[
{\mathcal T}^E_pP=\{(b,v)\in E_{\pi(p)}\times
T_{p}P\,|\,\rho(b)=(T_{p}\pi)(v)\}
\]
and $T\pi:TP\to TQ$ is the tangent map to $\pi$.

Denote by $\tau^{\pi}:{\mathcal T}^EP\to P$ the map given by
\[
\tau^{\pi}(b,v)=\tau_{P}(v),
\]
for $(b,v)\in {\mathcal T}^EP,$ $\tau_{P}:TP\to P$ being the
canonical projection. Then, if $m'$ is the dimension of $P$, one
may prove that
\[
\dim\,{\mathcal T}^E_pP=n+m'-m.
\]
Thus, we conclude that ${\mathcal T}^EP$ is a vector bundle over
$P$ of rank $n+m'-m$ with vector bundle projection
$\tau^{\pi}:{\mathcal T}^EP\to P.$

A section $\tilde{X}$ of $\tau^{\pi}: {\mathcal T}^{E}P \to P$ is
said to be \emph{projectable} if there exists a section $X$ of
$\tau: E \to Q$ and a vector field $U$ on $P$ which is
$\pi$-projectable to the vector field $\rho(X)$ and such that
$\tilde{X}(p) = (X(\pi(p)), U(p))$, for all $p \in P$. For such a
projectable section $\tilde{X}$, we will use the following
notation $\tilde{X} \equiv (X, U)$. It is easy to prove that one
may choose a local basis of projectable sections of the space
$\Gamma({\mathcal T}^{E}P)$.

The vector bundle $\tau^{\pi}: {\mathcal T}^{E}P \to P$ admits a
Lie algebroid structure $(\lcf \cdot , \cdot \rcf^{\pi},
\rho^{\pi})$. In fact,
\[
\lcf (X_{1}, U_{1}), (X_{2}, U_{2})\rcf^{\pi} = (\lcf X_{1},
X_{2}\rcf, [U_{1}, U_{2}]), \; \; \rho^{\pi}(X_{1}, U_{1}) =
U_{1}.
\]
The Lie algebroid $({\mathcal T}^{E}P, \lcf \cdot , \cdot
\rcf^{\pi}, \rho^{\pi})$ is called \emph{the prolongation of $E$
over $\pi$ or the $E$-tangent bundle to $P$}. Note that if
$pr_1:{\mathcal T}^EP\to E$ is the canonical projection on the
first factor, then the pair $(pr_1,\pi)$ is a morphism between the
Lie algebroids $({\mathcal
T}^EP,\lcf\cdot,\cdot\rcf^{\pi},\rho^{\pi})$ and
$(E,\lcf\cdot,\cdot \rcf,\rho)$ (for more details, see
\cite{LMM}).

\begin{example}
{\rm Let $(E,\lcf\cdot,\cdot \rcf,\rho)$ be a Lie algebroid of
rank $n$ over a manifold $Q$ of dimension $m$ and $\tau:E\to Q$ be
the vector bundle projection. Consider the prolongation ${\mathcal
T}^EE$ of $E$ over $\tau,$
\[
{\mathcal T}^EE=\{({e},v)\in E\times TE\,|\,
\rho({e})=(T\tau)(v)\}.
\]
${\mathcal T}^EE$ is a Lie algebroid over $E$ of rank $2n$ with
Lie algebroid structure $(\lcf\cdot,\cdot\rcf^{\tau},
\rho^{\tau})$.

If $(x^i)$ are local coordinates on an open subset $U$ of $Q$ and
$\{e_A\}$ is a basis of sections of the vector bundle
$\tau^{-1}(U)\to U$, then $\{{\mathcal X}_A,{\mathcal V}_A\}$ is a
basis of sections of the vector bundle
$(\tau^{\tau})^{-1}(\tau^{-1}(U))\to \tau^{-1}(U)$, where
$\tau^{\tau}:{\mathcal T}^EE\to E$ is the vector bundle projection
and
\[
\begin{array}{rcl}
{\mathcal
X}_A(e)&=&(e_A(\tau(e)),\rho_A^i\displaystyle\frac{\partial
}{\partial x^i}_{|e}),\\
{\mathcal V}_A(e)&=&(0,\displaystyle\frac{\partial }{\partial
y^{A}}_{|e}),
\end{array}
\]
for $e\in \tau^{-1}(U).$ Here, $\rho_A^i$ are the components of
the anchor map with respect to the basis $\{e_A\}$ and $(x^i,y^A)$
are the local coordinates on $E$ induced by the local coordinates
$(x^i)$ and the basis $\{e_A\}$. Using the local basis
$\{{\mathcal X}_A,{\mathcal V}_A\}$, one may introduce, in a
natural way, local coordinates $(x^i,y^A;z^A,v^A)$ on ${\mathcal
T}^EE.$ On the other hand, we have that
\[
\begin{array}{rcl}
\rho^{\tau}({\mathcal X}_A)&=&\rho_A^i\displaystyle\frac{\partial
}{\partial x^i},\;\;\; \rho^{\tau}({\mathcal
V}_A)=\displaystyle\frac{\partial
}{\partial y^A},\\[8pt]
\lcf{\mathcal X}_A,{\mathcal X}_B\rcf^{\tau}&=&{\mathcal
C}_{AB}^C{\mathcal
X}_C,\\[8pt]
\lcf{\mathcal X}_A,{\mathcal V}_B\rcf^{\tau}&=&\lcf {\mathcal V}_A,
{\mathcal V}_B\rcf^{\tau}=0,
\end{array}
\]
for all $A$ and $B$, ${\mathcal C}_{AB}^C$ being the structure
functions of the Lie bracket $\lcf\cdot,\cdot\rcf$ with respect to
the basis $\{e_A\}$.

The vector subbundle $({\mathcal T}^EE)^V$ of ${\mathcal T}^EE$
whose fibre at the point $e\in E$ is
$$({\mathcal T}^E_eE)^V=\{(0,v)\in E\times T_eE/\,(T_e\tau)(v)=0\}$$
is called \emph{the vertical subbundle}. Note that $({\mathcal
T}^EE)^V$ is locally generated by the sections $\{{\mathcal
V}_A\}$.

Two canonical objects on ${\mathcal T}^EE$ are \emph{the Euler
section} $\Delta$ and \emph{the vertical endomorphism $S$}. $\Delta$
is the section of ${\mathcal T}^EE\to E$ locally defined by
\begin{equation}
\label{Lioulo} \Delta = y^{A}\V_{A},
\end{equation}
and $S$ is the section of the vector bundle $({\mathcal
T}^EE)\otimes ({\mathcal T}^EE)^*\to E$ locally characterized by
the following conditions
\begin{equation}\label{endverlo}
S\X_{A} = \V_{A}, \makebox[.3cm]{} S\V_{A} = 0, \makebox[.3cm]{}
\mbox{ for all } A.
\end{equation}
Finally, a section $\xi$ of ${\mathcal T}^EE\to E$ is said to be a
\emph{second order differential equation} (SODE) on $E$ if
$S(\xi)=\Delta$ or, alternatively, $pr_1(\xi(e))=e$, for all $e\in
E$ (for more details, see \cite{LMM}). }
\end{example}

\begin{example}
{\rm

Let $(E,\lcf\cdot,\cdot \rcf,\rho)$ be a Lie algebroid of rank $n$
over a manifold $Q$ of dimension $m$ and $\tau^*:E^*\to Q$ be the
vector bundle projection of the dual bundle $E^*$ to $E$.

We consider the prolongation ${\mathcal T}^EE^*$ of $E$ over
$\tau^*,$
\[
{\mathcal T}^EE^*=\{({e}',v)\in E\times TE^*\,|\,
\rho({e}')=(T\tau^*)(v)\}.
\]
${\mathcal T}^EE^*$ is a Lie algebroid over $E^*$ of rank $2n$
with Lie algebroid structure $(\lcf\cdot,\cdot\rcf^{\tau^*},
\rho^{\tau^*})$.

If $(x^i)$ are local coordinates on an open subset $U$ of $Q$,
$\{e_A\}$ is a basis of sections of the vector bundle
$\tau^{-1}(U)\to U$ and $\{e^A\}$ is the dual basis of $\{e_A\}$,
then $\{{\mathcal Y}_A,{\mathcal P}^A\}$ is a basis of sections of
the vector bundle $(\tau^{\tau^*})^{-1}((\tau^*)^{-1}(U))\to
(\tau^*)^{-1}(U)$, where $\tau^{\tau^*}:{\mathcal T}^EE^*\to E^*$
is the vector bundle projection and
\[
\begin{array}{rcl}
{\mathcal
Y}_A(e^*)&=&(e_A(\tau^*(e^*)),\rho_A^i\displaystyle\frac{\partial
}{\partial x^i}_{|e^*}),\\
{\mathcal P}^A(e^*)&=&(0,\displaystyle\frac{\partial }{\partial
p_{A}}_{|e^*}),
\end{array}
\]
for $e^*\in (\tau^*)^{-1}(U).$ Here, $(x^i,p_A)$ are the local
coordinates on $E^*$ induced by the local coordinates $(x^i)$ and
the basis $\{e^A\}$ of $\Gamma(E^*)$. Using the local basis
$\{{\mathcal Y}_A,{\mathcal P}^A\}$, one may introduce, in a
natural way, local coordinates $(x^i,p_A;u^A,q_A)$ on ${\mathcal
T}^EE^*.$ If $\omega^*$ is a point of
$(\tau^{\tau^*})^{-1}((\tau^*)^{-1}(U))$, then $(x^i,p_A)$ are the
coordinates of the point $\tau^{\tau^*}(\omega^*)\in
(\tau^*)^{-1}(U)$ and
\[
\omega^*=u^A{\mathcal Y}_A(\tau^{\tau^*}(\omega^*)) + q_A {\mathcal
P}^A(\tau^{\tau^*}(\omega^*)).
\]
On the other hand, we have that
\[
\begin{array}{rcl}
\rho^{\tau^*}({\mathcal Y}_A)&=&\rho_A^i\displaystyle\frac{\partial
}{\partial x^i},\;\;\; \rho^{\tau^*}({\mathcal
P}^A)=\displaystyle\frac{\partial
}{\partial p_A},\\[8pt]
\lcf{\mathcal Y}_A,{\mathcal Y}_B\rcf^{\tau^*}&=&{\mathcal
C}_{AB}^C{\mathcal
Y}_C,\\[8pt]
\lcf{\mathcal Y}_A,{\mathcal P}^B\rcf^{\tau^*}&=&\lcf {\mathcal
P}^A, {\mathcal P}^B\rcf^{\tau^*}=0,
\end{array}
\]

\noindent for all $A$ and $B$. Thus, if $\{{\mathcal
Y}^A,{\mathcal P}_A\}$ is the dual basis of $\{{\mathcal
Y}_A,{\mathcal P}^A\}$, then
$$\begin{array}{rcl} d^{{\mathcal T}^EE^*}f&=&
\rho_A^i\displaystyle\frac{\partial f}{\partial x^i}{\mathcal
Y}^A+ \displaystyle\frac{\partial f}{\partial p_A} {\mathcal
P}_A,\\[8pt]
d^{{\mathcal T}^EE^*}{\mathcal Y}^C&=& \displaystyle -\frac{1}{2}
{\mathcal C}_{AB}^{C} {\mathcal Y}^A \wedge {\mathcal Y}^B,
\makebox[1cm]{} d^{{\mathcal T}^EE^*}{\mathcal P}_C=0,
\end{array}$$

\noindent for $f\in C^\infty(E^*).$

We may introduce a canonical section $\lambda_E$ of the vector
bundle $({\mathcal T}^EE^*)^*\to E^*$ as follows. If $e^*\in E^*$
and $(\tilde{e},v)$ is a point of the fiber of ${\mathcal T}^EE^*$
over $e^*$, then
$$\lambda_E(e^*)(\tilde{e},v)=e^*(\tilde{e}).$$
$\lambda_E$ is called \emph{the Liouville section of $({\mathcal
T}^EE^*)^*$.}

Now, \emph{the canonical symplectic section} $\Omega_E$ is the
nondegenerate closed 2-section defined by
\begin{equation}\label{sym}
\Omega_E=-d^{{\mathcal T}^EE^*} \lambda_E.
\end{equation}
In local coordinates,
$$\lambda_E(x^i,p_A)=p_A{\mathcal Y}^A ,$$
$$\Omega_E={\mathcal Y}^A\wedge {\mathcal P}_A + \frac{1}{2}
{\mathcal C}_{AB}^C p_C {\mathcal Y}^A\wedge {\mathcal Y}^B ,$$
(for more details, see \cite{LMM}).

}
\end{example}

\subsection{Lagrangian and Hamiltonian mechanics on Lie algebroids}
Given a Lagrangian function $L\in\cinfty{E}$ we define the
\emph{Cartan 1-section} $\theta_L\in\Gamma({(\T^EE)^*})$, the
\emph{Cartan 2-section} $\omega_L\in\Gamma({\wedge^2(\T^EE)^*})$
and the \emph{Lagrangian energy} $E_L\in C^{\infty}(E)$ as
$$\theta_L=S^*(d^{{\mathcal T}^EE}L) , \qquad \omega_L =
-d^{{\mathcal T}^EE}\theta_L\qquand
E_L=\mathcal{L}^{\T^EE}_{\Delta} L-L.$$
If $(x^i, y^{A})$ are local fibred coordinates on $E$,
$(\rho^i_{A}, {\mathcal C}^{C}_{AB})$ are the corresponding local
structure functions on $E$ and $\{ \X_{A}, \V_{A}\}$ is the
corresponding local basis of sections of $\T^EE$ then
\begin{equation}
\kern-3pt\label{omegaL} \omega_L\kern-3pt
=\kern-3pt\pd{^2L}{y^A\partial y^B}\X^A\wedge \V^B
\kern-2pt+\frac{1}{2}\left( \pd{^2L}{x^i\partial
y^A}\rho^i_B-\pd{^2L}{x^i\partial
y^B}\rho^i_A+\pd{L}{y^C}{\mathcal C}^C_{AB} \right)\X^A\wedge
\X^B\kern-2pt,
\end{equation}
\begin{equation}
\label{EL} E_L=\pd{L}{y^A}y^A-L.
\end{equation}

\noindent From (\ref{Lioulo}), (\ref{endverlo}), (\ref{omegaL})
and (\ref{EL}), it follows that
\begin{equation}
\label{2.4'} i_{SX} \omega_{L} = -S^*(i_{X}\omega_{L}),
\makebox[.3cm]{} i_{\Delta}\omega_{L} = -S^*(d^{{\mathcal
T}^EE}E_{L}),
\end{equation}
for $X \in \Gamma(\T^EE)$.

Now, a curve $t\to c(t)$ on $E$ is a solution of the
\emph{Euler-Lagrange equations} for $L$ if
\begin{itemize}
\item[-] $c$ is \emph{admissible} (that is, $\rho(c(t))=\dot{m}(t)$, where
$m=\tau\circ c$) and
\item[-] $\displaystyle{i_{(c(t), \dot{c}(t))}\omega_L(c(t))-d^{{\mathcal T}^EE}E_L
(c(t))=0}$, for all $t$.
\end{itemize}
If $c(t)=(x^i(t), y^A(t))$ then $c$ is a solution of the
Euler-Lagrange equations for $L$ if and only if
\begin{equation}\label{free-forces}
\dot{x}^i=\rho_A^iy^A,\;\;\;\;
\frac{d}{dt}\Bigl(\frac{\partial L}{\partial y^A}\Bigr) +
\frac{\partial L}{\partial y^C}{\mathcal C}_{AB}^C y^B -\rho_A
^i\frac{\partial L}{\partial x^i} =0.
\end{equation}

Note that if $E$ is the standard Lie algebroid $TQ$ then the above
equations are the classical Euler-Lagrange equations for $L: TQ
\to \R$.

On the other hand, the Lagrangian function $L$ is said to be
\emph{regular} if $\omega_L$ is a symplectic section, that is, if
$\omega_L$ is regular at every point as a bilinear form. In such a
case, there exists a unique solution $\xi_L$ verifying
\begin{equation}\label{symp}
i_{\xi_L}\omega_L-d^{{\mathcal T}^EE}E_L=0\; .
\end{equation}

In addition, using (\ref{2.4'}), it follows that
$i_{S\xi_{L}}\omega_{L} = i_{\Delta}\omega_{L} $ which implies
that $\xi_{L}$ is a \sode\ section. Thus, the integral curves of
$\xi_L$ (that is, the integral curves of the vector field
$\rho^{\tau}(\xi_L)$) are solutions of the Euler-Lagrange
equations for $L$. $\xi_L$ is called the \emph{Euler-Lagrange
section} associated with $L$.

>From (\ref{omegaL}), we deduce that the Lagrangian $L$ is regular
if and only if the matrix
$\displaystyle{(W_{AB})=\Big(\frac{\partial^2 L}{\partial
y^{A}\partial y^{B}}\Big)}$ is regular. Moreover, the local
expression of $\xi_L$ is
\[
\xi_L=y^A\X_A+f^A\V_A ,
\]
where the functions $f^A$ satisfy the linear equations
$$\pd{^2L}{y^B\partial y^A}f^B+\pd{^2L}{x^i\partial y^A}\rho^i_B
y^B +\pd{L}{y^C}{\mathcal C}^C_{AB}y^B -\rho^i_A\pd{L}{x^i} =0,
\mbox{ for all } A.$$

Another possibility is when the matrix
$\displaystyle{(W_{AB})=\Big(\frac{\partial^2 L}{\partial
y^{A}\partial y^{B}}\Big)}$ is non regular. This type of
lagrangians are called \emph{singular} or \emph{degenerate
lagrangians}. In such a case, $\omega_L$ is not symplectic and
Equation (\ref{symp}) has no solution, in general, and even if it
exists it will not be unique. In the next section, we will extend
the classical Gotay-Nester-Hinds algorithm \cite{GNH} for
presymplectic systems on Lie algebroids, which in particular will
be applied to the case of singular lagrangians in Section
\ref{Sec:singular}.

For an arbitrary Lagrangian function $L:E\to\R$, we introduce the
\emph{Legendre transformation} associated with $L$ as the smooth
map $leg_L:E\to E^*$ defined by
$$leg_L(e)(e')=\theta_L(e)(z'),$$
for $e,e'\in E_x$, where $z'\in{\mathcal T}^E_eE\subseteq
E_x\times T_eE$ satisfies
$$pr_1(z')=e',$$
$pr_1:{\mathcal T}^EE\to E$ being the restriction to ${\mathcal
T}^EE$ of the canonical projection $pr_1:E\times TE\to E$. The map
$leg_L:E\to E^*$ is well defined and its local expression is
\begin{equation}\label{leg-L}
leg_L(x^i,y^A)=(x^i,\displaystyle\frac{\partial L}{\partial y^A}).
\end{equation}

The Legendre transformation induces a Lie algebroid morphism
$${\mathcal T}\,leg_L:{\mathcal T}^EE\to{\mathcal T}^EE^*$$
over $leg_L:E\to E^*$ given by
$$({\mathcal T}\,leg_L)(e,v)=(e,(Tleg_L)(v)),$$
where $Tleg_L:TE\to TE^*$ is the tangent map to $leg_L:E\to E^*$.

We have that (see \cite{LMM})
\begin{equation}\label{pullbacks}
({\mathcal T}\,leg_L,leg_L)^*(\lambda_E)=\Theta_L,\;\;\;({\mathcal
T}\,leg_L,leg_L)^*(\Omega_E)=\omega_L.
\end{equation}

On the other hand, from (\ref{leg-L}), it follows that the
Lagrangian function $L$ is regular if and only if $leg_L:E\to E^*$
is a local diffeomorphism.

Next, we will assume that $L$ is \emph{hyperregular}, that is,
$leg_L:E\to E^*$ is a global diffeomorphism. Then, the pair
$({\mathcal T}\,leg_L,leg_L)$ is a Lie algebroid isomorphism.
Moreover, we may consider the \emph{Hamiltonian function}
$H:E^*\to\R$ defined by
$$H=E_L\circ leg_L^{-1}$$
and the \emph{Hamiltonian section} $\xi_H\in\Gamma({\mathcal
T}^EE^*)$ which is characterized by the condition
$$i_{\xi_H}\Omega_E=d^{{\mathcal T}^EE^*}H.$$

The integral curves of the vector field $\rho^{\tau^*}(\xi_H)$ on
$E^*$ satisfy the \emph{Hamilton equations for $H$}
$$\displaystyle\frac{dx^i}{dt}=\rho_A^i\frac{\partial H}{\partial
p_A},\;\;\;\frac{d p_A}{dt}=-\Big(\rho_A^i\frac{\partial
H}{\partial x^i}+{\mathcal C}_{AB}^Cp_C\frac{\partial H}{\partial
p_B}\Big)$$ for $i\in\{1,\dots,m\}$ and $A\in\{1,\dots,n\}$ (see
\cite{LMM}).

In addition, the Euler-Lagrange section $\xi_L$ associated with
$L$ and the Hamil\-to\-nian section $\xi_H$ are $({\mathcal
T}\,leg_L,leg_L)$-related, that is,
$$\xi_H\circ leg_L={\mathcal T}\,leg_L\circ\xi_L.$$

Thus, if $\gamma:I\to E$ is a solution of the Euler-Lagrange
equations associated with $L$, then $\mu=leg_L\circ\gamma:I\to
E^*$ is a solution of the Hamilton equations for $H$ and,
conversely, if $\mu:I\to E^*$ is a solution of the Hamilton
equations for $H$ then $\gamma=leg_L^{-1}\circ\mu$ is a solution
of the Euler-Lagrange equations for $L$ (for more details, see
\cite{LMM}).

\section{Constraint algorithm and reduction for presymplectic
Lie algebroids}\label{pre}

\subsection{Constraint algorithm for presymplectic Lie
algebroids}\label{sec3.1}

Let $\tau :E\to Q$ be a Lie algebroid and suppose that $\Omega \in
\Gamma(\wedge ^2E^*)$. Then, we can define the vector bundle
morphism $\flat_{\Omega}:E\to E^*$ (over the identity of $Q$) as
follows
$$\flat_{\Omega}(e)=i(e)\Omega(x),\makebox[1cm]{for}e\in E_x.$$

Now, if $x\in Q$ and $F_x$ is a subspace of $E_x$, we may
introduce the vector subspace $F_x^\perp$ of $E_x$ given by
$$F_x^\perp=\{e\in E_x \,|\, \Omega(x)(e,f)=0,\forall f\in F_x\}.$$

Then, using a well-known result (see, for instance, \cite{LeRo}),
we have that
\begin{equation}\label{Forortho}
dim F_{x}^\perp = dim E_{x} - dim F_{x} + dim (E_{x}^\perp \cap
F_{x}).
\end{equation}

On the other hand, if $\flat_{\Omega_x}={\flat_\Omega}_{|E_x}$ it
is easy to prove that
\begin{equation}\label{Subset}
\flat_{\Omega_{x}}(F_{x}) \subseteq (F_{x}^\perp)^{0},
\end{equation}
where $(F_{x}^\perp)^{0}$ is the annihilator of the subspace
$F_{x}^\perp$. Moreover, using (\ref{Forortho}), we obtain that
\[
dim (F_{x}^\perp)^{0} = dim F_{x} - dim (E_{x}^\perp \cap F_{x}) =
dim (\flat_{\Omega_{x}}(F_{x})).
\]
Thus, from (\ref{Subset}), we deduce that
\begin{equation}\label{flat1}
\flat_{\Omega_x}(F_x)=(F_x^\perp)^\circ.
\end{equation}

Next, we will assume that $\Omega$ is a presymplectic 2-section
($d^E\Omega =0$) and that $\alpha \in \Gamma(E^*)$ is a closed
1-section ($d^E\alpha =0$). Furthermore, we will assume that the
kernel of $\Omega$ is a vector subbundle of $E$.

The dynamics of the presymplectic system defined by $(\Omega,
\alpha)$ is given by a section $X\in \Gamma(E)$ satisfying the
dynamical equation
\begin{equation}\label{presym}
i_X\Omega =\alpha\; .
\end{equation}
In general, a section $X$ satisfying (\ref{presym}) cannot be found
in all points of $E$. First, we look for the points where
(\ref{presym}) has sense. We define
\[
Q_1  =\{ x\in Q \,|\, \exists e\in E_x:\; i(e)\Omega(x)=\alpha
(x)\}=\{  x\in Q \,|\, \alpha(x)\in\flat_{\Omega_x}(E_x)\}.
\]
>From (\ref{flat1}), it follows that
\begin{equation}\label{q1}
Q_1  =\{ x\in Q \,|\, \alpha (x)(e)=0,\makebox[1.5cm]{for all}e\in
Ker\Omega(x)=E_x^\perp \}.
\end{equation}

If $Q_1$ is an embedded submanifold of $Q$, then we deduce that
there exists $X:Q_1\to E$ a section of $\tau:E\to Q$ along $Q_1$
such that (\ref{presym}) holds. But $\rho (X)$ is not, in general,
tangent to $Q_1$. Thus, we have that to restrict to $E_1=\rho
^{-1} (TQ_1)$. We remark that, provided that $E_1$ is a manifold
and $\tau_1=\tau_{|E_1}:E_1\to Q_1$ is a vector bundle,
$\tau_1:E_1\to Q_1$ is a Lie subalgebroid of $E\to Q$.

Now, we must consider the subset $Q_2$ of $Q_1$ defined by
$$\begin{array}{cl}
Q_2&=\{ x\in Q_1 \,|\, \alpha
(x)\in\flat_{\Omega_x}((E_1)_x)=\flat_{\Omega_x}(\rho^{-1}(T_xQ_1))\}\\[5pt]
&=\{ x\in Q_1 \,|\, \alpha (x)(e)=0,\, \mbox{ for all } e\in
(E_1)_x^\perp=(\rho ^{-1} (T_xQ_1))^\perp \}. \end{array}$$

If $Q_2$ is an embedded submanifold of $Q_1$, then we deduce that
there exists $X:Q_2\to E_1$ a section of $\tau_1:E_1\to Q_1$ along
$Q_2$ such that (\ref{presym}) holds. However, $\rho(X)$ is not,
in general, tangent to $Q_2$. Therefore, we have that to restrict
to $E_2=\rho^{-1}(TQ_2)$. As above, if $\tau_2=\tau_{|E_2}:E_2\to
Q_2$ is a vector bundle, it follows that $\tau_2:E_2\to Q_2$ is a
Lie subalgebroid of $\tau_1:E_1\to Q_1$.

Consequently, if we repeat the process, we obtain a sequence of
Lie subalgebroids (by assumption):

\begin{picture}(500,70)(0,0)
\put(65,20){\makebox(0,0){$\dots$}} \put(72,20){$\hookrightarrow$}
\put(97,20){\makebox(0,5){$E_{k+1}$}}
\put(112,20){$\hookrightarrow$}
\put(132,20){\makebox(0,5){$E_{k}$}}
\put(142,20){$\hookrightarrow$}
\put(162,20){\makebox(0,0){$\dots$}}
\put(170,20){$\hookrightarrow$}
\put(190,20){\makebox(0,5){$E_{2}$}}
\put(200,20){$\hookrightarrow$}
\put(220,20){\makebox(0,5){$E_{1}$}}
\put(230,20){$\hookrightarrow$}
\put(260,20){\makebox(0,5){$E_{0}=E$}}
\put(94,37){$\tau_{k+1}$}\put(90,31){\vector(0,1){20}}
\put(133,37){$\tau_{k}$}\put(130,31){\vector(0,1){20}}
\put(191,37){$\tau_{2}$} \put(188,31){\vector(0,1){20}}
\put(221,37){$\tau_{1}$} \put(218,31){\vector(0,1){20}}
\put(252,37){$\tau_0=\tau$} \put(248,31){\vector(0,1){20}}
\put(65,55){\makebox(0,0){$\dots$}} \put(72,55){$\hookrightarrow$}
\put(97,55){\makebox(0,5){$Q_{k+1}$}}
\put(112,55){$\hookrightarrow$}
\put(132,55){\makebox(0,5){$Q_{k}$}}
\put(142,55){$\hookrightarrow$}
\put(162,55){\makebox(0,0){$\dots$}}
\put(170,55){$\hookrightarrow$}
\put(190,55){\makebox(0,5){$Q_{2}$}}
\put(200,55){$\hookrightarrow$}
\put(220,55){\makebox(0,5){$Q_{1}$}}
\put(230,55){$\hookrightarrow$}
\put(260,55){\makebox(0,5){$Q_{0}=Q$}}

 \end{picture}

\vspace{-0.5cm}\noindent where
\begin{equation}\label{qkmas1}
Q_{k+1}=\{ x\in Q_k \,|\, \alpha(x)(e)=0,\mbox{ for all }
e\in(\rho^{-1}(T_xQ_k))^\perp\} \end{equation} and
$$E_{k+1}=\rho^{-1}(TQ_{k+1}).$$

If there exists $k\in\N$ such that $Q_k=Q_{k+1}$, then we say that
the sequence stabilizes. In such a case, there exists a
well-defined (but non necessarily unique) dynamics on the final
constraint submanifold $Q_f=Q_k$. We write

\[
{Q}_f=Q_{k+1}=Q_k,\qquad E_f=E_{k+1}=E_k=\rho ^{-1} (TQ_k).
\]

\noindent Then, $\tau_f=\tau_k:E_f=E_k\to Q_f=Q_k$ is a Lie
subalgebroid of $\tau:E\longrightarrow Q$ (the Lie algebroid
restriction of $E$ to $E_f$). From the construction of the
constraint algorithm, we deduce that there exists a section $X\in
\Gamma(E_f)$, verifying (\ref{presym}). Moreover, if $X\in
\Gamma(E_f)$ is a solution of the equation (\ref{presym}), then
every arbitrary solution is of the form $X'=X+Y$, where
$Y\in\Gamma(E_f)$ and $Y(x)\in\ker \Omega(x)$, for all $x\in Q_f$.
In addition, if we denote by $\Omega_f$ and $\alpha_f$ the
restriction of $\Omega$ and $\alpha$, respectively, to the Lie
algebroid $E_f\longrightarrow Q_f$, we have that $\Omega_f$ is a
presymplectic 2-section and then any $X\in \Gamma(E_f)$ verifying
Equation (\ref{presym}) also satisfies
\begin{equation}\label{presym2}
i_X\Omega_f=\alpha_f
\end{equation}
but, in principle, there are solutions of (\ref{presym2})
which are not solutions of (\ref{presym}) since $\ker
\Omega\cap E_f \subset \ker\Omega_f$.

\begin{remark}
Note that one can generalize the previous procedure to the general
setting of \textit{implicit differential equations on a Lie
algebroid}. More precisely, let $\tau :E\to Q$ be a Lie algebroid
and $S\subset E$ be a submanifold of $E$ (not necessarily a vector
subbundle). Then, the corresponding sequence of submanifolds of
$E$ is
\[
\begin{array}{cl}
S_0&=S\\[5pt]
S_1&=S_0\cap \rho ^{-1} \big (T \tau (S_0)\big )\\
\vdots & \\
\\
S_{k+1}&=S_k\cap \rho ^{-1}\big (T \tau (S_k)\big )
\\
\vdots &
\end{array}
\]
In our case, $S_k=\rho ^{-1} (TQ_k)$ (equivalently, $Q_k=\tau
(S_k)$).
\end{remark}

\subsection{Reduction of presymplectic Lie
algebroids}\label{sec3.2}

Let  $(E,\lcf\cdot,\cdot \rcf,\rho)$ and
$(E'\kern-2pt,\lcf\cdot,\cdot\rcf'\kern-2pt,$ $\rho')$ be two Lie
algebroids over $Q$ and $Q'$, respectively. Suppose that
$\Omega\in\Gamma(\wedge^2E^*)$ (respectively,
$\Omega'\in\Gamma(\wedge^2(E')^*)$) is a presymplectic 2-section
on $\tau:E\to Q$ (respectively, $\tau':E'\to Q'$) and that
$\alpha\in\Gamma(E^*)$ (respectively, $\alpha'\in\Gamma((E')^*)$)
is a closed 1-section on $\tau:E\to Q$ (respectively, $\tau':E'\to
Q'$). Then, we may consider the corresponding dynamical equations
$$i_X\Omega=\alpha,\;\;X\in\Gamma(E),$$
$$i_{X'}\Omega'=\alpha',\;\;X'\in\Gamma(E').$$

If we apply our constraint algorithm to the first problem, we will
obtain a sequence of Lie subalgebroids of $\tau:E\to Q$

\begin{picture}(500,70)(0,0)
\put(65,20){\makebox(0,0){$\dots$}} \put(72,20){$\hookrightarrow$}
\put(97,20){\makebox(0,5){$E_{k+1}$}}
\put(112,20){$\hookrightarrow$}
\put(132,20){\makebox(0,5){$E_{k}$}}
\put(142,20){$\hookrightarrow$}
\put(162,20){\makebox(0,0){$\dots$}}
\put(170,20){$\hookrightarrow$}
\put(190,20){\makebox(0,5){$E_{2}$}}
\put(200,20){$\hookrightarrow$}
\put(220,20){\makebox(0,5){$E_{1}$}}
\put(230,20){$\hookrightarrow$}
\put(260,20){\makebox(0,5){$E_{0}=E$}}
\put(94,37){$\tau_{k+1}$}\put(90,31){\vector(0,1){20}}
\put(133,37){$\tau_{k}$}\put(130,31){\vector(0,1){20}}
\put(191,37){$\tau_{2}$} \put(188,31){\vector(0,1){20}}
\put(221,37){$\tau_{1}$} \put(218,31){\vector(0,1){20}}
\put(252,37){$\tau_0=\tau$} \put(248,31){\vector(0,1){20}}
\put(65,55){\makebox(0,0){$\dots$}} \put(72,55){$\hookrightarrow$}
\put(97,55){\makebox(0,5){$Q_{k+1}$}}
\put(112,55){$\hookrightarrow$}
\put(132,55){\makebox(0,5){$Q_{k}$}}
\put(142,55){$\hookrightarrow$}
\put(162,55){\makebox(0,0){$\dots$}}
\put(170,55){$\hookrightarrow$}
\put(190,55){\makebox(0,5){$Q_{2}$}}
\put(200,55){$\hookrightarrow$}
\put(220,55){\makebox(0,5){$Q_{1}$}}
\put(230,55){$\hookrightarrow$}
\put(260,55){\makebox(0,5){$Q_{0}=Q$}}
 \end{picture}

\vspace{-0.5cm}In a similar way, if we apply our constraint
algorithm to the second problem, we will obtain a sequence of Lie
subalgebroids of $\tau':E'\to Q'$

\begin{picture}(500,70)(0,0)
\put(65,20){\makebox(0,0){$\dots$}} \put(72,20){$\hookrightarrow$}
\put(97,20){\makebox(0,5){$E_{k+1}'$}}
\put(112,20){$\hookrightarrow$}
\put(132,20){\makebox(0,5){$E_{k}'$}}
\put(142,20){$\hookrightarrow$}
\put(162,20){\makebox(0,0){$\dots$}}
\put(170,20){$\hookrightarrow$}
\put(190,20){\makebox(0,5){$E_{2}'$}}
\put(200,20){$\hookrightarrow$}
\put(220,20){\makebox(0,5){$E_{1}'$}}
\put(230,20){$\hookrightarrow$}
\put(260,20){\makebox(0,5){$E_{0}'=E'$}}
\put(94,37){$\tau_{k+1}'$}\put(90,31){\vector(0,1){20}}
\put(133,37){$\tau_{k}'$}\put(130,31){\vector(0,1){20}}
\put(191,37){$\tau_{2}'$} \put(188,31){\vector(0,1){20}}
\put(221,37){$\tau_{1}'$} \put(218,31){\vector(0,1){20}}
\put(252,37){$\tau_0'=\tau'$} \put(248,31){\vector(0,1){20}}
\put(65,55){\makebox(0,0){$\dots$}} \put(72,55){$\hookrightarrow$}
\put(97,55){\makebox(0,5){$Q'_{k+1}$}}
\put(112,55){$\hookrightarrow$}
\put(132,55){\makebox(0,5){$Q'_{k}$}}
\put(142,55){$\hookrightarrow$}
\put(162,55){\makebox(0,0){$\dots$}}
\put(170,55){$\hookrightarrow$}
\put(190,55){\makebox(0,5){$Q'_{2}$}}
\put(200,55){$\hookrightarrow$}
\put(220,55){\makebox(0,5){$Q'_{1}$}}
\put(230,55){$\hookrightarrow$}
\put(260,55){\makebox(0,5){$Q'_{0}=Q'$}}
 \end{picture}

\vspace{-0.5cm}On the other hand, it is clear that the restriction
$\Omega_k$ (respectively, $\Omega_k'$) of $\Omega$ (respectively,
$\Omega'$) to the Lie subalgebroid $\tau_k:E_k\to Q_k$
(respectively, $\tau'_k:E'_k\to Q'_k$) is a presymplectic section
of $\tau_k:E_k\to Q_k$ (respectively, $\tau'_k:E'_k\to Q'_k$).
Moreover, if $\alpha_k$ (respectively, $\alpha_k'$) is the
restriction of $\alpha$ (respectively, $\alpha'$) to
$\tau_k:E_k\to Q_k$ (respectively, $\tau'_k:E'_k\to Q'_k$), we may
consider the dynamical problem
$$i_{X_k}\Omega_k=\alpha_k,\;\;X_k\in\Gamma(E_k),$$
(respectively,
$i_{X'_k}\Omega'_k=\alpha'_k,\;\;X'_k\in\Gamma(E'_k)$) on
$\tau_k:E_k\to Q_k$ (respectively, $\tau'_k:E'_k\to Q'_k$).

Now, suppose that the pair $(\Pi,\pi)$

\begin{picture}(375,90)(80,20)
\put(195,20){\makebox(0,0){$Q$}}
\put(250,25){$\pi$}\put(210,20){\vector(1,0){80}}
\put(310,20){\makebox(0,0){$Q'$}} \put(185,50){$\tau$}
\put(195,70){\vector(0,-1){40}} \put(320,50){$\tau'$}
\put(310,70){\vector(0,-1){40}} \put(195,80){\makebox(0,0){$E$}}
\put(250,85){$\Pi$}\put(210,80){\vector(1,0){80}}
\put(310,80){\makebox(0,0){$E'$}} \end{picture}

\vspace{0.4cm}\noindent is a \emph{dynamical Lie algebroid
epimorphism} between $E$ and $E'$. This means that:
\begin{enumerate}
\item The pair $(\Pi,\pi)$ is a Lie algebroid morphism,
\item $\pi:Q\to Q'$ is a surjective submersion and
$\Pi_{|E_x}:E_x\to E'_{\pi(x)}$ is a linear epimorphism, for all
$x\in Q$, and
\item $(\Pi,\pi)^*\Omega'=\Omega$ and $(\Pi,\pi)^*\alpha'=\alpha$.
\end{enumerate}

Then, we will see that the Lie subalgebroids in the two above
sequences are related by dynamical Lie algebroid epimorphisms.
First, we will prove the result for $k = 1$.

\begin{lemma}\label{lemma} If $(\Pi,\pi)$ is a dynamical Lie algebroid
epimorphism, we have that:
\begin{enumerate}
\item[a)] $\pi(Q_1)=Q_1'$ and $\Pi(E_1)=E_1'$.
\item[b)] If $x_1\in Q_1$, then $\pi^{-1}(\pi(x_1))\subseteq Q_1$
and $Ker(\Pi_{|E_{x_1}})\subseteq (E_1)_{x_1}$.
\item[c)] If $\pi_1:Q_1\to Q_1'$ and $\Pi_1:E_1\to E_1'$ are the
restrictions to $Q_1$ and $E_1$ of $\pi:Q\to Q'$ and $\Pi:E\to
E'$, respectively, then the pair $(\Pi_1,\pi_1)$ is a dynamical
Lie algebroid epimorphism.
\end{enumerate}
\end{lemma}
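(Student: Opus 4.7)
The plan is to establish the single identity $Q_1=\pi^{-1}(Q_1')$, from which nearly everything in (a) and (b) will follow, and then derive the tangent-level and Lie algebroid consequences by standard submersion arguments.

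The key technical step is to show that $\Pi$ maps $\ker\Omega(x)$ onto $\ker\Omega'(\pi(x))$ for every $x\in Q$. First, if $e\in\ker\Omega(x)$ and $f'\in E'_{\pi(x)}$, pick any fiber-lift $f\in E_x$ with $\Pi(f)=f'$ (possible since $\Pi|_{E_x}$ is a linear epimorphism). Then by condition (3) of a dynamical Lie algebroid epimorphism,
\[
\Omega'(\pi(x))(\Pi(e),f')=((\Pi,\pi)^*\Omega')(x)(e,f)=\Omega(x)(e,f)=0,
\]
so $\Pi(e)\in\ker\Omega'(\pi(x))$. Conversely, for $e'\in\ker\Omega'(\pi(x))$ any lift $\tilde e\in E_x$ with $\Pi(\tilde e)=e'$ automatically lies in $\ker\Omega(x)$ by the same identity. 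Combining these with the characterization (\ref{q1}) for both $Q_1$ and $Q_1'$, and using $((\Pi,\pi)^*\alpha')(x)(e)=\alpha'(\pi(x))(\Pi(e))$, we obtain $x\in Q_1 \iff \pi(x)\in Q_1'$, i.e.\ $Q_1=\pi^{-1}(Q_1')$. Since $\pi$ is surjective, this gives $\pi(Q_1)=Q_1'$, and if $x_1\in Q_1$ then $\pi^{-1}(\pi(x_1))\subseteq\pi^{-1}(Q_1')=Q_1$, completing the $Q_1$-part of (a) and the first statement of (b).

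For the $E_1$-part of (a), the inclusion $\Pi(E_1)\subseteq E_1'$ is immediate: if $e\in E_1$ then $\rho(e)\in TQ_1$, so $\rho'(\Pi(e))=T\pi(\rho(e))\in TQ_1'$ by the morphism property, hence $\Pi(e)\in E_1'$. For the reverse, fix $e'\in E_1'$ over $x'\in Q_1'$, choose $x\in Q_1$ with $\pi(x)=x'$ (using $\pi(Q_1)=Q_1'$), and take $\tilde e\in E_x$ with $\Pi(\tilde e)=e'$. Then $T\pi(\rho(\tilde e))=\rho'(e')\in T_{x'}Q_1'$; since $\pi$ is a submersion and $Q_1=\pi^{-1}(Q_1')$, we have $T_xQ_1=(T_x\pi)^{-1}(T_{x'}Q_1')$, so $\rho(\tilde e)\in T_xQ_1$ and $\tilde e\in E_1$. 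For the second statement in (b), if $e\in\ker(\Pi|_{E_{x_1}})$ then $T\pi(\rho(e))=\rho'(\Pi(e))=0$, hence $\rho(e)\in T_{x_1}(\pi^{-1}(\pi(x_1)))\subseteq T_{x_1}Q_1$ by the first statement of (b), so $e\in(E_1)_{x_1}$.

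Part (c) is then a matter of assembling these ingredients. The pair $(\Pi_1,\pi_1)$ is a Lie algebroid morphism because the defining pullback identity (\ref{Qorph}) restricts from $\tau:E\to Q$ and $\tau':E'\to Q'$ to the Lie subalgebroids $\tau_1:E_1\to Q_1$ and $\tau_1':E_1'\to Q_1'$. The map $\pi_1$ is a surjective submersion because it is the restriction of the surjective submersion $\pi$ to the preimage $\pi^{-1}(Q_1')=Q_1$. Fiber-wise surjectivity of $\Pi_1|_{(E_1)_x}\colon(E_1)_x\to(E_1')_{\pi(x)}$ follows from the lifting argument already used for $\Pi(E_1)=E_1'$. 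Finally, $(\Pi_1,\pi_1)^*\Omega_1'=\Omega_1$ and $(\Pi_1,\pi_1)^*\alpha_1'=\alpha_1$ hold by construction, as $\Omega_1,\alpha_1$ (resp.\ $\Omega_1',\alpha_1'$) are defined as the restrictions of $\Omega,\alpha$ (resp.\ $\Omega',\alpha'$). The only step that is not a routine manipulation is the kernel-surjectivity argument for $\Pi$, which is the crux of the lemma; once that is in hand, the remaining assertions reduce to standard facts about submersions and restrictions of vector bundle morphisms.
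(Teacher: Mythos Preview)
Your proof is correct and rests on the same key observation as the paper's, namely that $\Pi$ carries $\ker\Omega(x)$ onto $\ker\Omega'(\pi(x))$ (in fact you prove the slightly stronger $\ker\Omega(x)=\Pi^{-1}(\ker\Omega'(\pi(x)))$), from which the characterization (\ref{q1}) yields the relation between $Q_1$ and $Q_1'$.

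The one genuine difference is in packaging: you deduce the single identity $Q_1=\pi^{-1}(Q_1')$ and then invoke the standard fact that the restriction of a submersion to a saturated preimage is again a submersion, with $T_xQ_1=(T_x\pi)^{-1}(T_{\pi(x)}Q_1')$; this gives the submersion property of $\pi_1$ and the surjectivity $\Pi((E_1)_x)=(E_1')_{\pi(x)}$ in one stroke. The paper instead establishes $\pi(Q_1)=Q_1'$ and the fiber-saturation separately, then builds local sections of $\pi_1$ by hand to show it is a submersion, and finally uses that submersion property in a small ad hoc argument (produce $v_1\in T_{x_1}Q_1$ with the right projection and observe $v_1-\rho(e)$ is vertical) to obtain the $E_1$-surjectivity. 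Your route is shorter and more transparent; the paper's is more explicit but arrives at the same place by the same mechanism.
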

\begin{proof} If $x\in Q$ then, using that
$(\Pi,\pi)^*\Omega'=\Omega$ and the fact that $\Pi_{|E_x}:E_x\to
E'_{\pi(x)}$ is a linear epimorphism, we deduce that
\begin{equation}\label{ker}
\Pi(Ker\Omega(x))=Ker\Omega'(\pi(x)).
\end{equation}

Thus, from (\ref{q1}), (\ref{ker}) and since
$(\Pi,\pi)^*\alpha'=\alpha$, it follows that
$$\pi(Q_1)\subseteq Q_1'.$$

Conversely, if $x_1'\in Q_1'$ and $x\in \pi^{-1}(x_1')$ then,
using again (\ref{q1}), (\ref{ker}) and the fact that
$(\Pi,\pi)^*\alpha'=\alpha$, we obtain that $x\in Q_1$. This
proves that
$$Q_1'\subseteq\pi(Q_1)$$
and the following result
\begin{equation}\label{fiber}
x_1\in Q_1\Rightarrow\pi^{-1}(\pi(x_1))\subseteq Q_1.
\end{equation}

Now, we will see that $\pi_1:Q_1\to Q_1'$ is a submersion.

In fact, if $x_1\in Q_1$ and $x_1'=\pi_1(x_1)$ then there exist an
open subset $U'$ of $Q'$, $x_1'\in U'$, and an smooth local
section $s':U'\to Q$ of the submersion $\pi:Q\to Q'$ such that
$s'(x_1')=x_1$. Note that, using (\ref{fiber}), we conclude that
the restriction $s_1'$ of $s'$ to the open subset $U_1'=U'\cap
Q_1'$ of $Q_1'$ takes values in $Q_1$. Therefore,
$s_1':U_1'\subseteq Q_1'\to Q_1$ is a smooth map, $s_1'(x_1')=x_1$
and $s_1'\circ\pi_1=Id$. Consequently, $\pi_1:Q_1\to Q_1'$ is a
submersion.

Next, we will prove that
$$\Pi((E_1)_{x_1})=\Pi(\rho^{-1}(T_{x_1}Q_1))=(\rho')^{-1}(T_{\pi(x_1)}Q_1')=(E_1')_{\pi(x_1)},\mbox{ for }x_1\in
Q_1.$$

Since
\begin{equation}\label{rhoE1}
(\rho'\circ\Pi)_{|E_{x_1}}=(T\pi\circ\rho)_{|E_{x_1}},
\end{equation}
it follows that
$$\Pi((E_1)_{x_1})\subseteq(E_1')_{\pi(x_1)}.$$

Conversely, suppose that $e_1'\in
(E_1')_{\pi(x_1)}=(\rho')^{-1}(T_{\pi(x_1)}Q_1')$. Then, we can
choose $e\in E_{x_1}$ such that $\Pi(e)=e_1'$. Thus, from
(\ref{rhoE1}), we have that
$$(T\pi)(\rho(e))\in T_{\pi(x_1)}Q_1'.$$

Now, using that $\pi_1:Q_1\to Q_1'$ is a submersion, we deduce
that there exists $v_1\in T_{x_1}Q_1$ such that
$$(T\pi)(v_1)=(T\pi)(\rho(e)),$$
that is,
$$v_1-\rho(e)\in T_{x_1}(\pi^{-1}(\pi(x_1)))\subseteq
T_{x_1}Q_1.$$

Therefore, $\rho(e)\in T_{x_1}Q_1$ and
$$e\in \rho^{-1}(T_{x_1}Q_1)=(E_1)_{x_1}.$$

On the other hand, if $e\in Ker(\Pi_{|E_{x_1}})$, then
$$0=\Pi(e)\in(E_1')_{\pi(x_1)},$$
and, proceeding as above, we conclude that $e\in(E_1)_{x_1}$.

Finally, using that the pair $(\Pi,\pi)$ is a dynamical Lie
algebroid epimorphism, we obtain that the pair $(\Pi_1,\pi_1)$ is
also a dynamical Lie algebroid epimorphism.

\end{proof}

Next, we will prove the following theorem.

\begin{theorem}\label{Th} Let $(\Pi,\pi)$ be a dynamical Lie algebroid
epimorphism between $E$ and $E'$. Then, we have that:
\begin{enumerate}
\item $\pi(Q_k)=Q_k'$ and $\Pi(E_k)=E_k'$, for all $k$.
\item If $x_k\in Q_k$, then $\pi^{-1}(\pi(x_k))\subseteq Q_k$ and
$Ker(\Pi_{|E_{x_k}})\subseteq (E_k)_{x_k}$, for all $k$.
\item If $\pi_k:Q_k\to Q_k'$ and $\Pi_k:E_k\to E_k'$ are the
restrictions to $Q_k$ and $E_k$ of $\pi:Q\to Q'$ and $\Pi:E\to
E'$, respectively, then the pair $(\Pi_k,\pi_k)$ is a dynamical
Lie algebroid epimorphism, for all $k$.
\end{enumerate}
\end{theorem}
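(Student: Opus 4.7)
The plan is to argue by induction on $k$, with the base case $k = 1$ supplied by Lemma~\ref{lemma} (the case $k = 0$ being trivial from the hypotheses on $(\Pi,\pi)$). Assuming properties (1)--(3) hold at level $k$, I would establish them at level $k+1$.

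The linchpin of the induction is the following linear-algebraic fact: for every $x \in Q_k$,
\[
\Pi\bigl((E_k)_x^\perp\bigr) = (E_k')_{\pi(x)}^{\perp'},
\]
where $\perp$ and $\perp'$ are taken with respect to $\Omega$ in $E$ and $\Omega'$ in $E'$, respectively. I would prove this by a direct computation using the induction hypothesis $\Pi((E_k)_x) = (E_k')_{\pi(x)}$, the surjectivity of $\Pi|_{E_x}$, and the compatibility $(\Pi,\pi)^*\Omega' = \Omega$: the inclusion $\subseteq$ is immediate, and for $\supseteq$ one lifts an element of $(E_k')_{\pi(x)}^{\perp'}$ through $\Pi|_{E_x}$ and verifies that any lift is automatically orthogonal to $(E_k)_x$ because every $f \in (E_k)_x$ is mapped by $\Pi$ into $(E_k')_{\pi(x)}$.

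Granting this identification, property (1) is handled as follows. For $\pi(Q_{k+1}) \subseteq Q_{k+1}'$, any $e' \in (E_k')_{\pi(x)}^{\perp'}$ is of the form $\Pi(e)$ for some $e \in (E_k)_x^\perp$, whence $\alpha'(\pi(x))(e') = \alpha(x)(e) = 0$ by $(\Pi,\pi)^*\alpha' = \alpha$ and the defining condition~(\ref{qkmas1}) at $x$. For the reverse inclusion, given $x' \in Q_{k+1}' \subseteq Q_k'$, use the induction hypothesis to lift $x'$ to some $x \in Q_k$ and run the same computation in reverse to conclude $x \in Q_{k+1}$. That argument also yields property (2): every $y$ in the fiber $\pi^{-1}(\pi(x))$ of a point $x \in Q_{k+1}$ lies in $Q_k$ by the induction hypothesis and satisfies the defining condition because $\Pi((E_k)_y^\perp)$ depends only on $\pi(y)$. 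For the kernel statement, elements of $\ker(\Pi|_{E_x})$ lie in $(E_k)_x$ by induction, and their $\rho$-image lies in $\ker(T_x\pi) \subseteq T_x(\pi^{-1}(\pi(x))) \subseteq T_x Q_{k+1}$ by the fiber containment just established, so they belong to $(E_{k+1})_x = \rho^{-1}(T_xQ_{k+1})$.

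For property (3), the submersion property of $\pi_{k+1}: Q_{k+1} \to Q_{k+1}'$ follows by the local-section argument in Lemma~\ref{lemma}: a smooth local section of $\pi$ through $x$ automatically takes values in $Q_{k+1}$ once restricted to $Q_{k+1}'$, thanks to the fiber containment in (2). The surjectivity $\Pi(E_{k+1}) = E_{k+1}'$ is then obtained by lifting: given $e' \in (E_{k+1}')_{\pi(x)}$, pick any preimage $e \in (E_k)_x$, use the submersion $\pi_{k+1}$ to find $v \in T_x Q_{k+1}$ with $(T\pi)(v) = \rho'(e')$, and observe that $v - \rho(e) \in \ker(T_x\pi) \subseteq T_x Q_{k+1}$, forcing $\rho(e) \in T_x Q_{k+1}$. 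The remaining structure (Lie algebroid morphism, pullback of $\Omega'$ and $\alpha'$) is inherited by restriction from level $k$. The subtlest point is the $\perp$ identification: the defining condition~(\ref{qkmas1}) uses the orthogonal inside $E$ rather than inside $E_k$, so one must verify the linear-algebraic claim in the ambient bundle and not in the restriction; once this is done, the rest is a bookkeeping exercise formally parallel to the proof of Lemma~\ref{lemma}.
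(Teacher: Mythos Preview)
Your proposal is correct and follows essentially the same approach as the paper's own proof: induction on $k$, with the key inductive observation being the identity $\Pi\bigl((E_k)_x^\perp\bigr) = (E_k')_{\pi(x)}^{\perp'}$ (equation~(\ref{ortho}) in the paper), after which one repeats the argument of Lemma~\ref{lemma} verbatim. In fact the paper's proof is much terser than yours---it establishes~(\ref{ortho}) and then simply writes ``proceeding as in the proof of Lemma~\ref{lemma}, we deduce the result''---so your explicit treatment of each part, and in particular your remark that the orthogonal complements must be taken in the ambient bundle $E$ rather than in $E_k$, makes the argument more transparent.
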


\begin{proof} The result holds for $k=0,1$. Then, we will proceed
by induction.

Assume that the result holds for $k\in\{0,1,\dots,N\}$. Then, we
will prove it for $k=N+1$.

Note that if $k\in\{0,1,\dots,N\}$ and $x_k\in Q_k$ then, using
the following facts
$$(\Pi,\pi)^*\Omega'=\Omega,\;\;\Pi((E_k)_{x_k})=(E_k')_{\pi(x_k)}\mbox{ and }\Pi(E_{x_k})=E_{\pi(x_k)}',$$
we obtain that
\begin{equation}\label{ortho}
\Pi((E_k)_{x_k}^\perp)=(E_k')_{\pi(x_k)}^\perp. \end{equation}
Thus, proceeding as in the proof of Lemma \ref{lemma}, we deduce
the result.

\end{proof}

We remark that the behavior of the two constraint algorithms is
the same. In fact, if we obtain a final Lie subalgebroid
$\tau_f=\tau_k:E_f=E_k\to Q_f=Q_k$ for the first problem (that is,
if $Q_k=Q_{k+1}$) then, from Theorem \ref{Th}, it follows that
$Q_k'=Q_{k+1}'$ and we have a final Lie subalgebroid
$\tau_f'=\tau'_k:E'_f=E'_k\to Q'_f=Q'_k$ for the second problem.
Conversely, if the second constraint algorithm stops at a certain
$k$ (that is,  $Q_k'=Q_{k+1}'$) then, using (\ref{qkmas1}) and
(\ref{ortho}), we deduce that $Q_k=Q_{k+1}$, i.e., the first
constraint algorithm also stops at the level $k$.

Now, suppose that $X:Q_k\to E_k$ is a section of the Lie algebroid
$\tau_k:E_k\to Q_k$ such that
$$i_X\Omega_{|Q_k}=\alpha_{|Q_k}$$
and $X$ is $(\Pi_k,\pi_k)$-projectable, i.e., there exists
$X'\in\Gamma(E_k')$ satisfying
$$X'\circ\pi_k=\Pi_k\circ X.$$

Then, using that $(\Pi,\pi)^*\Omega'=\Omega$ and that
$(\Pi,\pi)^*\alpha'=\alpha$, we obtain that
$$i_{X'}\Omega'_{|Q_k'}=\alpha'_{|Q_k'}.$$

In others words, $X'$ is a solution (along $Q_k'$) of the second
dynamical problem.

Conversely, if $X'\in\Gamma(E_k')$ is a solution of the dynamical
equation
$$i_{X'}\Omega'_{|Q_k'}=\alpha'_{|Q_k'},$$
then we can choose $X\in\Gamma(E_k)$ such that
$$X'\circ\pi_k=\Pi_k\circ X$$
and, since $(\Pi,\pi)^*\Omega'=\Omega$ and
$(\Pi,\pi)^*\alpha'=\alpha$, we conclude that
$$i_X\Omega_{|Q_k}=\alpha_{|Q_k}.$$

\section{Singular Lagrangian systems on Lie algebroids}\label{Sec:singular}
Let $L:E\to\R$ be a Lagrangian function on a Lie algebroid
$\tau:E\to Q$.

Denote by $\omega_L$ and $E_L$ the Cartan 2-section and the
Lagrangian energy, respectively, associated with $L$. Then,
$\omega_L$ is not, in general, a symplectic section and, thus, the
dynamical equation
$$i_X\omega_L=d^{{\mathcal T}^EE}E_L$$
has not, in general, solution. Moreover, if there exists a
solution of the above equation, it is not, in general, a second
order differential equation and it is not, in general, unique.

Note that the Legendre transformation $leg_L:E\to E^*$ associated
with $L$ is not, in general, a local diffeomorphism.

\begin{definition} The Lagrangian function $L$ is said to be
\emph{almost regular} if the following conditions hold:
\begin{itemize}
\item[i)] The subset $M_1=leg_L(E)$ of $E^*$ is an embedded
submanifold of $E^*$.
\item[ii)] The map $leg_1:E\to M_1$ induced by the Legendre
transformation is a submersion with connected fibres.
\end{itemize}\end{definition}

In what follows, we will assume that $L$ is an almost regular
Lagrangian.

Then, we may prove the following result.

\begin{proposition}\label{prop2}
The Lagrangian energy $E_L$ is a basic function with respect to
the submersion $leg_1:E\to M_1$, that is, there exists a
Hamiltonian function $H$ on $M_1$ such that
$$H\circ leg_1=E_L.$$
\end{proposition}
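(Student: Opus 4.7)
The plan is to show that $E_L$ is constant on the fibres of $leg_1\colon E\to M_1$; since these fibres are connected by hypothesis and $leg_1$ is a surjective submersion, it will then be a standard fact that $E_L$ descends to a (smooth) function $H$ on $M_1$ with $H\circ leg_1=E_L$. So the essential content is showing that $dE_L$ vanishes on the vertical distribution $\ker T leg_1$.

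First I would work in local fibred coordinates $(x^i,y^A)$ on $E$ and use the formula $leg_L(x^i,y^A)=(x^i,\partial L/\partial y^A)$ from (\ref{leg-L}). A tangent vector $v=a^i\partial/\partial x^i+b^B\partial/\partial y^B\in T_eE$ lies in $\ker T_e leg_L$ exactly when $a^i=0$ and $W_{AB}b^B=0$, where $W_{AB}=\partial^2 L/(\partial y^A\partial y^B)$. Since $leg_1$ has the same tangent map as $leg_L$ (it only changes codomain), $\ker T_e leg_1$ is described by the same conditions.

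Next, from (\ref{EL}) we have $E_L=y^A(\partial L/\partial y^A)-L$, so a direct computation gives
\[
\frac{\partial E_L}{\partial y^B}=y^A W_{AB},\qquad \frac{\partial E_L}{\partial x^i}=y^A\frac{\partial^2 L}{\partial x^i\partial y^A}-\frac{\partial L}{\partial x^i}.
\]
Pairing with a vertical vector $v\in\ker T leg_1$ (so $a^i=0$ and $W_{AB}b^B=0$) yields
\[
v(E_L)=b^B y^A W_{AB}=y^A(W_{AB}b^B)=0,
\]
which shows $dE_L$ annihilates $\ker T leg_1$ at every point of $E$.

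Finally, I would assemble the pieces: fibre-connectedness plus vanishing of $dE_L$ on $\ker T leg_1$ implies $E_L$ is constant on each (connected) fibre of $leg_1$. Because $leg_1\colon E\to M_1$ is a surjective submersion, the induced set-theoretic function $H\colon M_1\to\R$, $H(leg_1(e)):=E_L(e)$, is well defined and smooth (local sections of $leg_1$ exhibit $H$ as a composition of smooth maps). The hypothesis of connected fibres is the key ingredient here: without it, vanishing of $dE_L$ on the vertical distribution would only give locally constant behaviour on fibres. I do not anticipate a serious obstacle; the only potentially delicate point is verifying the coordinate description of $\ker T leg_1$, which is immediate from the explicit form of $leg_L$.
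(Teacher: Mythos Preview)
Your proposal is correct and follows essentially the same approach as the paper: both show that $dE_L$ annihilates $\ker T\,leg_1$ via the local coordinate computation using $W_{AB}$, and then invoke the standard descent argument for a function constant on the connected fibres of a surjective submersion. The paper's proof is terser (it stops after verifying $X(E_L)=0$ for vertical $X$), whereas you spell out explicitly the role of fibre-connectedness and the smoothness of $H$, but the substance is identical.
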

\begin{proof}
Suppose that $e$ is a point of $E$ and that $(x^i,y^A)$ are fibred
local coordinates in an open subset of $E$ which contains to $e$.
Then, using (\ref{leg-L}), we deduce that
$X=\lambda^A\frac{\partial}{\partial y^A}_{|e}\in T_eE$ is
vertical with respect to the submersion $leg_1:E\to M_1$ if and
only if
$$\lambda^A\displaystyle\frac{\partial^2L}{\partial y^A\partial
y^B}_{|e}=0,\mbox{ for all }B.$$ Thus, if $X$ is vertical, from
(\ref{EL}), it follows that
$$X(E_L)=\lambda^A\displaystyle\frac{\partial L}{\partial
y^A}_{|e}+\lambda^A y^B(e)\frac{\partial^2L}{\partial y^A\partial
y^B}_{|e}-\lambda^A\frac{\partial L}{\partial y^A}_{|e}=0.$$ This
ends the proof of the result.
\end{proof}

Now, since $\tau^*_{|M_1}:M_1\to Q$ is a fibration, one may
consider the prolongation ${\mathcal T}^EM_1$ of the Lie algebroid
$\tau:E\to Q$ over $\tau^*_{|M_1}$ or, in other words, the
$E$-tangent bundle to $M_1$.

Then, the canonical symplectic section $\Omega_E$ on ${\mathcal
T}^EE^*\to E^*$ induces a presymplectic section $\Omega_1$ on the
Lie algebroid ${\mathcal T}^EM_1\to M_1$. Moreover, the submersion
$leg_1:E\to M_1$ induces, in a natural way, a Lie algebroid
epimorphism
$${\mathcal T}\,leg_1:{\mathcal T}^EE\to{\mathcal T}^EM_1$$
(over $leg_1$). In addition, using (\ref{pullbacks}) and
Proposition \ref{prop2}, we obtain that
$$({\mathcal T}\,leg_1,leg_1)^*(\Omega_1)=\omega_L,\;\;\;({\mathcal
T}\,leg_1,leg_1)^*(d^{{\mathcal T}^EE}E_L)=d^{{\mathcal
T}^EM_1}H.$$ Therefore, we have proved the following result.

\begin{proposition}\label{prop3}
The pair $({\mathcal T}\,leg_1,leg_1)$ is a dynamical Lie
algebroid epimorphism between the presymplectic Lie algebroids
$({\mathcal T}^EE,\omega_L,d^{{\mathcal T}^EE}E_L)$ and
$({\mathcal T}^EM_1,\Omega_1,$ $d^{{\mathcal T}^EM_1}H)$.
\end{proposition}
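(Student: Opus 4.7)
The plan is to verify the three defining conditions of a dynamical Lie algebroid epimorphism from Section \ref{sec3.2}. The presymplectic data on the target $({\mathcal T}^E M_1, \Omega_1, d^{{\mathcal T}^E M_1}H)$ have been introduced immediately before the proposition, and the pullback identities are already stated there; the task is really to package these together with the morphism/epimorphism properties of the pair $({\mathcal T}\,leg_1, leg_1)$.

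First, I would dispose of the submersion condition on the base. The almost regularity hypothesis says exactly that $leg_1 : E \to M_1$ is a surjective submersion (with connected fibres). For the fibrewise linearity, I would show that $({\mathcal T}\,leg_1)_e : {\mathcal T}^E_e E \to {\mathcal T}^E_{leg_1(e)} M_1$ is a linear epimorphism. Given $(b,w) \in {\mathcal T}^E_{leg_1(e)} M_1$, the relation $\tau^*|_{M_1}\circ leg_1 = \tau$ yields $T\tau = T(\tau^*|_{M_1})\circ T leg_1$, and since $leg_1$ is a submersion I can pick $v \in T_e E$ with $T_e leg_1(v) = w$. Then $T_e\tau(v) = T(\tau^*|_{M_1})(w) = \rho(b)$, so $(b,v) \in {\mathcal T}^E_e E$ and projects to $(b,w)$, giving surjectivity; linearity is clear from the definition.

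Second, I would argue that $({\mathcal T}\,leg_1, leg_1)$ is a Lie algebroid morphism. The most economical route is to use the factorisation $leg_L = i\circ leg_1$ with $i : M_1 \hookrightarrow E^*$ the inclusion, which induces ${\mathcal T}\,leg_L = {\mathcal T}i \circ {\mathcal T}\,leg_1$. Since $({\mathcal T}\,leg_L, leg_L)$ is a Lie algebroid morphism (recalled in the text) and $({\mathcal T} i, i)$ is the injective Lie algebroid morphism associated to a Lie subalgebroid inclusion, the morphism property for $({\mathcal T}\,leg_1, leg_1)$ follows by restricting the pullback equation (\ref{Qorph}) and using that ${\mathcal T}i$ is fibrewise injective with image equal to ${\mathcal T}^E M_1$ inside ${\mathcal T}^E E^*|_{M_1}$.

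Finally, for the pullback conditions $({\mathcal T}\,leg_1, leg_1)^*\Omega_1 = \omega_L$ and $({\mathcal T}\,leg_1, leg_1)^* d^{{\mathcal T}^E M_1}H = d^{{\mathcal T}^E E}E_L$, I would combine (\ref{pullbacks}) with the fact that $\Omega_1$ is by construction the restriction of $\Omega_E$ to ${\mathcal T}^E M_1$, so that $({\mathcal T}\,leg_L, leg_L)^*\Omega_E = \omega_L$ descends via the factorisation to give the first identity. For the second, Proposition \ref{prop2} provides $H \circ leg_1 = E_L$, and applying $d^{{\mathcal T}^E E}$ together with compatibility of $d$ with Lie algebroid morphisms yields the result. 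The main obstacle — if any — is the careful set-theoretic bookkeeping that ensures $T leg_1$ actually lands in $T M_1$ and that the prolongation ${\mathcal T}\,leg_1$ is well defined as a map into ${\mathcal T}^E M_1$ rather than merely into ${\mathcal T}^E E^*|_{M_1}$; once this is in place, everything else is an immediate consequence of previously established properties.
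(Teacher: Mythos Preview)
Your proposal is correct and follows essentially the same route as the paper: the paper's argument is the paragraph immediately preceding the proposition, which simply notes that $leg_1$ is a submersion (almost regularity), that this induces the Lie algebroid epimorphism ${\mathcal T}\,leg_1$, and then invokes (\ref{pullbacks}) together with Proposition \ref{prop2} to obtain the two pullback identities. You have supplied exactly the details the paper leaves implicit (the fibrewise surjectivity of ${\mathcal T}\,leg_1$ and the factorisation through ${\mathcal T}i$), so there is no substantive difference in approach.
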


The following diagram illustrates the above situation.

\begin{picture}(130,150)(-100,0)
\put(5,75){\makebox(0,0){$E$}}
\put(50,78){$leg_1$}\put(15,75){\vector(1,0){90}}
\put(118,75){\makebox(0,0){$M_1$}}
\put(118,125){\vector(0,-1){40}} \put(5,125){\vector(0,-1){40}}
\put(5,135){\makebox(0,0){${\mathcal T}^EE$}}
\put(48,138){${\mathcal T}\,leg_1$}\put(20,135){\vector(1,0){85}}
\put(123,135){\makebox(0,0){${\mathcal T}^EM_1$}}
\put(7,65){\vector(1,-1){40}} \put(112,65){\vector(-1,-1){40}}
\put(60,20){\makebox(0,0){$Q$}} \put(20,42){\makebox(0,0){$\tau$}}
\put(105,42){\makebox(0,0){$\tau^*_{|M_1}$}}

\end{picture}

\vspace{-0.35cm}Now, we consider the following dynamical equations
\begin{equation}\label{firsteq}
i_X\omega_L=d^{{\mathcal T}^EE}E_L,\;\;\mbox{ with
}X\in\Gamma({\mathcal T}^EE)
\end{equation}
and
\begin{equation}\label{secondeq}
i_Y\Omega_1=d^{{\mathcal T}^EM_1}H,\;\;\mbox{ with
}Y\in\Gamma({\mathcal T}^EM_1).
\end{equation}

In general, a section $X\in\Gamma({\mathcal T}^EE)$ (respectively,
$Y\in\Gamma({\mathcal T}^EM_1)$) satisfying (\ref{firsteq})
(respectively, (\ref{secondeq})) can not be found in all the
points of $E$ (respectively, $M_1$). Thus, we must apply the
general constraint algorithm developed in Section \ref{sec3.1} for
an arbitrary presymplectic system.

Assume that this algorithm stops at the level $k$ for the first
dynamical equation, that is, there exists a Lie subalgebroid
$({\mathcal T}^EE)_k$ of ${\mathcal T}^EE$ over a submanifold
$E_k$ of $E$ and a section $X_k\in\Gamma(({\mathcal T}^EE)_k)$
such that
$$(i_{X_k}\omega_L)_{|E_k}=(d^{{\mathcal T}^EE}E_L)_{|E_k}.$$

Note that $({\mathcal T}^EE)_k=(\rho^\tau)^{-1}(TE_k)$, where
$\rho^\tau:{\mathcal T}^EE\to TE$ is the anchor map of the Lie
algebroid ${\mathcal T}^EE\to E$. Moreover, using Proposition
\ref{prop3} and the results of Section \ref{sec3.2}, we deduce
that the constraint algorithm also stops at the level $k$ for the
second equation. In fact, we have that:

\begin{enumerate}
\item $leg_1(E_k)=M_{k+1}$ is a submanifold of $M_1$ and
$$({\mathcal T}^EM_1)_{k}=({\mathcal T}\,leg_1)(({\mathcal
T}^EE)_k)=(\rho^{\tau^*})^{-1}(TM_{k+1})$$ is a Lie algebroid over
$M_{k+1}$, $\rho^{\tau^*}:{\mathcal T}^EE^*\to TE^*$ being the
anchor map of the Lie algebroid ${\mathcal T}^EE^*\to E^*$.
\item If $e_k\in E_k$ then $leg_1^{-1}(leg_1(e_k))\subseteq
E_k$ and $Ker({\mathcal T}_{e_k}leg_1)\subseteq({\mathcal
T}^E_{e_k}E)_k$. Note that, from (\ref{omegaL}) and (\ref{leg-L}),
it follows that
\begin{equation}\label{nucleodeleg}
Ker({\mathcal T}_{e_k}leg_1)=Ker\omega_L(e_k)\cap({\mathcal
T}^E_{e_k}E)^V.
\end{equation}
\item If $leg_{k+1}:E_k\to M_{k+1}$ and
$Leg_{k+1}:({\mathcal T}^EE)_k\to({\mathcal T}^EM_1)_{k}$ are the
restrictions to $E_k$ and $({\mathcal T}^EE)_k$ of $leg_1:E=E_0\to
M_1$ and ${\mathcal T}\,leg_1:{\mathcal T}^EE\to{\mathcal
T}^EM_1$, respectively, then the pair $(Leg_{k+1},leg_{k+1})$ is a
dynamical Lie algebroid epimorphism.
\item If $X_k\in\Gamma(({\mathcal T}^EE)_k)$ is such that
$(i_{X_k}\omega_L)_{|E_k}=(d^{{\mathcal T}^EE}E_L)_{|E_k}$ and
$X_k$ is $(Leg_{k+1},leg_{k+1})$-projectable, i.e., there exists
$Y_{k}\in$ $\Gamma(({\mathcal T}^EM_1)_{k})$ sa\-tis\-fying
\[
Y_{k}\circ leg_{k+1}=Leg_{k+1}\circ X_k,
\]
then
\[
(i_{Y_{k}}\Omega_1)_{|M_{k+1}}=(d^{{\mathcal
T}^EM_1}H)_{|M_{k+1}}.
\]
\item If $Y_{k}\in\Gamma(({\mathcal T}^EM_1)_{k})$ is a
solution of the dynamical equation
\[
(i_{Y_{k}}\Omega_1)_{|M_{k+1}}=(d^{{\mathcal T}^EM_1}H)_{|M_{k+1}}
\]
then we can choose $X_k\in\Gamma(({\mathcal T}^EE)_k)$ such that
\[
Y_{k}\circ leg_{k+1}=Leg_{k+1}\circ X_k\mbox{ and
}(i_{X_k}\omega_L)_{|E_k}=(d^{{\mathcal T}^EE}E_L)_{|E_k}.
\]
\end{enumerate}

Now, suppose that $X\in\Gamma(({\mathcal T}^EE)_k)$ is a solution
of the dynamical equation
$$(i_{X}\omega_L)_{|E_k}=(d^{{\mathcal T}^EE}E_L)_{|E_k}$$
and that $X$ is $(Leg_{k+1},leg_{k+1})$-projectable over
$Y\in\Gamma(({\mathcal T}^EM_1)_{k})$.

Then,
\begin{equation}\label{HamilY}
(i_{Y}\Omega_1)_{|M_{k+1}}=(d^{{\mathcal T}^EM_1}H)_{|M_{k+1}}.
\end{equation}

The following diagram illustrates the above situation.

\begin{picture}(375,150)(-50,0)
\put(5,5){\makebox(0,0){${\mathcal T}^EM_1$}}
\put(22,5){\vector(1,0){200}} \put(235,5){\makebox(0,0){$M_1$}}
\put(5,125){\vector(0,-1){110}}
\put(-30,75){\makebox(0,0){${\mathcal T}\,leg_1=Leg_1$}}
\put(5,135){\makebox(0,0){${\mathcal T}^EE$}}
\put(22,135){\vector(1,0){200}}
\put(243,135){\makebox(0,0){$E=E_0$}}
\put(235,125){\vector(0,-1){110}}
\put(250,75){\makebox(0,0){$leg_1$}}

\put(70,45){\makebox(0,0){$({\mathcal T}^EM_1)_{k}$}}
\put(60,38){\vector(-2,-1){48}} \put(95,42){\vector(1,0){60}}
\put(155,48){\vector(-1,0){60}} \put(130,52){\makebox(0,0){$Y$}}
\put(172,45){\makebox(0,0){$M_{k+1}$}}
\put(175,38){\vector(2,-1){48}}

\put(64,108){\vector(-2,1){46}}
\put(74,100){\makebox(0,0){$({\mathcal T}^EE)_{k}$}}
\put(95,97){\vector(1,0){60}} \put(155,103){\vector(-1,0){60}}
\put(130,107){\makebox(0,0){$X$}}
\put(168,100){\makebox(0,0){$E_{k}$}}
\put(175,105){\vector(2,1){48}}

\put(70,90){\vector(0,-1){35}} \put(166,90){\vector(0,-1){35}}
\put(52,75){\makebox(0,0){$Leg_{k+1}$}}
\put(185,75){\makebox(0,0){$leg_{k+1}$}}
\end{picture}

\vspace{0.5cm} If $\rho^\tau_k:({\mathcal T}^EE)_k\to TE_k$ is the
anchor map of the Lie algebroid $({\mathcal T}^EE)_k\to E_k$ then
the integral curves of the vector field $\rho^\tau_k(X)$ don't
satisfy, in general, the Euler-Lagrange equations for $L$. The
reason is that the section $X$ is not, in general, a SODE along
the submanifold $E_k$ of $E$. In other words, $X$ doesn't satisfy,
in general, the equation
$$SX=\Delta_{|E_k}.$$

A solution for the above problem is given in the following
theorems.

\begin{theorem}\label{th4.4}(i) The subset $S^X$ of $E_k$
defined by
$$S^X=\{e\in E_k/(SX)(e)=\Delta(e)\}$$
is a submanifold of $E_k$.

(ii) There exists a Lie subalgebroid $A^X$ of $({\mathcal
T}^EE)_k\to E_k$ (over $S^X$) such that if
$Leg_{A^X}:A^X\to({\mathcal T}^EM_1)_{k}$ and $leg_{S^X}:S^X\to
M_{k+1}$ are the restrictions of $Leg_{k+1}:({\mathcal
T}^EE)_k\to({\mathcal T}^EM_1)_{k}$ and $leg_{k+1}:E_k\to M_{k+1}$
to $A^X$ and $S^X$, respectively, then the pair
$(Leg_{A^X},leg_{S^X})$ is a Lie algebroid isomorphism.

\end{theorem}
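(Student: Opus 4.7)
The strategy is to prove (i) by exhibiting $S^X$ as the zero locus of a section of a vector bundle which is transverse to the zero section, and to prove (ii) by taking for $A^X$ the canonical Lie subalgebroid of $(\mathcal{T}^E E)_k$ associated with the submanifold $S^X$ and identifying it via $Leg_{k+1}$ with the Hamiltonian-side object $(\mathcal{T}^E M_1)_k|_{M_{k+1}}$.

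First I would extract the canonical data. In a local basis write $X(e) = z^A(e)\,\mathcal{X}_A + v^A(e)\,\mathcal{V}_A$. By (\ref{nucleodeleg}), $\ker(\mathcal{T}_e leg_1) = \ker\omega_L(e) \cap (\mathcal{T}^E_e E)^V$ is vertical, so two solutions of the dynamical equation projecting to the same $Y$ agree in their first-projection coefficients; hence $z^A$ descends to a smooth function on $M_{k+1}$ (using almost regularity: $leg_{k+1}$ is a surjective submersion with connected fibres), so $z^A(e)$ depends on $e$ only through $leg_{k+1}(e)$. Contracting the dynamical equation with $\mathcal{V}_B$ and using (\ref{omegaL})--(\ref{EL}) yields $(z^A - y^A)\,\partial^2 L/\partial y^A\partial y^B = 0$ on $E_k$, so the defect section $F := SX - \Delta = (z^A - y^A)\,\mathcal{V}_A$ takes its values not in the full vertical subbundle $(\mathcal{T}^E E)^V|_{E_k}$ but in the subbundle $(\ker W)^V$ of constant rank $k_0 := n - \mathrm{rk}\,W$ (by almost regularity).

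For (i): $S^X = F^{-1}(0)$. In a direction $v \in (\ker W)^V_e$ (equivalently $v \in \ker T_e leg_{k+1} \subset T_e E_k$), the derivative $T_e F(v)$ has no contribution from $z^A$ (because $z^A$ factors through $leg_{k+1}$) and contributes $-v$ from $y^A$. Hence $T_e F$ restricted to $(\ker W)^V_e$ is minus the identity on the target fibre, so $F$ is transverse to the zero section and $S^X$ is an embedded submanifold of $E_k$ of codimension $k_0$, satisfying $\dim S^X = \dim M_{k+1}$. Two points of $S^X$ over the same $m \in M_{k+1}$ both satisfy $y^A = z^A(m)$ and hence coincide; combined with the dimension count, this shows that $leg_{S^X}: S^X \to M_{k+1}$ is a bijective submersion, hence a diffeomorphism, proving (i). Its inverse $\sigma^X : M_{k+1} \to S^X$ is a smooth section of $leg_{k+1}$.

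For (ii), define $A^X = (\mathcal{T}^E E)_k|_{S^X} \cap (\rho^\tau)^{-1}(TS^X)$, the natural Lie subalgebroid of $(\mathcal{T}^E E)_k$ over $S^X$. The splitting $T_e E_k = T_e S^X \oplus \ker T_e leg_{k+1}$ induced by $\sigma^X$ at $e \in S^X$ gives $T_e S^X \cap \ker T_e leg_1 = 0$; by (\ref{nucleodeleg}) this forces $\ker Leg_{k+1} \cap A^X = 0$, and a direct rank count then shows that $Leg_{A^X}$ is a vector bundle isomorphism over the diffeomorphism $leg_{S^X}$. For bracket compatibility, two sections of $A^X$ extend, via a local basis of projectable sections, to $(Leg_{k+1}, leg_{k+1})$-projectable sections of $(\mathcal{T}^E E)_k$ near $S^X$ whose anchors are tangent to $S^X$ in a tubular neighbourhood; by the Lie algebroid morphism property of $(Leg_{k+1}, leg_{k+1})$ (Proposition \ref{prop3}) their bracket projects to the bracket on $(\mathcal{T}^E M_1)_k$, and its anchor (as the bracket of two vector fields tangent to $S^X$) stays tangent to $S^X$, so it lies in $A^X$. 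This upgrades $(Leg_{A^X}, leg_{S^X})$ to a Lie algebroid isomorphism, completing (ii). The main technical obstacle is the structural input to the transversality argument in (i): verifying that $z^A$ descends smoothly to $M_{k+1}$ and that $F$ lands in $(\ker W)^V$ is where both almost regularity and the full dynamical equation on $E_k$ are essentially used.
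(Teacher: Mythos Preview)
Your transversality argument for (i) is correct and is a tidy alternative to the paper's approach. The paper instead constructs the explicit map $W_X:E_k\to E$, $W_X(e)=pr_1(X(e))$, shows that the vector field $X^*=\rho^\tau(SX-\Delta|_{E_k})$ is $leg_{k+1}$-vertical, and integrates it: along a fibre the coefficients $X^A$ (your $z^A$) are constant, so the integral curve through $e$ converges to $(x_0^i,X^A)=W_X(e)$, which therefore lies in the same fibre and is its unique point in $S^X$. Thus $W_X$ factors through $leg_{k+1}$ to a smooth section $\widetilde{W}_X:M_{k+1}\to E_k$ with image $S^X$. Your route reaches the submanifold property more cleanly, without any ODE.

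There is, however, a real gap in your passage to (ii): you assert that $leg_{S^X}$ is \emph{bijective}, but you have only shown it is injective and (from the transversality) a local diffeomorphism. Injectivity plus equal dimensions does not give surjectivity. What is missing is precisely what the paper's flow argument delivers: for each $m\in M_{k+1}$ the candidate point $(x^i(m),z^A(m))$ actually lies in $E_k$ (as the limit of an integral curve contained in the fibre $leg_{k+1}^{-1}(m)\subset E_k$) and hence in $S^X$. Without this you do not know that $S^X$ meets every fibre of $leg_{k+1}$, and the isomorphism in (ii) cannot be concluded. Once surjectivity is in hand, your definition $A^X=(\mathcal{T}^EE)_k|_{S^X}\cap(\rho^\tau)^{-1}(TS^X)$ coincides with the paper's (the image of the prolonged section $\mathcal{T}\widetilde{W}_X$), and your bracket-closure argument, though brisker than the paper's appeal to functoriality of prolongations, is adequate.
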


\begin{theorem}\label{th4.5} There is a unique section $\xi^X\in\Gamma(A^X)$
satisfying the following conditions
$$(i_{\xi^X}\omega_L)_{|S^X}=(d^{{\mathcal
T}^EE}E_L)_{|S^X},\;\;\;S(\xi^X)=\Delta_{|S^X}.$$
\end{theorem}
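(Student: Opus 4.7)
The plan is to construct $\xi^X$ by correcting the original section $X$ along $S^X$ so that it lands in the subalgebroid $A^X$, using the Lie algebroid isomorphism $(Leg_{A^X}, leg_{S^X})$ from Theorem~\ref{th4.4} together with the pointwise identity (\ref{nucleodeleg}) as the main algebraic tool.

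For existence, I first note that $X_{|S^X}$ already meets the two required conditions: $(i_X\omega_L)_{|S^X}=(d^{{\mathcal T}^EE}E_L)_{|S^X}$ holds because $X$ solves the dynamical equation on $E_k\supseteq S^X$, and $S(X_{|S^X})=\Delta_{|S^X}$ holds by the very definition of $S^X$. The only defect is that $X_{|S^X}$ need not take values in $A^X$. To correct this I exploit the isomorphism $Leg_{A^X}\colon A^X\to ({\mathcal T}^EM_1)_k$ over $leg_{S^X}$: for each $e\in S^X$ there is a unique $\xi^X(e)\in A^X_e$ with $Leg_{A^X}(\xi^X(e))=Leg_{k+1}(X(e))$, and this assignment is smooth, hence defines a section $\xi^X\in \Gamma(A^X)$. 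By construction $\xi^X$ and $X_{|S^X}$ are two sections of $({\mathcal T}^EE)_k$ over $S^X$ that project under $Leg_{k+1}$ to the same section $Y\circ leg_{S^X}$. Their difference $\zeta:=\xi^X-X_{|S^X}$ therefore lies pointwise in $Ker\,Leg_{k+1}$, and (\ref{nucleodeleg}) gives $\zeta(e)\in Ker\,\omega_L(e)\cap ({\mathcal T}^E_eE)^V$ for every $e\in S^X$. Hence $i_\zeta\omega_L=0$ and, since $\zeta$ is vertical, $S(\zeta)=0$, so adding $\zeta$ preserves both required identities and $\xi^X$ is a solution in $\Gamma(A^X)$.

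For uniqueness, suppose $\xi^X_1,\xi^X_2\in\Gamma(A^X)$ both satisfy the two conditions. Their difference $\bar\zeta\in\Gamma(A^X)$ satisfies $(i_{\bar\zeta}\omega_L)_{|S^X}=0$ and $S(\bar\zeta)=0$. Writing locally $\bar\zeta=a^A\X_A+b^A\V_A$, the second identity becomes $a^A\V_A=0$, forcing $a^A=0$ and so $\bar\zeta$ vertical. Combined with $i_{\bar\zeta}\omega_L=0$, the pointwise identity (\ref{nucleodeleg}) then places $\bar\zeta(e)$ in $Ker({\mathcal T}_eleg_1)=Ker\,Leg_{k+1}$ for all $e\in S^X$. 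But $Leg_{A^X}$ is an isomorphism on $A^X$, so $A^X\cap Ker\,Leg_{k+1}=\{0\}$ fiberwise, and $\bar\zeta\equiv 0$.

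The substantive conceptual work has already been packaged into Theorem~\ref{th4.4}, where $A^X$ is produced as a SODE-compatible complement to $Ker\,Leg_{k+1}$ inside $({\mathcal T}^EE)_k|_{S^X}$; once that is granted, the present statement reduces to the purely fiberwise algebraic manipulation above, and the pointwise identity (\ref{nucleodeleg}) does all of the real work. The one step to watch is the claim that the difference $\zeta$ being pointwise in $Ker\,Leg_{k+1}$ really is both vertical and in $Ker\,\omega_L$, but this is exactly the content of (\ref{nucleodeleg}) and is what motivates the definition of $A^X$ in the first place.
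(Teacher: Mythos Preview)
Your proof is correct and follows essentially the same approach as the paper: you define $\xi^X$ as the unique $A^X$-valued section projecting under $Leg_{A^X}$ to $Leg_{k+1}\circ X_{|S^X}=Y\circ leg_{S^X}$, which is exactly the paper's $\xi^X={\mathcal T}\widetilde{W}_X\circ Y\circ leg_{S^X}$, and then both the existence argument (via the difference $\zeta\in Ker\,Leg_{k+1}$ being vertical and in $Ker\,\omega_L$ by (\ref{nucleodeleg})) and the uniqueness argument coincide with the paper's.
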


\begin{theorem}\label{th4.6} If $\rho_{A^X}:A^X\to TS^X$ is the anchor map of
the Lie algebroid $A^X\to S^X$ then the integral curves of the
vector field $\rho_{A^X}(\xi^X)$ on $S^X$ are solutions of the
Euler-Lagrange equations for $L$.
\end{theorem}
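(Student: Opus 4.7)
The plan is to verify, for any integral curve of the vector field $\rho_{A^X}(\xi^X)$, the two conditions defining a solution of the Euler--Lagrange equations on the Lie algebroid $E$ recalled in Section \ref{algebroides}: admissibility, and the dynamical identity $i_{(c(t),\dot c(t))}\omega_L - d^{\mathcal{T}^EE}E_L=0$ along the curve. Both will follow almost directly from the two properties of $\xi^X$ stated in Theorem \ref{th4.5}.

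First I would fix an integral curve $\gamma:I\to S^X$ of the vector field $\rho_{A^X}(\xi^X)$. Since $A^X$ is a Lie subalgebroid of $(\mathcal{T}^EE)_k$, which is itself a Lie subalgebroid of $\mathcal{T}^EE$, the anchor $\rho_{A^X}$ is nothing but the restriction to $A^X$ of the anchor $\rho^\tau$ of $\mathcal{T}^EE\to E$. Writing $\xi^X(\gamma(t))=(X_0(\gamma(t)),\,v(\gamma(t)))\in E_{\tau(\gamma(t))}\times T_{\gamma(t)}E$, the very definition of $\mathcal{T}^EE$ gives $\rho(X_0(\gamma(t)))=T_{\gamma(t)}\tau\bigl(v(\gamma(t))\bigr)$, while the integral curve equation $\dot\gamma(t)=\rho_{A^X}(\xi^X)(\gamma(t))$ forces $v(\gamma(t))=\dot\gamma(t)$.

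Next I would invoke the SODE condition $S(\xi^X)=\Delta_{|S^X}$ provided by Theorem \ref{th4.5}. By the equivalent characterization recalled in the first example of Section \ref{algebroides}, this is the same as $pr_1(\xi^X(e))=e$ for every $e\in S^X$. Hence $X_0(\gamma(t))=\gamma(t)$ and $\xi^X(\gamma(t))=(\gamma(t),\dot\gamma(t))$. The relation $\rho(X_0(\gamma(t)))=T\tau(v(\gamma(t)))$ then becomes $\rho(\gamma(t))=T\tau(\dot\gamma(t))=\dot m(t)$ with $m=\tau\circ\gamma$, which is exactly the admissibility of $\gamma$ in the sense of Section \ref{algebroides}.

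Finally, evaluating the dynamical identity of Theorem \ref{th4.5} at the point $\gamma(t)\in S^X$ and using the expression $\xi^X(\gamma(t))=(\gamma(t),\dot\gamma(t))$, one obtains
\[
i_{(\gamma(t),\dot\gamma(t))}\omega_L(\gamma(t)) - d^{\mathcal{T}^EE}E_L(\gamma(t)) = 0,
\]
which is the second condition for $\gamma$ to be a solution of the Euler--Lagrange equations. Combined with admissibility, this proves the theorem. The only step that requires a bit of care is making the two identifications explicit, namely $\rho_{A^X}=\rho^\tau_{|A^X}$ and the translation of $S(\xi^X)=\Delta_{|S^X}$ into $pr_1\circ\xi^X=\id_{S^X}$; once these are in place the statement is an immediate consequence of Theorem \ref{th4.5} and the intrinsic formulation of the Euler--Lagrange equations on $E$.
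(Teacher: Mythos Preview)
Your proof is correct. It takes a genuinely different route from the paper's argument, and it is worth pointing out the contrast.

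The paper proves the theorem by a direct local computation: it writes $\xi^X=y^B{\X_B}_{|S^X}+\xi^B{\V_B}_{|S^X}$ using the SODE condition, substitutes this into $(i_{\xi^X}\omega_L)_{|S^X}=(d^{{\mathcal T}^EE}E_L)_{|S^X}$ via the coordinate formulas (\ref{omegaL}) and (\ref{EL}), and extracts the local Euler--Lagrange equations (\ref{free-forces}) for the integral curves of $\rho_{A^X}(\xi^X)=\rho_B^iy^B\partial/\partial x^i+\xi^B\partial/\partial y^B$. Your argument bypasses all of this by appealing directly to the intrinsic characterization of solutions of the Euler--Lagrange equations given in Section \ref{algebroides} (admissibility together with $i_{(c,\dot c)}\omega_L=d^{{\mathcal T}^EE}E_L$ along the curve); once you observe that $\rho_{A^X}=\rho^{\tau}_{|A^X}$ and that the SODE condition forces $\xi^X(\gamma(t))=(\gamma(t),\dot\gamma(t))$, both requirements follow immediately from Theorem \ref{th4.5}. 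Your approach is shorter and coordinate-free, and it makes transparent that Theorem \ref{th4.6} is really a formal corollary of Theorem \ref{th4.5}; the paper's computation, on the other hand, has the advantage of displaying explicitly the local form of the Euler--Lagrange equations satisfied by the integral curves.
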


{\it Proof of Theorem \ref{th4.4}}. We consider the smooth map
$W_X:E_k\to E$ defined by
$$W_X(e)=pr_1(X(e)),$$
where $pr_1:{\mathcal T}^EE\to E$ is the restriction to ${\mathcal
T}^EE$ of the canonical projection $pr_1:E\times TE\to E$ on the
first factor.

Now, we will proceed in several steps.

\underline{First step}: We will prove that
\begin{equation}\label{firststep}
S^X\cap leg_{k+1}^{-1}(leg_{k+1}(e))=\{W_X(e)\},\mbox{ for all
}e\in E_k.
\end{equation}

If $e\in E_k$ then, using (\ref{Lioulo}), (\ref{omegaL}),
(\ref{EL}) and the fact that
$$(i_X\omega_L)_{|E_k}=(d^{{\mathcal T}^EE}E_L)_{|E_k},$$
we deduce that
$$(SX-\Delta)(e)\in Ker\omega_L(e).$$
On the other hand, it is clear that
$$(SX-\Delta)(e)\in ({\mathcal T}^E_eE)^V.$$
Thus, from (\ref{nucleodeleg}), we obtain that
$$(SX-\Delta)(e)\in Ker({\mathcal T}_eleg_1).$$
Therefore, if $\rho^\tau:{\mathcal T}^EE\to TE$ is the anchor map
of the Lie algebroid ${\mathcal T}^EE$, then
$$\rho^\tau((SX-\Delta)(e))\in Ker(T_eleg_1)=Ker(T_eleg_{k+1}).$$
This implies that $X^*=\rho^\tau(SX-\Delta_{|E_k})$ is a vector
field on $E_k$ which is vertical with respect to the submersion
$leg_{k+1}:E_k\to M_{k+1}$.

Suppose that the local expression of the section $X$ is
\begin{equation}\label{localX}
X=X^A\X_A+V^A\V_A.
\end{equation}
Then,
\begin{equation}\label{Xstar}
SX-\Delta=(X^A-y^A)\V_A\;\;\mbox{ and
}\;\;X^*=(X^A-y^A)\displaystyle\frac{\partial}{\partial y^A}.
\end{equation}

Moreover, since $X$ is $(Leg_{k+1},leg_{k+1})$-projectable, the
functions $X^A$ are constant on the fibres of $leg_{k+1}$.
Consequently, if $e\equiv(x^i_0,y^A_0)\in E_k$, it follows that
the integral curve of $X^*$ over the point $e$ is
$$s\mapsto\sigma(s)\equiv(x^i_0,X^A+e^{-s}(y^A_0-X^A)).$$
In particular,
$$\sigma(s)\equiv(x^i_0,X^A+e^{-s}(y^A_0-X^A))\in
leg_{k+1}^{-1}(leg_{k+1}(e)),\mbox{ for all }s\in\R,$$ which
implies that
$$\lim_{s\rightarrow\infty}\sigma(s)\equiv(x^i_0,X^A)\in
leg_{k+1}^{-1}(leg_{k+1}(e)).$$

Now, from (\ref{localX}), we have that
\begin{equation}\label{WXe}
W_X(e)=pr_1(X(e))\equiv(x^i_0,X^A).
\end{equation}
In addition, using (\ref{Xstar}) and (\ref{WXe}), one deduces that
$$X^*(W_X(e))=0.$$
On the other hand, if $e'\in S^X\cap leg_{k+1}^{-1}(leg_{k+1}(e))$
then, it is clear that
$$\tau(e)=\tau(e')$$
and, thus, $e'\equiv(x^i_0,y^A(e'))$. Furthermore, since $e'\in
S^X$, we obtain that
$$0=X^*(e')=(X^A-y^A(e'))\displaystyle\frac{\partial}{\partial
y^A}_{|e'}$$ which implies that
$$e'\equiv(x^i_0,X^A)\equiv W_X(e).$$

\underline{Second step}: We will prove that
$$S^X=\widetilde{W}_X(M_{k+1}),$$
where $\widetilde{W}_X:M_{k+1}\to E_k$ is a section of the
submersion $leg_{k+1}:E_k\to M_{k+1}$. Therefore, $S^X$ is a
submanifold of $E_k$ and $dim\,S^X=dim\,M_{k+1}$.

In fact, suppose that $e,e'\in E_k$ and
$$leg_{k+1}(e)=leg_{k+1}(e').$$
Then,
$$Leg_{k+1}(X(e))=Y(leg_{k+1}(e))=Y(leg_{k+1}(e'))=Leg_{k+1}(X(e')).$$
Consequently,
$$W_X(e)=pr_1(X(e))=pr_1(X(e'))=W_X(e').$$
So, we have that there exists a smooth map
$\widetilde{W}_X:M_{k+1}\to E_k$ such that the following diagram
is commutative

\begin{picture}(100,75)(-100,0)
\put(5,60){$E_k$} \put(97,60){\makebox(0,0){$E_k$}}
\put(20,63){\vector(1,0){65}} \put(8,55){\vector(0,-1){40}}
\put(-20,35){$leg_{k+1}$} \put(52,70){\makebox(0,0){$W_X$}}
\put(2,2){{$M_{k+1}$}} \put(18,15){\vector(2,1){72}}
\put(53,20){$\widetilde{W}_X$}
\end{picture}

Moreover, using (\ref{firststep}), we deduce that
$\widetilde{W}_X:M_{k+1}\to E_k$ is a section of the submersion
$leg_{k+1}:E_k\to M_{k+1}$ and
$$S^X=\widetilde{W}_X(M_{k+1}).$$

\underline{Third step}: We will prove the second part of the
theorem.

The section $\widetilde{W}_X:M_{k+1}\to E_k$ induces a map
$${\mathcal T}\widetilde{W}_X:({\mathcal
T}^EM_1)_{k}=(\rho^{\tau^*})^{-1}(TM_{k+1})\to
(\TEE)_k=(\rho^\tau)^{-1}(TE_k)$$ in such a way that the pair
$({\mathcal T}\widetilde{W}_X,\widetilde{W}_X)$ is a Lie algebroid
monomorphism. We will denote by $A^X$ the image of $({\mathcal
T}^EM_1)_{k}$ by the map ${\mathcal T}\widetilde{W}_X$. Then, it
is clear that $A^X$ is a Lie subalgebroid (over $S^X$) and the
pair $({\mathcal T}\widetilde{W}_X,\widetilde{W}_X)$ is an
isomorphism between the Lie algebroids $A^X\to S^X$ and
$({\mathcal T}^EM_1)_{k}\to M_{k+1}$. In fact, the inverse
morphism is the pair $(Leg_{A^X},leg_{S^X})$.

\hfill$\Box$

{\it Proof of Theorem \ref{th4.5}}. We consider the section
$\xi^X\in\Gamma(A^X)$ defined by
$$\xi^X={\mathcal T}\widetilde{W}_X\circ Y\circ leg_{S^X}.$$
Using (\ref{HamilY}) and the fact that
\begin{equation}\label{xiproject}
Leg_{A^X}\circ\xi^X=Y\circ leg_{S^X},
\end{equation}
it follows that
$$(i_{\xi^X}\omega_L)_{|S^X}=(d^{\TEE}E_L)_{|S^X}.$$
Now, from (\ref{xiproject}), we have that
$$(\xi^X-X)(e)\in Ker(Leg_{k+1})_{|({\mathcal
T}^E_eE)_k}=Ker({\mathcal T}_eleg_1),\;\mbox{ for all }e\in S^X,$$
which implies that (see (\ref{nucleodeleg}))
$$(S\xi^X)(e)=(SX)(e)=\Delta(e),\;\mbox{ for all }e\in S^X.$$

Next, suppose that $\eta$ is another section of the Lie algebroid
$A^X\to S^X$ such that
$$(i_\eta\omega_L)_{|S^X}=(d^{\TEE}E_L)_{|S^X},\;\;\;S\eta=\Delta_{|S^X}.$$
Then, it is clear that
$$(\eta-\xi^X)(e)\in Ker\omega_L(e)\cap({\mathcal
T}^E_eE)^V,\;\mbox{ for all }e\in S^X,$$ and, using
(\ref{nucleodeleg}), we deduce that
$$(\eta-\xi^X)(e)\in Ker({\mathcal
T}_eleg_1)=Ker(Leg_{k+1})_{|({\mathcal T}^E_eE)_k},\;\mbox{ for
all }e\in S^X.$$ Thus,
$$Leg_{A^X}((\eta-\xi^X)(e))=0$$
and, since $Leg_{A^X}:A^X\to({\mathcal T}^EM_1)_{k}$ is a vector
bundle isomorphism, we conclude that
$$\eta=\xi^X.$$

\hfill$\Box$

{\it Proof of Theorem \ref{th4.6}}. The section $\xi^X$ is a SODE
along the submanifold $S^X$. Therefore, from (\ref{Lioulo}) and
(\ref{endverlo}), we have that the local expression of $\xi^X$ is
\begin{equation}\label{exprexiX}
\xi^X=y^B{\X_B}_{|S^X}+\xi^B{\V_B}_{|S^X}.
\end{equation}

On the other hand, using (\ref{omegaL}), (\ref{EL}),
(\ref{exprexiX}) and the fact that
$(i_{\xi^X}\omega_L)_{|S^X}=(d^{\TEE}E_L)_{|S^X}$, it follows that
$$\xi^B\displaystyle\frac{\partial^2L}{\partial y^A\partial
y^B}+y^B\rho_B^i\frac{\partial^2L}{\partial x^i\partial
y^A}+\frac{\partial L}{\partial
y^C}\C_{AB}^Cy^B-\rho_A^i\frac{\partial L}{\partial
x^i}=0,\;\mbox{ for all }A.$$ Now, the local expression of the
vector field $\rho_{A^X}(\xi^X)$ on $S^X$ is
$$\rho_{A^X}(\xi^X)=\rho_B^iy^B\displaystyle\frac{\partial}{\partial
x^i}+\xi^B\frac{\partial}{\partial y^B}.$$

Consequently, the integral curves of $\rho_{A^X}(\xi^X)$ satisfy
the following equations
$$\displaystyle\frac{dx^i}{dt}=\rho_A^iy^A,\;\;\;\frac{d}{dt}\Big(\frac{\partial
L}{\partial y^A}\Big)+\frac{\partial L}{\partial
y^C}\C_{AB}^Cy^B-\rho_A^i\frac{\partial L}{\partial
x^i}=0,\;\mbox{ for all }i\mbox{ and }A,$$ which are the
Euler-Lagrange equations for $L$.

\hfill$\Box$

\begin{remark} If we apply the results obtained in this Section
for the particular case when the Lie algebroid $E$ is $TQ$, we
recover the results proved in \cite{Go,GoNe} for standard singular
Lagrangian systems.
\end{remark}

\begin{example}
{\rm To illustrate the theory we will consider a variation of an
example of singular lagrangian with symmetry. This example
corresponds to a mechanical model of field theories due to Capri
and Kobayashi (see \cite{CaKo1,CaKo2}).

Consider the lagrangian function
\[
L=\frac{1}{2}m_2\left( \dot{x}_2^2+\dot{y}_2^2\right)+ \dot{y}_2
x_2-\dot{x}_2y_2- x_1^2-y_1^2-x_2^2-y_2^2.
\]
The configuration space is $\widetilde{Q}=\R^4$ with local
coordinates $(x_1, y_1, x_2, y_2)$. Clearly, the lagrangian is
singular; in fact, since
\[
\omega_L=m_2dx_2\wedge d\dot{x}_2+ m_2dy_2\wedge
d\dot{y}_2+2dy_2\wedge dx_2,
\]
then
\[
\hbox{Ker }\omega_L=\hbox{span}\left\{ \frac{\partial }{\partial
x_1}, \frac{\partial }{\partial y_1}, \frac{\partial }{\partial
\dot{x}_1}, \frac{\partial}{\partial \dot{y}_1}\right\}.
\]

The system is invariant by the $S^1$ action
\[
\begin{array}{rcl}
S^1\times \widetilde{Q}& \longrightarrow & \widetilde{Q}\\
(\alpha,(x_1, y_1, x_2, y_2))&\longmapsto&(x_1, y_1,
x_2\cos\alpha-y_2\sin\alpha, x_2\sin\alpha+y_2\cos\alpha)
\end{array}
\]
Note that the action of $S^1$ on the open subset
$Q=\R^2\times(\R^2-\{(0,0)\})$ is free and then we may consider
the reduced space $Q/S^1$ and the Atiyah algebroid
$\tau_Q|S^1:TQ/S^1\to Q/S^1$. Taking polar coordinates
$x_2=\rho\cos\theta$ and $y_2=\rho\sin \theta$, we have that the
canonical projection $\pi:Q\to M=Q/S^1$ is given by
\[
\pi(x_1, y_1, \rho, \theta)=(x_1, y_1, \rho).
\]

It is clear that the Atiyah algebroid is isomorphic to the vector
bundle $TM\times \R\to M$.

On the other hand, a local basis of $S^1$-invariant vector fields
on $Q$ is $\{\frac{\partial}{\partial
x_1},\frac{\partial}{\partial y_1},\frac{\partial}{\partial\rho},$
$ \frac{\partial}{\partial\theta}\}$. These vector fields induce a
local basis of sections $\{e_1,e_2,e_3,e_0\}$ of the Atiyah
algebroid $\tau_Q|S^1:TQ/S^1\to M=Q/S^1$. Moreover, if
$(\lcf\cdot,\cdot\rcf,\rho)$ is the Lie algebroid structure on
$\tau_Q|S^1:TQ/S^1\to M=Q/S^1$, it follows that $\lcf
e_i,e_j\rcf=0$, for all $i$ and $j$, and
$$\rho(e_1)=\frac{\partial}{\partial x_1},\;\;\rho(e_2)=\frac{\partial}{\partial y_1},\;\;
\rho(e_3)=\frac{\partial}{\partial\rho},\;\;\rho(e_0)=0.$$ Now, if
$(x_1, y_1, \rho, \dot{x}_1, \dot{y}_1, \dot{\rho}, r)$ are the
local coordinates on $TQ/S^1$ induced by the local basis
$\{e_1,e_2,e_3,e_0\}$, then the reduced lagrangian is
\[
l=\frac{1}{2}m_2\left( \dot{\rho}^2+(\rho r)^2\right)+ \rho^2 r-
x_1^2-y_1^2-\rho^2.
\]
Thus, the Euler-Lagrange equations for $l$ are: \begin{eqnarray*}
m_2\ddot{\rho}-(m_2r+2)\rho r +2\rho&=&0,\\
m_2 r\rho^2+\rho^2&=&\hbox{constant},\\
x_1&=&0,\\
y_1&=&0.
\end{eqnarray*}

The local basis $\{e_1,e_2,e_3,e_0\}$ of $\Gamma(TQ/S^1)$ induces
a local basis
\[
\{\X_1, \X_2, \X_{3}, \X_0, \V_1, \V_2, \V_{3}, \V_0\}
\]
of $\Gamma(\T^{TQ/S^1}(TQ/S^1))$. The presymplectic 2-section
$\omega_l$ is written as
\[
\omega_l=m_2 \X^3\wedge \V^3+m_2\rho^2 \X^0\wedge
\V^0-\rho(m_2r+2)\X^3\wedge \X^0.
\]
The energy function is
\[
E_l=\frac{1}{2}m_2\left( \dot{\rho}^2+(\rho r)^2\right)+
x_1^2+y_1^2+\rho^2
\]
and
\[
d^{\T^{TQ/S^1}TQ/S^1}E_l= m_2\dot{\rho} \V^3+\rho^2m_2r\V^0
+(m_2\rho r^2+2\rho)\X^3+2x_1 \X^1+2y_1\X^2.
\]
Thus $\ker\omega_l=\{\X_1,\X_2, \V_1, \V_2\}$ and the primary
constraint submanifold $E_1\subset E=E_0$ is determined by the
vanishing of the constraints functions: $x_1=0,$ $y_1=0$. Now
$((\rho^{\tau_Q|S^1})^{-1}(T E_1))^\perp=\ker \omega_l$, and
therefore $E_f=E_1$.

Any solution $X\in ({\mathcal T}^E E)_1$  of the dynamical
equation \begin{equation}\label{xop}
(i_{X}\omega_l)_{|E_1}=(d^{\T^{TQ/S^1}TQ/S^1}E_l)_{|E_1}
\end{equation} is of the form: \[ X=\dot{\rho}\X_{3}+r\X_0+\tilde{f}
\V_1+\tilde{g} \V_2+
\frac{(m_2r+2)-2\rho}{m_2}\V_{3}-\frac{2\dot{\rho}(m_2r+1)}{m_2\rho}\V_0,
\] where $\tilde{f}$ and $\tilde{g}$ are arbitrary
functions on $E_1$.

The Legendre transformation $leg_l$ is in this particular case:
\[
leg_l(x_1, y_1, \rho; \dot{x}_1, \dot{y}_1, \dot{\rho}, r)=(x_1,
y_1, \rho; 0, 0, m_2\dot{\rho}, m_2\rho^2 r+\rho^2)
\]
Therefore, the submanifold $M_1=leg_l(TQ/S^1)$ of $T^*Q/S^1$ is
defined the constraints $p_{x_1}=0$ and $p_{x_2}=0$ where we
choose coordinates $(x_1, y_1, \rho; p_{{x}_1}, p_{{y}_1},
p_{\rho}, p_r)$ on $T^*Q/S^1$.

In the induced coordinates  $(x_1, y_1, \rho;  p_{\rho}, p_r)$ on
$M_1$, the hamiltonian $h: M_1\to \R$ is:
\[
h(x_1, y_1, \rho;  p_{\rho}, p_r)=\frac{1}{2m_2}
p_{\rho}^2+\frac{1}{2m_2\rho^2}
(p_{r}-\rho^2)^2+x_1^2+y_1^2+\rho^2.
\]

Applying the constraint algorithm to the presymplectic Lie
algebroid  $(\T^E M_1, \Omega_1,$ $d^{\T^E M_1} h)$ we deduce that
the final constraint submanifold $M_2$ is determined by
\[
M_2=\{(x_1, y_1, \rho;  p_{\rho}, p_r) \in M_1\; |\; x_1=0,
y_1=0\}
\]
where there exists a well defined solution of
\[
(i_{Y}\Omega_1)_{|M_{2}}=(d^{{\mathcal T}^EM_1}h)_{|M_{2}}.
\]

Now, we return to the lagrangian picture and we study the SODE
problem. Observe first that all the solutions $X$ of Equation
(\ref{xop}) are $(Leg_2, leg_2)$-projectable. Now, if  we
additionally impose the condition $SX=\Delta_{|E_1}$, that is,
\[
\dot{\rho}\V_{3}+r\V_0=\dot{x}_1
\V_1+\dot{y}_1\V_2+\dot{\rho}\V_{3}+r\V_0
\]
along $E_1$, we obtain that the submanifold  $S^X\subseteq E_1$
is uniquely defined as
\[
S^X=\{(0,0, \rho; 0, 0, \dot{\rho}, r)\in TQ/S^1\},
\]
and, therefore, the section $\xi^X$ is the SODE defined by:
\[
\xi^X=\dot{\rho}\X_{3}{_{|S^X}}+r\X_0{_{|S^X}}+
\frac{(m_2r+2)-2\rho}{m_2}\V_{3}{_{|S^X}}-\frac{2\dot{\rho}(m_2r+1)}{m_2\rho}{\V_0}_{|S^X}.
\]

}
\end{example}

\section{Vakonomic mechanics on Lie algebroids}\label{sec:vak}
\subsection{Vakonomic equations and vakonomic bracket}\label{subsec:vak-bracket}
Let $\tau :E\to Q$ be a Lie algebroid of rank $n$ over a manifold
$Q$ of dimension $m$ and $L:E\to \R$ be a Lagrangian function on
$E$. Moreover, let $M\subset E$ be an embedded submanifold of
dimension $n+m-\bar{m}$ such that $\tau _M=\tau _{|M}:M\to Q$ is a
surjective submersion.

Now, suppose that $e$ is a point of $M$, $\tau_M(e)=x\in Q$, that
$(x^i)$ are local coordinates on an open subset $U$ of $Q$, $x\in
U$, and that $\{e_A\}$ is a local basis of $\Gamma(E)$ on $U$.
Denote by $(x^i,y^A)$ the corresponding local coordinates for $E$
on the open subset $\tau^{-1}(U)$. Assume that
$$M\cap\tau^{-1}(U)\equiv\{(x^i,y^A)\in\tau^{-1}(U) \,|\,
\Phi^\alpha(x^i,y^A)=0,\;\alpha=1,\dots,\bar{m}\}$$ where
$\Phi^\alpha$ are the local independent constraint functions for
the submanifold $M$. The rank of the $(\bar{m}\times(n+m))$-matrix
$$\Big(\displaystyle\frac{\partial\Phi^\alpha}{\partial
x^i},\frac{\partial\Phi^\alpha}{\partial y^A}\Big)$$ is maximun,
that is, $\bar{m}$. On the other hand, since $\tau_M:M\to Q$ is a
submersion, we deduce that there exists $v_i\in T_eM$ such that
$(T\tau_M)(v_i)={\frac{\partial}{\partial x^i}}_{|x}$, for all
$i\in\{1,\dots,m\}$. Thus,
$$v_i={\displaystyle\frac{\partial}{\partial
x^i}}_{|e}+v_i^A{\frac{\partial}{\partial y^A}}_{|e}$$ which
implies that
$$\displaystyle{\frac{\partial\Phi^\alpha}{\partial
x^i}}_{|e}=-v^A_i{\frac{\partial\Phi^\alpha}{\partial
y^A}}_{|e},\mbox{ for }\alpha\in\{1,\dots,\bar{m}\}\mbox{ and
}i\in\{1,\dots,m\}.$$

Therefore, the rank of the matrix
$$\Big(\displaystyle{\frac{\partial\Phi^\alpha}{\partial
y^A}}_{|e}\Big)_{\alpha=1,\dots,\bar{m};A=1,\dots,n}$$

\noindent is $\bar{m}$. We will suppose, without the loss of
generality, that the $(\bar{m}\times\bar{m})$-matrix
$$\Big(\displaystyle{\frac{\partial\Phi^\alpha}{\partial
y^B}}_{|e}\Big)_{\alpha=1,\dots,\bar{m};B=1,\dots,\bar{m}}$$ is
regular. Then, we will use the following notation
$$y^A=(y^\alpha,y^a),$$
for $1\leq A\leq n$, $1\leq\alpha\leq\bar{m}$ and $\bar{m}+1\leq
a\leq n$.

Now, using the implicit function theorem, we obtain that there
exist an open subset $\widetilde{V}$ of $\tau^{-1}(U)$, an open
subset $W\subseteq\R^{m+n-\bar{m}}$ and smooth real functions
$$\Psi^\alpha:W\to\R,\;\;\alpha=1,\dots,\bar{m},$$
such that
$$M\cap\widetilde{V} \equiv \{(x^i,y^A)\in\widetilde{V} \,|\,
y^\alpha=\Psi^\alpha(x^i,y^a),\;\alpha=1,\dots,\bar{m}\}.$$
Consequently, $(x^i,y^a)$ are local coordinates on $M$. We will
denote by $\tilde{L}$ the restriction of $L$ to $M$.

Now, we will develop a geometric description of vakonomic
mechanics on Lie algebroids, naturally generalizing the previous
results of the third author and co\-lla\-borators \cite{CLMM}.
Moreover, for the case $M=E$ we also generalize the formulation
given by Skinner and Rusk  \cite{SR1,SR2} for singular lagrangians
to general Lie algebroids.

Consider the Whitney sum of $E^*$ and $E$, $E^*\oplus E$, and the
canonical projections $pr_1: E^*\oplus E \longrightarrow E^*$ and
$pr_2: E^*\oplus E \longrightarrow E$. Now, let $W_0$ be the
submanifold  $W_0=pr_2^{-1} (M)=E^*\times _Q M$  and the
restrictions $\pi_1={pr_1}_{|W_0}$ and $\pi_2={pr_2}_{|W_0}$. Also
denote by $\nu: W_0\longrightarrow Q$ the canonical projection.
The following diagrams illustrate the situation

\begin{picture}(375,65)(20,40)
\put(97,85){$E^*\oplus E$} \put(75,40){\makebox(0,0){$E^*$}}
\put(110,80){\vector(-1,-1){30}} \put(120,80){\vector(1,-1){30}}
\put(155,40){\makebox(0,0){$E$}} \put(75,65){\makebox(0,0){pr$_1$}}
\put(155,65){\makebox(0,0){pr$_2$}}
\put(260,80){\vector(-1,-1){30}} \put(270,80){\vector(1,-1){30}}
\put(247,85){$E^*\times _Q M$} \put(225,40){\makebox(0,0){$E^*$}}
\put(305,40){\makebox(0,0){$M$}} \put(225,65){\makebox(0,0){$\pi
_1$}} \put(305,65){\makebox(0,0){$\pi _2$}}
\end{picture}

\vspace{.2cm} Next, we consider the prolongation of the Lie
algebroid $E$ over $\tau ^*:E^*\to Q$ (res\-pectively, $\nu
:W_0\to Q$). We will denote this Lie algebroid by ${\mathcal
T}^EE^*$ (respectively, ${\mathcal T}^E W_0$). Moreover, we can
prolong $\pi _1:W_0\to E^*$ to a morphism of Lie algebroids
${\mathcal T}\pi _1 :{\mathcal T}^E W_0\to {\mathcal T}^EE^*$
defined by $\T\pi_1=(Id,T\pi_1)$.

If $(x^i,p_A)$ are the local coordinates on $E^*$ associated with
the local basis $\{e_A\}$ of $\Gamma(E)$, then $(x^i,p_A,y^a)$ are
local coordinates for $W_0$ and we may consider the local basis
$\{{\mathcal Y}_A ,{\mathcal P}^A ,\mathcal{V}_a\}$ of
$\Gamma({\mathcal T}^{E}W_{0})$ defined by
$$\begin{array}{rcl} {\mathcal Y}_A(e^*, \check{e})&=&(e_A(x),
\rho_A^i\displaystyle\frac{\partial
}{\partial x^i}_{|e^*}, 0),\\
{\mathcal P}^A(e^*,\check{e})&=&(0,\displaystyle\frac{\partial
}{\partial
p_{A}}_{|e^*}, 0),\\
{\mathcal V}_a(e^*, \check{e})&=&(0, 0,\displaystyle\frac{\partial
}{\partial y^{a}}_{|\check{e}}),
\end{array}$$
where $(e^*, \check{e})\in W_0$ and $\nu(e^*,\check{e})=x$. If
$(\lcf \cdot , \cdot \rcf ^{\nu}, \rho^{\nu})$ is the Lie
algebroid structure on ${\mathcal T}^{E}W_{0}$, we have that
\[
\lcf {\mathcal Y}_{A}, {\mathcal Y}_{B} \rcf^{\nu} = {\mathcal
C}_{AB}^{C} {\mathcal Y}_{C},
\]
and the rest of the fundamental Lie brackets are zero. Moreover,
\[
\rho^{\nu}({\mathcal Y}_{A}) = \rho_{A}^{i} \displaystyle
\frac{\partial}{\partial x^{i}}, \; \; \rho^{\nu}({\mathcal
P}^{A}) = \displaystyle \frac{\partial}{\partial p_{A}}, \; \;
\rho^{\nu}({\mathcal V}_{a}) = \displaystyle
\frac{\partial}{\partial y^{a}}.
\]

The Pontryagin Hamiltonian $H_{W_0}$ is a function in
$W_0=E^*\times _Q M$ given by
\[
H_{W_0} (e^* , \check{e})= \langle e^* , \check{e}\rangle
-\tilde{L}(\check{e}),
\]
or, in local coordinates,
\begin{equation}\label{H0}
H_{W_0}(x^i, p_A, y^a)=p_ay^a+p_{\alpha}\Psi^{\alpha}(x^i,
y^a)-\tilde{L}(x^i, y^a)\, .
\end{equation}

Moreover, one can consider the presymplectic 2-section $\Omega
_0=({\mathcal T}\pi _1, \pi_1)^*\Omega _E$, where $\Omega _E$ is
the canonical symplectic section on ${\mathcal T}^EE^*$ defined in
Equation (\ref{sym}). In local coordinates,
\begin{equation}\label{Omega0}
\Omega_0=\Y^A\wedge {\mathcal P}_A + \frac{1}{2} {\mathcal
C}_{AB}^C p_C \Y^A\wedge \Y^B.
\end{equation}

Therefore, we have the triple $({\mathcal T}^{E}W_0,\Omega
_0,d^{{\mathcal T}^{E}W_0}H_{W_0})$ as a presymplectic hamiltonian
system.

\begin{definition}
The vakonomic problem on Lie algebroids is find the solutions for
the equation
\begin{equation}\label{Hamilt}
i_{X}\Omega _0 = d^{{\mathcal T}^{E}W_0} H_{W_0},
\end{equation}
that is, to solve the constraint algorithm for $({\mathcal
T}^{E}W_0,\Omega _0,d^{{\mathcal T}^{E}W_0} H_{W_0})$.
\end{definition}

In local coordinates, we have that
\[
d^{{\mathcal T}^{E}W_0}H_{W_0} = (p_\alpha\frac{\partial \Psi
^\alpha}{\partial x^i} -\frac{\partial \tilde{L}}{\partial
x^i})\rho ^i_A \Y^A + \Psi ^\alpha \P_\alpha +y^a \P_a +
(p_a+p_\alpha \frac{\partial \Psi ^\alpha}{\partial
y^a}-\frac{\partial \tilde{L}}{\partial y^a})\V^a.
\]

If we apply the constraint algorithm,
\[
W_1=\{ w\in E^*\times _Q M\, | \, d^{{\mathcal T}^{E}W_0}
H_{W_0}(w)(Y)=0,\quad \forall Y\in Ker\,\Omega _0(w)\}.
\]
Since $Ker\,\Omega _0= span \{ {\mathcal V}_a \}$, we get that $W_1$
is locally characterized by the equations
\[
\varphi _a = d^{{\mathcal T}^{E}W_0} H_{W_0} ({\mathcal
V}_a)=p_a+p_\alpha \frac{\partial \Psi ^\alpha}{\partial
y^a}-\frac{\partial \tilde{L}}{\partial y^a}=0,
\]
or
\[
p_a=\frac{\partial \tilde{L}}{\partial y^a}-p_\alpha
\frac{\partial \Psi ^\alpha}{\partial y^a}, \quad \bar{m}+1\leq a
\leq n .
\]
Let us also look for the expression of $X$ satisfying Eq.
(\ref{Hamilt}). A direct computation shows that
\[
X= y^a{\mathcal Y}_a + \Psi ^\alpha {\mathcal Y}_\alpha + \Big [
\Big ( \frac{\partial \tilde{L}}{\partial x^i}-p_\alpha
\frac{\partial \Psi ^\alpha}{\partial x^i} \Big ) \rho ^i_A
-y^a{\mathcal C}^B_{Aa}p_B-\Psi ^\alpha {\mathcal
C}^B_{A\alpha}p_B \Big ]{\mathcal P}^A + \Upsilon ^a{\mathcal
V}_a.
\]
Therefore, the vakonomic equations are
$$\left \{
\begin{array}{l}
\displaystyle \dot{x}^i=y^a \rho ^i_a+\Psi ^\alpha \rho ^i_\alpha ,\\[10pt]
\displaystyle \dot{p}_{\alpha}=\Big ( \frac{\partial
\tilde{L}}{\partial x^i}-p_\beta \frac{\partial \Psi
^\beta}{\partial x^i} \Big ) \rho ^i_{\alpha} -y^a {\mathcal
C}^B_{\alpha a}p_B-\Psi ^\beta {\mathcal
C}^B_{\alpha\beta}p_B ,\\[10pt]
\displaystyle \frac{d}{dt}\left( \frac{\partial \tilde{L}}{\partial
y^a}-p_\alpha \frac{\partial \Psi ^\alpha}{\partial y^a} \right)=
\Big ( \frac{\partial \tilde{L}}{\partial x^i}-p_\alpha
\frac{\partial \Psi ^\alpha}{\partial x^i} \Big ) \rho ^i_{a} -y^b
{\mathcal
C}^B_{a b}p_B-\Psi ^\alpha {\mathcal C}^B_{a\alpha}p_B.\\[10pt]
\end{array}
\right .$$
\begin{remark}
We note that the vakonomic equations can be obtained following a
constrained variational principle. Below, we will show this variational
way of obtaining these equations (see Subsection \ref{subsec:variational}).
\end{remark}

Of course, we know that there exist sections $X$ of ${\mathcal
T}^{E}W_0$ along $W_1$ satisfying (\ref{Hamilt}), but they may not
be sections of $(\rho^\nu)^{-1}(TW_1)={\mathcal T}^{E}W_1$, in
general. Then, following the procedure detailed in Section
\ref{pre}, we obtain a sequence of embedded submanifolds
\[
\ldots \hookrightarrow W_{k+1}\hookrightarrow W_{k}\hookrightarrow
\ldots \hookrightarrow W_{2}\hookrightarrow W_{1}\hookrightarrow
W_{0}=E^*\times _Q M.
\]
If the algorithm stabilizes, then we find a final constraint
submanifold $W_f$ on which at least a section $X\in\Gamma(
{\mathcal T}^{E}W_f)$ verifies
$$(i_{X}\Omega _0 = d^{{\mathcal T}^{E}W_0} H_{W_0})_{|W_f}.$$

As in \cite{CLMM}, we analyze the case when $W_f=W_1$. Consider
the restriction $\Omega_{1}$ of $\Omega_{0}$ to ${\mathcal
T}^{E}W_1$.
\begin{proposition}\label{prop:charact-sympl}
$\Omega_{1}$ is a symplectic section of the Lie algebroid
${\mathcal T}^{E}W_1$ if and only if for any system of coordinates
$(x^i, p_A, y^a)$ on $W_0$ we have that
\[
\det\left( \frac{\partial^2 \tilde{L}}{\partial y^a \partial
y^b}-p_{\alpha}\frac{\partial^2 {\Psi^{\alpha}}}{\partial y^a
\partial y^b}\right)\not=0,\mbox{ for all point in }W_1.
\]
\end{proposition}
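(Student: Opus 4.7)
Since $\Omega_1$ is the restriction of the closed 2-section $\Omega_0=(\mathcal{T}\pi_1,\pi_1)^*\Omega_E$ to the Lie subalgebroid $\mathcal{T}^E W_1\to W_1$, it is automatically closed, so the content of the statement is pure non-degeneracy. The plan is to compute $\ker\Omega_1(w)$ at every $w\in W_1$ in the local basis $\{\mathcal{Y}_A,\mathcal{P}^A,\mathcal{V}_a\}$ of $\mathcal{T}^E W_0$ and to read off from the computation the precise role played by the matrix
\[
M_{ab}=\frac{\partial^2\tilde L}{\partial y^a\partial y^b}-p_\alpha\frac{\partial^2\Psi^\alpha}{\partial y^a\partial y^b}.
\]
I would first recall, from the expression (\ref{Omega0}) of $\Omega_0$, that $\ker\Omega_0$ is locally spanned by the vertical sections $\mathcal{V}_a$, and that for any $X=X^A\mathcal{Y}_A+X_A\mathcal{P}^A+Z^a\mathcal{V}_a$
\[
i_X\Omega_0=\bigl(-X_B+\mathcal{C}_{AB}^C p_C X^A\bigr)\mathcal{Y}^B+X^A\mathcal{P}_A,
\]
an expression in which \emph{no} $\mathcal{V}^b$ appears.

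Next I would characterize the $\Omega_0$-annihilator of $\mathcal{T}^E_w W_1$ inside $(\mathcal{T}^E_w W_0)^*$. Because $W_1$ is cut out of $W_0$ by the independent functions $\varphi_a=p_a+p_\alpha\partial\Psi^\alpha/\partial y^a-\partial\tilde L/\partial y^a$ and because $\mathcal{T}^E W_1=(\rho^\nu)^{-1}(TW_1)$, the annihilator is exactly the span of the $d^{\mathcal{T}^EW_0}\varphi_a$. A direct computation with the formulas for $d^{\mathcal{T}^E W_0}$ from the excerpt gives
\[
d^{\mathcal{T}^EW_0}\varphi_a=A^A_a\,\mathcal{Y}^A+\mathcal{P}_a+\frac{\partial\Psi^\alpha}{\partial y^a}\mathcal{P}_\alpha+M_{ab}\,\mathcal{V}^b,
\]
where $A^A_a$ collects the $x^i$-derivatives and is irrelevant for what follows. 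Thus $X\in\ker\Omega_1(w)$ iff $X\in\mathcal{T}^E_w W_1$ and $i_X\Omega_0=\lambda^a d^{\mathcal{T}^EW_0}\varphi_a$ for some multipliers $\lambda^a$.

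Now comes the decisive step: comparing the $\mathcal{V}^b$ coefficients of both sides gives the linear system
\[
\lambda^a M_{ab}=0\qquad\text{for all }b.
\]
If $\det(M_{ab})\neq 0$ on $W_1$, then $\lambda^a=0$, hence $i_X\Omega_0=0$, so $X\in\ker\Omega_0$ and we may write $X=Z^a\mathcal{V}_a$. The tangency condition $X\in\mathcal{T}^E_w W_1$ forces $\rho^\nu(X)(\varphi_b)=Z^a M_{ab}=0$, whence $Z^a=0$ and $X=0$. This shows non-degeneracy, hence that $\Omega_1$ is symplectic. Conversely, if $M$ is degenerate at some $w\in W_1$, any $0\neq Z^a$ in its kernel gives $X=Z^a\mathcal{V}_a\in\ker\Omega_0\cap\mathcal{T}^E_w W_1\subseteq\ker\Omega_1(w)$, so $\Omega_1$ fails to be symplectic.

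The argument is essentially a bookkeeping exercise, and I do not anticipate any serious obstacle; the only mildly delicate point is to be sure that $(\mathcal{T}^E_w W_1)^0$ is genuinely spanned by the $d^{\mathcal{T}^EW_0}\varphi_a$ (which uses the linear independence of these forms, in turn coming from the regularity of the Jacobian $(\partial\Phi^\alpha/\partial y^B)$ assumed at the start of the section) and to confirm that the symmetric matrix $M_{ab}$ is the \emph{same} object one meets both in $\mathcal{V}^b$-matching and in the tangency condition on vertical sections, so that the two implications really close.
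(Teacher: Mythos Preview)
Your proof is correct and hinges on the same two ingredients the paper uses: the identification $(\mathcal{T}^E_wW_1)^\circ=\langle d^{\mathcal{T}^EW_0}\varphi_a(w)\rangle$ and the computation $(d^{\mathcal{T}^EW_0}\varphi_a)(\mathcal{V}_b)=\pm M_{ab}$ (up to a harmless sign in your expression for $d^{\mathcal{T}^EW_0}\varphi_a$: the $\mathcal{V}^b$-coefficient is actually $-M_{ab}$, not $+M_{ab}$). Both arguments reduce the question to whether $\ker\Omega_0(w)\cap\mathcal{T}^E_wW_1=\{0\}$, which is exactly the regularity of $M_{ab}$.

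The packaging differs slightly. The paper invokes the linear-algebra dimension identity (\ref{Forortho}),
\[
\dim(\mathcal{T}^E_wW_1)^{\perp,\Omega_0}=\dim(\mathcal{T}^E_wW_0)-\dim(\mathcal{T}^E_wW_1)+\dim\bigl(\ker\Omega_0(w)\cap\mathcal{T}^E_wW_1\bigr),
\]
together with the inclusion $\ker\Omega_0(w)\subseteq(\mathcal{T}^E_wW_1)^{\perp,\Omega_0}$, to conclude that these two spaces coincide when the intersection is zero, and hence that $\mathcal{T}^E_wW_1\cap(\mathcal{T}^E_wW_1)^{\perp,\Omega_0}=\{0\}$. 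You instead compute $\ker\Omega_1(w)$ directly via the Lagrange-multiplier characterization $i_X\Omega_0=\lambda^a\,d^{\mathcal{T}^EW_0}\varphi_a$ and match coefficients. Your route is a touch more elementary and self-contained (no external citation for the dimension formula), while the paper's route makes the symplectic-linear-algebra structure more visible and generalizes immediately to other coisotropic-type situations. Either way the crux is the same.
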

\begin{proof}
It is clear that $d^{{\mathcal T}^{E}W_1}\Omega_1=0$.

On the other hand, if $w\in W_1$ then, since the elements
$(d^{{\mathcal T}^EW_0}\varphi_a)(w)$ are independent in
$({\mathcal T}^E_wW_0)^*$, we have that
$$({\mathcal T}^E_wW_1)^\circ=<(d^{{\mathcal
T}^EW_0}\varphi_a)(w)>$$ (note that $dim\;({\mathcal
T}^E_wW_1)^\circ=n-\bar{m}$). Moreover, using a well-known result
(see, for instance, \cite{LeRo}), we deduce that
\begin{equation}\label{for}
dim\;({\mathcal T}^E_wW_1)^{\perp,\Omega_0}=dim\;({\mathcal
T}^E_wW_0)-dim\;({\mathcal
T}^E_wW_1)+dim\;(Ker\Omega_0(w)\cap{\mathcal T}^E_wW_1).
\end{equation}
Now, suppose that $\Omega_1$ is a nondegenerate 2-section on
${\mathcal T}^E_wW_1\to W_1$. Then, it is clear that
$$Ker\Omega_0(w)\cap{\mathcal T}^E_wW_1=\{0\}.$$
Thus, the matrix $(d^{{\mathcal T}^{E}W_0}\varphi_a)(w) (\V_b(w))$
is regular, that is,
\[
\det\left( {\frac{\partial^2 \tilde{L}}{\partial y^a \partial
y^b}}_{|w}-p_{\alpha}(w){\frac{\partial^2
{\Psi^{\alpha}}}{\partial y^a
\partial y^b}}_{|w}\right)\not=0.
\]

Conversely, assume that the matrix $(d^{{\mathcal
T}^{E}W_0}\varphi_a)(w) (\V_b(w))$ is regular, then
$$Ker\Omega_0(w)\cap{\mathcal T}^E_wW_1=\{0\}.$$
This implies that (see (\ref{for}))
$$dim\;({\mathcal T}^E_wW_1)^{\perp,\Omega_0}=dim\;({\mathcal
T}^E_wW_0)-dim\;({\mathcal T}^E_wW_1)=dim\;(Ker\Omega_0(w)).$$
Therefore, since $Ker\Omega_0(w)\subseteq({\mathcal
T}^E_wW_1)^{\perp,\Omega_0}$, it follows that
$$Ker\Omega_0(w)=({\mathcal
T}^E_wW_1)^{\perp,\Omega_0},$$ and, consequently,
$$({\mathcal T}^E_wW_1)\cap({\mathcal
T}^E_wW_1)^{\perp,\Omega_0}=\{0\},$$ that is, $\Omega_1(w)$ is a
nondegenerate 2-section.
\end{proof}

In what follows, we will use the following notation
\[
{\mathcal R}_{ab} = \displaystyle \frac{\partial
\tilde{L}}{\partial y^{a} \partial y^{b}} - p_{\alpha}
\frac{\partial^{2} \Psi^{\alpha}}{\partial y^{a} \partial y^{b}},
\; \; \mbox{ for all } a \mbox{ and } b.
\]

\begin{remark} Suppose that the submanifold $M$ is a real
vector subbundle $D$ (over Q) of $E$ (note that $\tau_{D} =
\tau_{|D}: D \to M$ is a surjective submersion). Then, we may
consider local coordinates $(x^{i})$ on an open subset $U$ of $Q$
and a local basis $\{e_{\alpha}, e_{a} \}$ of $\Gamma(E)$ on $U$
such that $\{e_{a}\}$ is a local basis of $\Gamma(D)$. Thus, if
$(x^{i}, y^{\alpha}, y^{a})$ are the corresponding local
coordinates on $\tau^{-1}(U)$, we have that
\[
\tau_{D}^{-1}(U) = D \cap \tau^{-1}(U) \equiv \{(x^{i},
y^{\alpha}, y^{a}) \in \tau^{-1}(U) / y^{\alpha} = 0 \}.
\]
In other words, the local function $\Psi^{\alpha}$ is the zero
function, for all $\alpha$. Therefore, in this case,
\[
{\mathcal R}_{ab} = \displaystyle \frac{\partial^{2}
\tilde{L}}{\partial y^{a} \partial y^{b}}, \; \; \mbox{ for all }
a \mbox{ and } b.
\]
We remark that the condition
\[
\det\left( \displaystyle \frac{\partial^2 \tilde{L}}{\partial y^a
\partial y^b}\right)\not=0
\]
implies that the corresponding nonholonomic problem determined by
the pair $(L, D)$ has a unique solution (see \cite{CoLeMaMa}).
\end{remark}

\begin{remark}\label{subW1}
We remark that the condition $\det\left( {\mathcal R}_{ab}
\right)\not=0$ implies that the matrix $\left( \displaystyle
\frac{\partial \varphi_{a}}{\partial y^{b}} \right)_{a, b =
\bar{m}+1, \dots , n}$ is regular. Thus, using the implicit
theorem function, we deduce that $(x^{i}, p_{A}, y^{a})$ are local
coordinates for $W_{0}$ on an open subset $A_{0} \subseteq W_{0}$
in such a way that there exist an open subset $\tilde{W} \subseteq
\R^{m+n}$ and smooth real functions
\[
\mu^{a}: \tilde{W} \to \R, \; \; \; a = \bar{m} +1, \dots, n,
\]
such that
\begin{equation}\label{ImplW1}
W_{1} \cap A_0 = \{(x^{i}, p_{A}, y^{a}) \in A_0 / y^{a} =
\mu^{a}(x^i, p_{A}), \; a = \bar{m} + 1, \dots, n \}.
\end{equation}
Therefore, a local basis of $\Gamma ({\mathcal T}^{E}W_{1})$ is
given by
\[
\{ {\mathcal Y}_{A1} = ({\mathcal Y}_{A} + \rho^{i}_{A}
\displaystyle \frac{\partial \mu^{a}}{\partial x^{i}} {\mathcal
V}_{a})_{|W_{1}}, {\mathcal P}_{1}^{A} = ({\mathcal P}^{A} +
\displaystyle \frac{\partial \mu^{a}}{\partial p_{A}} {\mathcal
V}_{a})_{|W_{1}} \}.
\]
This implies that
\[
\{ {\mathcal Y}_{A1}, {\mathcal P}_{1}^{A}, ({\mathcal
V}_{a})_{|W_{1}} \}
\]
is a local basis of $\Gamma({\mathcal T}^{E}_{W_{1}}W_{0})$. Note
that, from (\ref{ImplW1}), it follows that $(x^{i}, p_{A})$ are
local coordinates on $W_{1}$. Moreover, if $\nu_{1}: W_{1} \to Q$
is the canonical projection and $(\lcf \cdot , \cdot
\rcf^{\nu_{1}}, \rho^{\nu_{1}})$ is the Lie algebroid structure on
${\mathcal T}^{E}W_{1} \to W_{1}$, we have that
\[
\lcf {\mathcal Y}_{A1}, {\mathcal Y}_{B1} \rcf^{\nu_{1}} =
C_{AB}^{C} {\mathcal Y}_{C1}
\]
and the rest of the fundamental Lie brackets are zero. In
addition,
\begin{equation}\label{rhonu1}
\rho^{\nu_{1}}({\mathcal Y}_{A1}) = \rho^{i}_{A} \displaystyle
\frac{\partial}{\partial x^{i}}, \; \; \rho^{\nu_{1}}({\mathcal
P}_{1}^{A}) = \displaystyle \frac{\partial}{\partial p_{A}}.
\end{equation}
\end{remark}

Now, we will prove the following result.

\begin{theorem}\label{unic}
If $\Omega_{1}$ is a symplectic $2$-section on the Lie algebroid
${\mathcal T}^{E}W_{1} \to W_{1}$ then there exists a unique
section $\xi_{1}$ of ${\mathcal T}^{E}W_{1} \to W_{1}$ whose
integral curves are solutions of the vakonomic equations for the
system $(L, M)$. In fact, if $H_{W_{1}}$ is the restriction to
$W_{1}$ of the Pontryagin Hamiltonian $H_{W_{0}}$, then $\xi_{1}$
is the Hamiltonian section of $H_{W_{1}}$ with respect to the
symplectic section $\Omega_{1}$, that is,
$$i_{\xi_{1}}\Omega_{1} = d^{{\mathcal T}^{E}W_{1}}H_{W_{1}}.$$
\end{theorem}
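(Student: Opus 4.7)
The plan is to prove existence, uniqueness, and the integral-curve statement in one shot, by upgrading the Hamilton equation on the symplectic algebroid $(\mathcal{T}^E W_1,\Omega_1)$ to the full vakonomic equation (\ref{Hamilt}) along $W_1$. Since $\Omega_1$ is symplectic, the flat morphism $\flat_{\Omega_1}\colon \mathcal{T}^E W_1\to(\mathcal{T}^E W_1)^*$ is a vector bundle isomorphism over $\mathrm{id}_{W_1}$, so there is a unique $\xi_1\in\Gamma(\mathcal{T}^E W_1)$ solving $i_{\xi_1}\Omega_1 = d^{\mathcal{T}^E W_1}H_{W_1}$. This already delivers the Hamiltonian section claim.

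To show that the integral curves of $\rho^{\nu_1}(\xi_1)$ are vakonomic solutions, I want to check that, regarded as a section of $\mathcal{T}^E W_0$ along $W_1$ via the natural Lie algebroid inclusion $j\colon \mathcal{T}^E W_1\hookrightarrow \mathcal{T}^E_{W_1}W_0$, the section $\xi_1$ satisfies
\[
\bigl(i_{\xi_1}\Omega_0 - d^{\mathcal{T}^E W_0}H_{W_0}\bigr)(w)=0\quad\text{for all }w\in W_1.
\]
The key ingredient, already extracted inside the proof of Proposition \ref{prop:charact-sympl}, is that the symplecticity of $\Omega_1$ forces $\ker\Omega_0(w)\cap\mathcal{T}^E_w W_1=\{0\}$. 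Combined with the dimension formula (\ref{for}), this yields $(\mathcal{T}^E_w W_1)^{\perp,\Omega_0}=\ker\Omega_0(w)$ and hence the direct sum decomposition
\[
\mathcal{T}^E_w W_0 \;=\; \mathcal{T}^E_w W_1 \;\oplus\; \ker\Omega_0(w),\qquad w\in W_1.
\]
It is therefore enough to check the vanishing on each summand. On $\mathcal{T}^E_w W_1$ the identity reduces to the Hamilton equation of the first paragraph, since $j^*\Omega_0=\Omega_1$ and, because $H_{W_1}=H_{W_0}|_{W_1}$, also $j^*(d^{\mathcal{T}^E W_0}H_{W_0})=d^{\mathcal{T}^E W_1}H_{W_1}$. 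On $\ker\Omega_0(w)$ the left-hand side is trivially zero, and the right-hand side vanishes by the very definition (\ref{q1}) of $W_1$ as the set of points where $d^{\mathcal{T}^E W_0}H_{W_0}$ annihilates $\ker\Omega_0$.

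Uniqueness within $\Gamma(\mathcal{T}^E W_1)$ is immediate: any other such vakonomic solution satisfies in particular $i_X\Omega_1=d^{\mathcal{T}^E W_1}H_{W_1}$ on the sub-algebroid, hence coincides with $\xi_1$ by injectivity of $\flat_{\Omega_1}$. Using the local basis $\{\mathcal{Y}_{A1},\mathcal{P}_1^A\}$ and the anchor (\ref{rhonu1}) of Remark \ref{subW1}, one reads off in coordinates that the integral curves of $\rho^{\nu_1}(\xi_1)$ on $W_1$ reproduce exactly the three families of vakonomic equations displayed just above the theorem. The main obstacle is the direct sum decomposition together with the vanishing of $d^{\mathcal{T}^E W_0}H_{W_0}$ along $\ker\Omega_0$ over $W_1$; once that piece is in place, the rest follows formally from the Lie algebroid morphism property of $j$.
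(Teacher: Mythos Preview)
Your proof is correct and follows essentially the same route as the paper: both arguments show that $\xi_1$, defined by the restricted Hamilton equation, satisfies the full equation $i_{\xi_1}\Omega_0=d^{\mathcal{T}^E W_0}H_{W_0}$ along $W_1$ by checking it separately on $\mathcal{T}^E_wW_1$ and on $\ker\Omega_0(w)$, the latter using that $W_1$ is precisely where $d^{\mathcal{T}^E W_0}H_{W_0}$ annihilates $\ker\Omega_0$. The only cosmetic difference is that you package this as an abstract direct-sum decomposition, whereas the paper works directly with the explicit local basis $\{\mathcal{V}_a\}$ of $\ker\Omega_0$ and the constraint functions $\varphi_a$.
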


\begin{proof} We have that (see (\ref{H0}))
\[
(d^{{\mathcal T}^{E}W_{0}}H_{W_{0}})({\mathcal V}_{a}) =
\rho^{\nu}({\mathcal V}_{a})(H_{W_{0}}) = \displaystyle \left(
\frac{\partial H_{W_{0}}}{\partial y^{a}}\right) = \varphi_{a}.
\]
Therefore, from (\ref{Omega0}), it follows that
\[
(\Omega_{0})_{|W_{1}}(\xi_{1}, ({\mathcal V}_{a})_{|W_{1}}) =
(d^{{\mathcal T}^{E}W_{0}}H_{W_{0}})({\mathcal V}_{a})_{|W_{1}} =
0.
\]
This implies that
\[
i_{\xi_{1}}(\Omega_{0})_{|W_{1}} = (d^{{\mathcal
T}^{E}W_{0}}H_{W_{0}})_{|W_{1}}
\]
and, consequently, the integral curves of $\xi_{1}$ are solutions
of the vakonomic equations for the system $(L, M)$.

Moreover, if $\xi_{1}' \in \Gamma({\mathcal T}^{E}W_{1})$ is
another solution of the equation
\[
i_{\xi_{1}'}(\Omega_{0})_{|W_{1}} = (d^{{\mathcal
T}^{E}W_{0}}H_{W_{0}})_{|W_{1}}
\]
then it is also a solution of the equation
\[
i_{\xi_{1}'}\Omega_{1} = d^{{\mathcal T}^{E}W_{1}}H_{W_{1}}
\]
which implies that $\xi_{1}' = \xi_{1}$.
\end{proof}

Suppose that $(x^{i}, p_{A})$ are local coordinates on $W_{1}$ as
in Remark \ref{subW1} and that $\{ {\mathcal Y}_{A1}, {\mathcal
P}^{A}_{1} \}$ is the corresponding local basis of $\Gamma
({\mathcal T}^{E}W_{1})$. Then, if $\{ {\mathcal Y}^{A}_{1},
{\mathcal P}_{A1}\}$ is the dual basis of $\{ {\mathcal Y}_{A1},
{\mathcal P}^{A}_{1} \}$, we have that (see (\ref{Omega0}))
\begin{equation}\label{Omega1}
\Omega_{1} = {\mathcal Y}^{A}_{1} \wedge {\mathcal P}_{A1} +
\displaystyle \frac{1}{2} {\mathcal C}_{AB}^{C}p_{C} {\mathcal
Y}^{A}_{1} \wedge {\mathcal Y}^{B}_{1}.
\end{equation}
On the other hand, from (\ref{rhonu1}), it follows that
\[
d^{{\mathcal T}^{E}W_{1}} H_{W_{1}} = \rho^{i}_{A} \displaystyle
\frac{\partial H_{W_{1}}}{\partial x^{i}} {\mathcal Y}^{A}_{1} +
\frac{\partial H_{W_{1}}}{\partial p_{A}} {\mathcal P}_{A1}.
\]
Therefore,
\[
\xi_{1} = \displaystyle \frac{\partial H_{W_{1}}}{\partial p_{A}}
{\mathcal Y}_{A1} - ({\mathcal C}^{C}_{AB} p_{C} \displaystyle
\frac{\partial H_{W_{1}}}{\partial p_{B}} + \rho^{i}_{A}
\displaystyle \frac{\partial H_{W_{1}}}{\partial x^{i}}) {\mathcal
P}^{A}_{1}.
\]
Note that
\[
\displaystyle \frac{\partial H_{W_{1}}}{\partial x^{i}} = \left(
(\frac{\partial}{\partial x^{i}} + \frac{\partial
\mu^{a}}{\partial x^{i}} \frac{\partial}{\partial
y^{a}})(H_{W_{0}})\right)_{|W_{1}}, \;  \displaystyle
\frac{\partial H_{W_{1}}}{\partial p_{A}} = \left(
(\frac{\partial}{\partial p_{A}} + \frac{\partial
\mu^{a}}{\partial p_{A}} \frac{\partial}{\partial
y^{a}})(H_{W_{0}})\right)_{|W_{1}}
\]
which implies that
\[
\begin{array}{rcl}
\xi_{1}(x^{j}, p_{B}) &=& \mu^{a}(x^{j}, p_{B}) {\mathcal Y}_{a1}
+ \Psi^{\alpha}(x^{j}, \mu^{a}(x^{j}, p_{B})) {\mathcal Y}_{\alpha
1}\\[8pt]&& - \left({\mathcal C}_{Aa}^{b} p_{b}\mu^{a}(x^{j}, p_{B}) + \mathcal{C}^{b}_{A\alpha}
p_{b} \Psi^{\alpha}(x^{j}, \mu^{a}(x^{j}, p_{B}))\right.\\[8pt]&& +
\rho^{i}_{A} ( p_{\alpha} \displaystyle \frac{\partial
\Psi^{\alpha}}{\partial x^{i}}_{|(x^{j}, \mu^{a}(x^{j}, p_{B}))} -
\frac{\partial \tilde{L}}{\partial x^{i}}_{|(x^{j}, \mu^{a}(x^{j},
p_{B}))})\left. \right){\mathcal P}_{1}^{A}.
\end{array}
\]

Now, we will introduce the following definition.

\begin{definition}
The vakonomic system $(L, M)$ on the Lie algebroid $\tau: E \to Q$
is said to be \emph{regular} if $\Omega_{1}$ is a symplectic
$2$-section of the Lie algebroid ${\mathcal T}^{E}W_{1} \to
W_{1}$.
\end{definition}

Suppose that $(L, M)$ is a regular vakonomic system and that
$F_{1} \in C^{\infty}(W_{1})$. Then, the \emph{Hamiltonian section
of $F_{1}$} with respect to $\Omega_{1}$ is the section ${\mathcal
H}_{F_{1}}^{\Omega_{1}}$ of the Lie algebroid ${\mathcal
T}^{E}W_{1} \to W_{1}$ which is characterized by the equation
\[
i({\mathcal H}_{F{1}}^{\Omega_{1}})\Omega_{1} = d^{{\mathcal
T}^{E}W_{1}}F_{1}.
\]
Note that $\xi_{1} = {\mathcal H}^{\Omega_{1}}_{H_{W_{1}}}$.

\noindent Using the Hamiltonian sections, one may introduce a bracket of
functions $\{\cdot , \cdot \}_{(L, M)}\kern-2.1pt: C^{\infty}(W_{1}) \times
C^{\infty}(W_{1}) \to C^{\infty}(W_{1})$ as follows
\[
\{F_{1}, G_{1}\}_{(L, M)} = \Omega_{1}({\mathcal
H}_{F_{1}}^{\Omega_{1}}, {\mathcal H}^{\Omega_{1}}_{G_{1}}) =
\rho^{\nu_{1}}({\mathcal H}_{G_{1}}^{\Omega_{1}})(F_{1}),
\]
for $F_{1}, G_{1} \in C^{\infty}(W_{1})$. The bracket $\{\cdot ,
\cdot \}_{(L, M)}$ is called the \emph{ vakonomic bracket
associated with the system} $(L, M)$.

\begin{theorem}\label{vakobrac}
The vakonomic bracket $\{\cdot , \cdot \}_{(L, M)}$ associated
with a regular vakonomic system is a Poisson bracket on $W_{1}$.
Moreover, if $F_{1} \in C^{\infty}(W_{1})$ then the temporal
evolution of $F_{1}$, $\dot{F}_{1}$, is given by
\[
\dot{F}_{1} = \{F_{1}, H_{W_{1}}\}_{(L, M)}.
\]
\end{theorem}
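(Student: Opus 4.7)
The plan is to verify the three axioms of a Poisson bracket and then read off the evolution formula from the definition of the Hamiltonian section.

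First I would record what the hypothesis gives us: since $(L,M)$ is regular, $\Omega_{1}$ is nondegenerate on ${\mathcal T}^{E}W_{1}\to W_{1}$, and it is also $d^{{\mathcal T}^{E}W_{1}}$-closed. Indeed, $\Omega_{E}=-d^{{\mathcal T}^{E}E^{*}}\lambda_{E}$ is exact, so its pullback $\Omega_{0}=({\mathcal T}\pi_{1},\pi_{1})^{*}\Omega_{E}$ is closed (because $({\mathcal T}\pi_{1},\pi_{1})$ is a Lie algebroid morphism and therefore commutes with the differential, cf.\ (\ref{Qorph})), and the further restriction to the subalgebroid ${\mathcal T}^{E}W_{1}$ remains closed. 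Thus $({\mathcal T}^{E}W_{1},\Omega_{1})$ is a symplectic Lie algebroid in the sense of \cite{LMM}, and the proof reduces to the standard Poisson-from-symplectic argument, transplanted to this setting. Skew-symmetry of $\{\cdot,\cdot\}_{(L,M)}$ is immediate from skew-symmetry of $\Omega_{1}$. The Leibniz rule follows from the identity $\{F_{1},G_{1}\}_{(L,M)}=\rho^{\nu_{1}}({\mathcal H}_{G_{1}}^{\Omega_{1}})(F_{1})$ already written in the text, since $\rho^{\nu_{1}}({\mathcal H}_{G_{1}}^{\Omega_{1}})$ is a derivation of $C^{\infty}(W_{1})$.

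The substantive step is Jacobi. The plan is to prove the key lemma
\[
\lcf {\mathcal H}_{F_{1}}^{\Omega_{1}},{\mathcal H}_{G_{1}}^{\Omega_{1}}\rcf^{\nu_{1}}=-{\mathcal H}_{\{F_{1},G_{1}\}_{(L,M)}}^{\Omega_{1}},
\]
which amounts to saying that $F_{1}\mapsto{\mathcal H}_{F_{1}}^{\Omega_{1}}$ is (up to sign) a Lie algebra morphism. To establish it I would use the Lie algebroid Cartan calculus: the identities ${\mathcal L}^{{\mathcal T}^{E}W_{1}}_{X}=i_{X}d^{{\mathcal T}^{E}W_{1}}+d^{{\mathcal T}^{E}W_{1}}i_{X}$ and $i_{\lcf X,Y\rcf^{\nu_{1}}}={\mathcal L}^{{\mathcal T}^{E}W_{1}}_{X}i_{Y}-i_{Y}{\mathcal L}^{{\mathcal T}^{E}W_{1}}_{X}$, which hold on any Lie algebroid. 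Applying them to $\Omega_{1}$ and using $d^{{\mathcal T}^{E}W_{1}}\Omega_{1}=0$ together with $i_{{\mathcal H}_{F_{1}}}\Omega_{1}=d^{{\mathcal T}^{E}W_{1}}F_{1}$ yields ${\mathcal L}_{{\mathcal H}_{F_{1}}}\Omega_{1}=0$ and then
\[
i_{\lcf {\mathcal H}_{F_{1}},{\mathcal H}_{G_{1}}\rcf^{\nu_{1}}}\Omega_{1}={\mathcal L}_{{\mathcal H}_{F_{1}}}(d^{{\mathcal T}^{E}W_{1}}G_{1})=d^{{\mathcal T}^{E}W_{1}}(\rho^{\nu_{1}}({\mathcal H}_{F_{1}})G_{1})=-d^{{\mathcal T}^{E}W_{1}}\{F_{1},G_{1}\}_{(L,M)},
\]
and nondegeneracy of $\Omega_{1}$ closes the lemma.

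Granted the lemma, Jacobi follows by a direct cyclic sum: writing $X_{F}:={\mathcal H}_{F}^{\Omega_{1}}$ and using that $\rho^{\nu_{1}}$ is a Lie algebra morphism,
\[
\{F_{1},\{G_{1},H_{1}\}_{(L,M)}\}_{(L,M)}=\rho^{\nu_{1}}(X_{\{G_{1},H_{1}\}})(F_{1})=-[\rho^{\nu_{1}}(X_{G_{1}}),\rho^{\nu_{1}}(X_{H_{1}})](F_{1}),
\]
and summing cyclically over $(F_{1},G_{1},H_{1})$ gives $0$ after expanding the commutators and collecting identical terms with opposite signs. Finally, the evolution formula is immediate: by Theorem~\ref{unic}, $\xi_{1}={\mathcal H}_{H_{W_{1}}}^{\Omega_{1}}$, so for any $F_{1}\in C^{\infty}(W_{1})$ one has $\dot F_{1}=\rho^{\nu_{1}}(\xi_{1})(F_{1})=\rho^{\nu_{1}}({\mathcal H}_{H_{W_{1}}}^{\Omega_{1}})(F_{1})=\{F_{1},H_{W_{1}}\}_{(L,M)}$. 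The main obstacle is the key lemma, and within it the careful bookkeeping of signs coming from the convention $\{F_{1},G_{1}\}_{(L,M)}=\rho^{\nu_{1}}({\mathcal H}_{G_{1}})(F_{1})$ rather than the other sign; everything else is a routine transcription of the classical symplectic proof into the Lie algebroid Cartan calculus of~\cite{LMM}.
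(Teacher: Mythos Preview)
Your argument is correct. The paper, however, takes a different route for the Poisson property: it introduces the bivector $\Lambda_{1}\in\Gamma(\wedge^{2}{\mathcal T}^{E}W_{1})$ obtained by transporting $\Omega_{1}$ through the musical isomorphism $\flat_{\Omega_{1}}$, observes that $d^{{\mathcal T}^{E}W_{1}}\Omega_{1}=0$ forces $\lcf\Lambda_{1},\Lambda_{1}\rcf^{\nu_{1}}=0$ for the Schouten--Nijenhuis bracket, and then cites the triangular Lie bialgebroid machinery of Mackenzie--Xu \cite{MaXu} to conclude that the induced bracket on $C^{\infty}(W_{1})$ is Poisson. The evolution formula is handled exactly as you do.

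Your approach is more elementary and self-contained: you replay the classical ``symplectic implies Poisson'' argument inside the Lie algebroid Cartan calculus, proving directly that $F_{1}\mapsto{\mathcal H}^{\Omega_{1}}_{F_{1}}$ is an antihomomorphism of Lie algebras and deducing Jacobi from that. This avoids the Schouten bracket and the bialgebroid reference entirely, at the cost of writing out the Cartan-calculus computation. The paper's approach is terser and situates the result in a broader framework (any closed nondegenerate $2$-section on a Lie algebroid gives a triangular structure), but it outsources the actual verification of Jacobi to \cite{MaXu}. Both are standard and valid; yours is arguably preferable if one wants the proof to be readable without external prerequisites.
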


\begin{proof}
Let $\flat_{\Omega_{1}}: {\mathcal T}^{E}W_{1} \to ({\mathcal
T}^{E}W_{1})^{*}$ be the musical isomorphism induced by
$\Omega_{1}$ which is defined by
\[
\flat_{\Omega_{1}}(X_{1}) = i(X_{1})\Omega_{1}(w_{1}), \; \;
\mbox{ for } X_{1} \in {\mathcal T}^{E}_{w_{1}}W_{1} \; \mbox{ and
} w_{1} \in W_{1}.
\]
Then, we may introduce the section $\Lambda_{1}$ of the vector
bundle $\Lambda^{2}{\mathcal T}^{E}W_{1} \to W_{1}$ as follows
\[
\Lambda_{1}(\alpha_{1}, \beta_{1}) =
\Omega_{1}(\flat^{-1}_{\Omega_{1}} \circ \alpha_{1},
\flat^{-1}_{\Omega_{1}} \circ \beta_{1}), \; \; \mbox{ for }
\alpha_{1}, \beta_{1} \in \Gamma({\mathcal T}^{E}W_{1})^{*}.
\]
If $\lcf \cdot , \cdot \rcf^{\nu_{1}}$ is the Schouten-Nijenhuis
bracket associated with the Lie algebroid ${\mathcal T}^{E}W_{1}$
$\to W_{1}$ then, using that $d^{{\mathcal T}^{E}W_{1}}\Omega_{1} =
0$, one may prove that
\[
\lcf \Lambda_{1}, \Lambda_{1} \rcf^{\nu_{1}} = 0.
\]
Thus, $\Lambda_{1}$ is a triangular matrix, the pair $({\mathcal
T}^{E}W_{1}, ({\mathcal T}^{E}W_{1})^{*})$ is a triangular Lie
bialgebroid in the sense of Mackenzie and Xu (see Section 4 in
\cite{MaXu}) and $\{\cdot , \cdot \}_{(L, M)}$ is a Poisson
bracket on $W_{1}$ (see Proposition 3.6 in \cite{MaXu}).

On the other hand, if $F_{1} \in C^{\infty}(W_{1})$ and
$\rho^{\nu_{1}}$ is the anchor map of the Lie algebroid ${\mathcal
T}^{E}W_{1} \to W_{1}$ then
\[
\dot{F}_{1} = \rho^{\nu_{1}}(\xi_{1})(F_{1}) =
\rho^{\nu_{1}}({\mathcal H}_{H_{W_{1}}}^{\Omega_{1}})(F_{1}) =
(d^{{\mathcal T}^{E}W_{1}}F_{1})({\mathcal
H}^{\Omega_{1}}_{H_{W_{1}}}).
\]
Therefore,
\[
\dot{F}_{1} = \{F_{1}, H_{W_{1}}\}_{(L, M)}.
\]
\end{proof}
Suppose that $(x^{i}, p_{A})$ are local coordinates on $W_{1}$ as
in Remark \ref{subW1} and that $\{{\mathcal Y}_{A1}, {\mathcal
P}^{A}_{1}\}$ is the corresponding local basis of
$\Gamma({\mathcal T}^{E}W_{1})$. If $F_{1} \in C^{\infty}(W_1)$
then, from (\ref{rhonu1}), it follows that
\[
d^{{\mathcal T}^{E}W_{1}} F_{1} = \rho^{i}_{A} \displaystyle
\frac{\partial F_{1}}{\partial x^{i}} {\mathcal Y}^{A}_{1} +
\frac{\partial F_{1}}{\partial p_{A}} {\mathcal P}_{A1}.
\]
Consequently, using (\ref{Omega1}), we deduce that
\[
{\mathcal H}^{\Omega_{1}}_{F_{1}} = \displaystyle \frac{\partial
F_{1}}{\partial p_{A}} {\mathcal Y}_{A1} - ({\mathcal C}^{C}_{AB}
p_{C} \displaystyle \frac{\partial F_{1}}{\partial p_{B}} +
\rho^{i}_{A} \displaystyle \frac{\partial F_{1}}{\partial x^{i}})
{\mathcal P}^{A}_{1}.
\]
This implies that
\begin{equation}\label{Poisson1}
\{F_{1}, G_{1}\}_{(L, M)} = \rho^{i}_{A} \left( \displaystyle
\frac{\partial F_{1}}{\partial x^{i}} \frac{\partial
G_{1}}{\partial p_{A}} - \frac{\partial F_{1}}{\partial p_{A}}
\frac{\partial G_{1}}{\partial x^{i}}\right) - {\mathcal
C}_{AB}^{C}p_{C} \displaystyle \frac{\partial F_{1}}{\partial
p_{A}} \frac{\partial G_{1}}{\partial p_{B}}.
\end{equation}
Now, we consider the linear Poisson structure
\[
\{\cdot , \cdot \}_{E^{*}}: C^{\infty}(E^{*}) \times
C^{\infty}(E^{*}) \to C^{\infty}(E^{*})
\]
on $E^{*}$ induced by the Lie algebroid structure on $E$ and the
canonical symplectic structure $\Omega_{E}$ on ${\mathcal
T}^{E}E^{*}$ (see Section \ref{algebroides}).

\begin{corollary}
If $(L, M)$ is a regular vakonomic system on a Lie algebroid $E$
then the restriction $(\pi_{1})_{|W_{1}}: W_{1} \to E^{*}$ of
$\pi_{1}: W_{0} \to E^{*}$ to $W_{1}$ is a local Poisson
isomorphism. Moreover, if ${\mathcal T}(\pi_{1})_{|W_{1}}:
{\mathcal T}^{E}W_{1} \to {\mathcal T}^{E}E^{*}$ is the
corresponding prolongation then the pair $({\mathcal
T}(\pi_{1})_{|W_{1}}, (\pi_{1})_{|W_{1}})$ is a local
symplectomorphism between the symplectic Lie algebroids
$({\mathcal T}^{E}W_{1}, \Omega_{1})$ and $({\mathcal T}^{E}E^{*},
\Omega_{E})$.
\end{corollary}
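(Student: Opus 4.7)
The plan is to pull all three conclusions — local diffeomorphism, Poisson map, and local symplectomorphism — out of the coordinate description of $W_1$ obtained from the implicit function theorem. The regularity hypothesis on $(L,M)$ means $\det(\mathcal{R}_{ab})\neq 0$, so Remark \ref{subW1} furnishes local functions $\mu^a(x^i,p_A)$ with
\[
W_1\cap A_0=\{(x^i,p_A,y^a)\in A_0\mid y^a=\mu^a(x^i,p_A)\},
\]
so that $(x^i,p_A)$ are local coordinates on $W_1$. In these coordinates the restriction $(\pi_1)_{|W_1}\colon W_1\to E^*$ reads $(x^i,p_A)\mapsto(x^i,p_A)$, i.e.\ it is the identity in charts, proving that it is a local diffeomorphism. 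Since $\dim W_1=m+n=\dim E^*$, this local diffeomorphism is what one should expect.

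Next I would check the Poisson-map property by direct comparison of brackets in the chart. The vakonomic bracket, computed in equation (\ref{Poisson1}), is
\[
\{F_1,G_1\}_{(L,M)}=\rho^i_A\!\left(\tfrac{\partial F_1}{\partial x^i}\tfrac{\partial G_1}{\partial p_A}-\tfrac{\partial F_1}{\partial p_A}\tfrac{\partial G_1}{\partial x^i}\right)-\mathcal{C}_{AB}^C p_C\tfrac{\partial F_1}{\partial p_A}\tfrac{\partial G_1}{\partial p_B},
\]
which is formally identical to the linear Poisson bracket (\ref{Poisson}) on $E^*$. Taking $F_1=F\circ (\pi_1)_{|W_1}$ and $G_1=G\circ(\pi_1)_{|W_1}$ and using that $(\pi_1)_{|W_1}$ is the identity in coordinates, one reads off $\{F\circ(\pi_1)_{|W_1},G\circ(\pi_1)_{|W_1}\}_{(L,M)}=\{F,G\}_{E^*}\circ(\pi_1)_{|W_1}$.

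For the symplectic statement I would use naturality of the pullback of Lie algebroid forms. By the very definition of $\Omega_0$ we have $\Omega_0=(\mathcal{T}\pi_1,\pi_1)^*\Omega_E$. Restricting both sides to the Lie subalgebroid $\mathcal{T}^EW_1\hookrightarrow \mathcal{T}^EW_0$ yields $\Omega_1=(\mathcal{T}(\pi_1)_{|W_1},(\pi_1)_{|W_1})^*\Omega_E$. Since $(\pi_1)_{|W_1}$ is a local diffeomorphism, the induced prolongation $\mathcal{T}(\pi_1)_{|W_1}$ is a Lie algebroid morphism whose restriction to each fibre is a linear isomorphism (both fibres having rank $2n$, as $\mathcal{T}^E_{w_1}W_1$ has dimension $n+\dim W_1-m=2n=n+\dim E^*-m=\dim \mathcal{T}^E_{(\pi_1)_{|W_1}(w_1)}E^*$). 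The pair is therefore a local Lie algebroid isomorphism, and the pullback identity above upgrades it to a local symplectomorphism.

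The only potentially delicate point is the fibre-rank matching, since one must be sure that the vertical constraint directions $\{\mathcal{V}_a\}_{|W_1}$ have been genuinely eliminated when passing from $\mathcal{T}^E_{W_1}W_0$ to $\mathcal{T}^EW_1$; but this is exactly what Remark \ref{subW1} guarantees by exhibiting the local basis $\{\mathcal{Y}_{A1},\mathcal{P}^A_1\}$ of $\Gamma(\mathcal{T}^EW_1)$, whose image under $\mathcal{T}(\pi_1)_{|W_1}$ is precisely the basis $\{\mathcal{Y}_A,\mathcal{P}^A\}$ of $\Gamma(\mathcal{T}^EE^*)$. Everything else is a routine coordinate check.
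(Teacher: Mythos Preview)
Your proof is correct and follows essentially the same approach as the paper: both invoke Remark \ref{subW1} for the local diffeomorphism, compare formulas (\ref{Poisson}) and (\ref{Poisson1}) for the Poisson-map property, and deduce the symplectomorphism from the pullback identity $(\mathcal{T}(\pi_1)_{|W_1},(\pi_1)_{|W_1})^*\Omega_E=\Omega_1$. Your additional paragraph on fibre-rank matching is more explicit than the paper's terse ``follows from the first part,'' but the underlying argument is the same.
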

\begin{proof}
 From Remark \ref{subW1}, we obtain that $(\pi_{1})_{|W_{1}}: W_{1}
\to E^{*}$ is a local diffeomorphism. Furthermore, using
(\ref{Poisson}) and (\ref{Poisson1}), we deduce that
\[
\{F \circ (\pi_{1})_{|W_{1}}, G \circ (\pi_{1})_{|W_{1}} \}_{(L,
M)} = \{F, G\}_{E^{*}} \circ (\pi_{1})_{|W_{1}},
\]
for $F, G \in C^{\infty}(E^{*})$. This proves the first part of
the corollary. The second part follows from the first one and
using the fact that $({\mathcal T}(\pi_{1})_{|W_{1}},
(\pi_{1})_{|W_{1}})^{*}\Omega_{E} = \Omega_{1}$.
\end{proof}
\begin{remark}
If $(\pi_{1})_{|W_{1}}: W_{1} \to E^{*}$ is a global
diffeomorphism then we may consider the real function $H$ on
$E^{*}$ defined by
\[
H = H_{W_{1}} \circ (\pi_{1})_{|W_{1}}^{-1}.
\]
In addition, if ${\mathcal H}_{H}^{\Omega_{E}} \in
\Gamma({\mathcal T}^{E}E^{*})$ is the Hamiltonian section of $H$
with respect to the symplectic section $\Omega_{E}$ then $\xi_{1}$
and ${\mathcal H}_{H}^{\Omega_{E}}$ are $({\mathcal
T}(\pi_{1})_{|W_{1}}, (\pi_{1})_{|W_{1}})$-related, that is,
\[
{\mathcal T}(\pi_{1})_{|W_{1}} \circ \xi_{1} = {\mathcal
H}^{\Omega_{E}}_{H} \circ (\pi_{1})_{|W_{1}}.
\]
\end{remark}
\subsection{The variational point of view}\label{subsec:variational}

As it is well known, the dynamics in a vakonomic system is obtained through
the application of a constrained variational principle \cite{Ar}. More precisely,
let $Q$ be the configuration manifold, $L: TQ\to \R$ be the Lagrangian function
and $M\subset TQ$ be the constraint submanifold.
Then, one can consider the set of twice differentiable curves which connect two
arbitrary points $x,y\in Q$ as
\[
{\mathcal C}^2(x,y)=\{ a:[t_0,t_1]\to Q \, | \, a \mbox{ is } C^2, \, a(t_0)=x \mbox{ and }
a(t_1)=y\}.
\]
This is a smooth manifold of infinite dimension and, given $a\in {\mathcal C}^2(x,y)$,
the tangent space of ${\mathcal C}^2(x,y)$ is described as
\[
T_a{\mathcal C}^2(x,y)\kern-1pt=\kern-1pt\{ X\kern-2pt:[t_0,t_1]\to TQ \, | \, X \mbox{ is } C^1,\, X(t)\in T_{a(t)}Q ,
\, X(t_0)\kern-1.5pt=0 \mbox{ and } X(t_1)\kern-1.5pt=0\}.
\]
The elements of $T_a{\mathcal C}^2(x,y)$ can be seen as the infinitesimal variations
of the curve $a$. Next, one can introduce the action functional $\delta S:{\mathcal C}^2(x,y)\to \R$
defined by $a\mapsto \delta S(a) = \int _{t_0}^{t_1} L(\dot{a}(t))dt$.
Thus, the associated vakonomic problem $(Q,L,M)$ consists of extremizing the
functional $\delta S$ restricted to the subset $\widetilde{{\mathcal C}}^2(x,y)$,
which is given by
\[
\widetilde{{\mathcal C}}^2(x,y)=\{ a\in {\mathcal C}^2(x,y) \, |
\, \dot{a}(t)\in M_{a(t)}=M\cap \tau _Q^{-1}(a(t)), \,\forall\;
t\in [t_0,t_1] \},
\]
that is, $a\in \widetilde{{\mathcal C}}^2(x,y)$ is a solution of
the vakonomic system if $a$ is a critical point of $\delta
S_{|\widetilde{{\mathcal C}}^2(x,y)}$. In the particular case when
we do not have constraints, i.e. $M=TQ$, then we recover the
variational way to obtain Euler-Lagrange equations.

Now, consider a Lie algebroid $\tau :E\to Q$ and $L:E\to \R$ a
Lagrangian function on it. In \cite{mart2} it is shown how to
obtain the Euler-Lagrange equations (on the Lie algebroid) from a
variational point of view. Let us recall some aspects related with
this formulation. First, the set of $E$-paths is defined by
\[
{\mathcal Adm}([t_0,t_1],E)=\{ a:[t_0,t_1]\to E \, | \,  \rho\circ a=\frac{d}{dt}(\tau\circ a) \}.
\]
This set is a Banach submanifold of the set of $C^1$ paths in $E$ whose base path is $C^2$.
Two $E$-paths $a_0$ and $a_1$ are said to be $E$-homotopic if there
exists a morphism  of Lie algebroids
\[
\Phi : TI\times TJ \to E,
\]
where $I=[0,1]$, $J=[t_0,t_1]$, $a(s,t)=\Phi(\partial_t|_{(s,t)})$ and $b(s,t)=\Phi(\partial_s|_{(s,t)})$,
such that
\[
\begin{array}{ll}
a(0,t)=a_0(t),\qquad
&b(s,t_0)=0,\\
a(1,t)=a_1(t),\qquad
&b(s,t_1)=0.
\end{array}
\]
$E$-homotopy classes induce a second differentiable Banach manifold structure on
${\mathcal Adm}([t_0,t_1],E)$. The set of $E$-admissible paths with this second manifold
structure will be denoted by ${\mathcal P}([t_0,t_1],E)$. In addition, at each $E$-admissible
curve $a$, the tangent space is given in terms of the so-called complete lifts of sections.
In fact,
\[
T_a{\mathcal P}([t_0,t_1],E)=\{ \eta ^\textbf{c}\in T_a{\mathcal Adm}([t_0,t_1],E)\, | \,
\eta (t_0)=0 \quand \eta (t_1)=0\}.
\]
(for more details, see \cite{CraFer,mart2}).
We recall that if $\{ e_A\}$ is a local basis for $E$ and
$\eta$ is a time-dependent section locally given by
\[
\eta=\eta^A e_A
\]
then $\eta^\textbf{c}$, the complete lift of $\eta$, is the vector field on $E$ given by
\[
\eta^\textbf{c}=\eta^A \rho_A^i\displaystyle\frac{\partial }{\partial x^i} +
(\rho_B^i\displaystyle\frac{\partial \eta^C}{\partial x^i}-\eta^A
{\mathcal C}_{AB}^C)y^B\displaystyle\frac{\partial }{\partial y^C}.
\]
With the second manifold structure that it is introduced on the
space of $E$-paths, it is possible to formulate the variational
principle in a standard way. Let us fix two points $x,y\in M$ and
consider the set ${\mathcal P}([t_0,t_1],E)_{x}^{y}$ of $E$-paths
with f\/ixed base endpoints equal to $x$ and $y$, that is,
\[
{\mathcal P}([t_0,t_1],E)_{x}^{y}=\set{a\in{\mathcal P}([t_0,t_1],E)}{\tau(a(t_0))=x\quand \tau(a(t_1))=y}.
\]
In this setting, for the action functional $\delta S:{\mathcal P}([t_0,t_1],E)\to \R$ given by
\[
\delta S(a)=\int_{t_0}^{t_1} L(a(t))dt,
\]
the critical points of $\delta S$ on
${\mathcal P}([t_0,t_1],E)_{x}^{y}$ are the curves $a \in {\mathcal P}([t_0,t_1],E)_x^y$
which satisfy Euler-Lagrange equations \eqref{free-forces} (see \cite{mart2}).

Now, let $(L, M)$ be a vakonomic system on the Lie algebroid $\tau:
E \to Q$. We will denote by ${\mathcal P}([t_0,t_1],M)_x^y$ the set of $E$-paths in $M$
with fixed base endpoints equal to $x$ and $y$
\[
{\mathcal P}([t_0,t_1], M)_x^y =\big \{ a\in {\mathcal P}([t_0,t_1],E)_x^y \,|\, a(t)\in M, \,\forall
t\in [t_0,t_1] \big \}.
\]
We are going to consider infinitesimal variations (that is, complete
lifts  $\eta ^\textbf{c}$) tangent to the constraint submanifold $M$. We are going
to assume that there exist enough infinitesimal variations (that is, we are studying
the so-called normal solutions of the vakonomic problem). Since $M$
is locally given by $y^\alpha - \Psi ^\alpha (x^i, y^a)=0$, we
deduce that the allowed infinitesimal variations must satisfy
\[
\eta ^\textbf{c} (y^\alpha - \Psi ^\alpha (x^i, y^a))=0,\quad \eta (t_0)=0,\quad \eta (t_1)=0.
\]
Note that if $a\in {\mathcal P}([t_0,t_1],M)_x^y$ then
\[
\eta ^\textbf{c} (y^\alpha - \Psi ^\alpha (x^i, y^a))\circ a=0
\]
if and only if
\begin{equation}\label{relation-inf-var}
\frac{d\eta^\alpha}{dt}=\rho^i_A\eta ^A\frac{\partial \Psi
^\alpha}{\partial x^i}+\frac{d\eta^a}{dt}\frac{\partial \Psi
^\alpha}{\partial y^a}+{\mathcal C}^a_{BA}y^B\eta ^A\frac{\partial
\Psi ^\alpha}{\partial y^a}-{\mathcal C}^\alpha_{BA}y^B\eta ^A.
\end{equation}
Let us look for the critical points of the action functional
$\delta S:{\mathcal P}([t_0,t_1],E)^y_x\to \R$
\[
\begin{array}{rccl}
\delta S:& {\mathcal P}([t_0,t_1],E)^y_x &\to & \R \\
\displaystyle  & a(t)  & \mapsto &\displaystyle \int_{t_0}^{t_1} L(a(t))dt.
\end{array}
\]
If we consider our infinitesimal variations then,
\[
\begin{array}{rcl}
\displaystyle \frac{d}{ds}_{|s=0} \int_{t_0}^{t_1} L (a_s(t))dt &\kern-5pt=&\kern-5pt\displaystyle
\int_{t_0}^{t_1} \frac{\partial L}{\partial x^i}\eta^\textbf{c}_i +
\frac{\partial L}{\partial y^a}\eta^\textbf{c}_a + \frac{\partial
L}{\partial y^\alpha}\frac{\partial \Psi ^\alpha}{\partial x^i}
\eta^\textbf{c}_i +  \frac{\partial L}{\partial
y^\alpha}\frac{\partial \Psi ^\alpha}{\partial y^a} \eta^\textbf{c}_a dt \\
&\kern-5pt=&\kern-5pt\displaystyle \int_{t_0}^{t_1} \frac{\partial \tilde{L}}{\partial
x^i}\eta^\textbf{c}_i + \frac{\partial \tilde{L}}{\partial
y^a}\eta^\textbf{c}_a dt \\ &\kern-5pt=& \kern-5pt\displaystyle \int_{t_0}^{t_1} \frac{\partial
\tilde{L}}{\partial x^i}\rho ^i_\alpha \eta ^\alpha + \frac{\partial
\tilde{L}}{\partial x^i}\rho ^i_a \eta ^a + \frac{\partial
\tilde{L}}{\partial y^a}\eta^\textbf{c}_a dt
\end{array}
\]
Let $p_\alpha$ be the solution of the differential equations
\[
\dot{p}_{\alpha}=\Big ( \frac{\partial \tilde{L}}{\partial
x^i}-p_\beta \frac{\partial \Psi ^\beta}{\partial x^i} \Big ) \rho
^i_{\alpha} -y^a {\mathcal C}^B_{\alpha a}p_B-\Psi ^\beta {\mathcal
C}^B_{\alpha\beta}p_B,
\]
where
\begin{equation}\label{momenta-eq}
p_a=\frac{\partial \tilde{L}}{\partial y^a}-p_\alpha \frac{\partial
\Psi ^\alpha}{\partial y^a}.
\end{equation}
Using Eq. (\ref{relation-inf-var}), we get that
\[
\begin{array}{rcl}
\displaystyle \frac{d}{dt}( p_\alpha \eta ^\alpha)&=&
\displaystyle \dot{p}_\alpha \eta^\alpha + p_\alpha \dot\eta ^\alpha
\\[9pt] &=& \displaystyle p_\alpha \rho^i_a\eta ^a\frac{\partial \Psi
^\alpha}{\partial x^i}+p_\alpha\frac{d\eta^a}{dt}\frac{\partial \Psi
^\alpha}{\partial y^a}+p_\alpha{\mathcal C}^a_{BA}y^B\eta
^A\frac{\partial \Psi ^\alpha}{\partial y^a}-p_\alpha{\mathcal
C}^\alpha_{Ba}y^B\eta ^a \\[7pt] &&\displaystyle +\frac{\partial \tilde{L}}{\partial
x^i}\rho ^i_{\alpha}\eta ^\alpha - y^a \eta ^\alpha {\mathcal
C}^b_{\alpha a}p_b  -\eta ^\alpha \Psi ^\beta {\mathcal
C}^b_{\alpha\beta}p_b.
\end{array}
\]
If we use this equality, we deduce that
\[
\begin{array}{rcl}
\displaystyle \frac{d}{ds}_{|s=0} \int_{t_0}^{t_1} L (a_s(t))dt &=&\displaystyle
 \int_{t_0}^{t_1} \Big ( \frac{\partial \tilde{L}}{\partial x^i}\rho ^i_a \eta
^a + \frac{\partial \tilde{L}}{\partial y^a}\eta^\textbf{c}_a +y^a
\eta ^\alpha {\mathcal C}^b_{\alpha a}p_b + \eta ^\alpha \Psi ^\beta
{\mathcal C}^b_{\alpha\beta}p_b \\[7pt] && \displaystyle - p_\alpha \rho^i_a\eta
^a\frac{\partial \Psi ^\alpha}{\partial x^i}+\frac{d}{dt}\Big (
p_\alpha\frac{\partial \Psi ^\alpha}{\partial y^a}\Big ) \eta^a -
p_\alpha{\mathcal C}^d_{Ba}y^B\eta ^a\frac{\partial \Psi
^\alpha}{\partial y^d} \\[7pt] && \displaystyle - p_\alpha{\mathcal C}^d_{B\beta}y^B\eta
^\beta\frac{\partial \Psi ^\alpha}{\partial y^d} +p_\alpha{\mathcal
C}^\alpha_{Ba}y^B\eta ^a \Big )dt .
\end{array}
\]
Finally, using Eq. (\ref{momenta-eq}) and the fact that
\[
\eta^\textbf{c}_d=\frac{d\eta ^d}{dt}+ \Big ( {\mathcal
C}^d_{ba}\eta ^ay^b  + {\mathcal C}^d_{a\alpha}\eta ^\alpha y^a
+{\mathcal C}^d_{\beta a}\eta ^a\Psi ^\beta + {\mathcal
C}^d_{\beta\alpha}\eta ^\alpha\Psi^\beta \Big ),
\]
we obtain that
\[
\begin{array}{ll}
\displaystyle \frac{d}{ds}_{|s=0} \int_{t_0}^{t_1} L (a_s(t))dt =\\
\kern20pt \displaystyle
 \int_{t_0}^{t_1} \Big [
\Big ( \frac{\partial \tilde{L}}{\partial x^i}-p_\alpha
\frac{\partial \Psi ^\alpha}{\partial x^i} \Big )\rho ^i_{a}
-\frac{d}{dt} \Big ( \frac{\partial \tilde{L}}{\partial y^a}
-p_\alpha \frac{\partial \Psi ^\alpha}{\partial y^a} \Big ) -y^b
{\mathcal C}^B_{a b}p_B-\Psi ^\alpha {\mathcal C}^B_{a\alpha}p_B
\Big ]\eta ^a dt .
\end{array}
\]
Since the variations $\eta ^a$ are free, we conclude that the
equations are
\[
\left \{
\begin{array}{l}
\displaystyle \dot{x}^i=y^a \rho ^i_a+\Psi ^\alpha \rho ^i_\alpha ,\\[10pt]
\displaystyle \dot{p}_{\alpha}=\Big ( \frac{\partial
\tilde{L}}{\partial x^i}-p_\beta \frac{\partial \Psi
^\beta}{\partial x^i} \Big ) \rho ^i_{\alpha} -y^a {\mathcal
C}^B_{\alpha a}p_B-\Psi ^\beta {\mathcal
C}^B_{\alpha\beta}p_B ,\\[10pt]
\displaystyle \frac{d}{dt}\left( \frac{\partial \tilde{L}}{\partial
y^a}-p_\alpha \frac{\partial \Psi ^\alpha}{\partial y^a} \right)=
\Big ( \frac{\partial \tilde{L}}{\partial x^i}-p_\alpha
\frac{\partial \Psi ^\alpha}{\partial x^i} \Big ) \rho ^i_{a} -y^b
{\mathcal
C}^B_{a b}p_B-\Psi ^\alpha {\mathcal C}^B_{a\alpha}p_B,\\[10pt]
\end{array}
\right .
\]
with $p_a=\frac{\partial \tilde{L}}{\partial y^a}-p_\alpha
\frac{\partial \Psi^\alpha}{\partial y^a}$, that is, we obtain the
vakonomic equations for the vakonomic system $(L,M)$ on the Lie
algebroid $\tau :E\to Q$.

\subsection{Examples}\label{subsec:examples}
\subsubsection{Skinner-Rusk formalism on Lie algebroids} Suppose
that $\tau :E\to Q$ is a Lie algebroid, $L:E\to \R$ is a
Lagrangian function and $M=E$, that is, we do not have
constraints. Then,
\[
W_0=E^*\oplus E
\]
and the Pontryagin Hamiltonian $H_{W_0}:E^*\oplus E\to \R$ is
locally given by
\[
H_{W_0} (x^i,p_A,y^A)=y^Ap_A - L(x^i,y^A ).
\]
Let us apply the constraint algorithm. First, if $\Omega _0$ is the
presymplectic section on $W_0$ given by $\Omega _0= ({\mathcal T}\mathrm{pr}_1,
\mathrm{pr}_1)^*\Omega _E$ then, since $Ker\,\Omega _0=
span \{ {\mathcal V}_A \}$, the primary constraint submanifold $W_1$ is
locally characterized by
\begin{equation}\label{momenta}
p_A=\frac{\partial L}{\partial y^A},
\end{equation}
which are the definition of the momenta. $W_1$ is isomorphic to
$E$ and the vakonomic equations reduce to
\begin{equation}\label{Skinner}
\left \{
\begin{array}{l}
\displaystyle \dot{x}^i=y^A \rho ^i_A,\\[10pt]
\displaystyle \frac{d}{dt}\Bigl(\frac{\partial L}{\partial
y^A}\Bigr) = \rho_A ^i\frac{\partial L}{\partial x^i} - {\mathcal
C}_{AB}^C y^B p_C .
\end{array}
\right .
\end{equation}
Using (\ref{momenta}), it follows that Equations
(\ref{Skinner}) are just the Euler-Lagrange equations for $L$ (see
(\ref{free-forces})). We remark that if $E=TQ$, this procedure is
the Skinner-Rusk formulation of Lagrangian Mechanics (see
\cite{SR1,SR2}). On the other hand, using Proposition \ref{prop:charact-sympl},
we deduce that the vakonomic system is regular if and only if the Lagrangian
function $L$ is regular.

\subsubsection{The tangent bundle}
Let $E$ be the standard Lie algebroid $\tau _{TQ}:TQ\to Q$, $M\subseteq TQ$ be
the constraint submanifold such that $\tau _{TQ|M}:M\to Q$ is a surjective submersion
and $L:TQ\to \R$ be a standard Lagrangian function. Suppose that $(q^A,\dot{q}^A)=
(q^A,\dot{q}^a,\dot{q}^\alpha)$ are local fibred coordinates on $TQ$ and that the submanifold
$M$ is locally described by equations of the form
\[
\dot{q}^\alpha = \Psi ^\alpha (q^A,\dot{q}^a).
\]
As we know, the local structure functions of the Lie algebroid $\tau _{TQ}:TQ\to Q$
with respect to the local coordinates $(q^A,\dot{q}^A)$ are
\[
\rho_A^B=\delta _A^B \quand {\mathcal C}_{A B}^C =0.
\]
Thus, if we apply the results of Sections \ref{subsec:vak-bracket}
and \ref{subsec:variational} to this particular case we recover
the geometric formulation of vakonomic mechanics
developed in \cite{CLMM}. In particular, the vakonomic equations
reduce to
\[
\left \{
\begin{array}{l}
\displaystyle  \dot{q}^\alpha =\Psi ^\alpha, \\[10pt]
\displaystyle  \frac{d}{dt}\Big ( \frac{\partial
\tilde{L}}{\partial \dot{q}^a}\Big )- \frac{\partial \tilde{L}}{\partial
q^a}=\dot{p}_\alpha \frac{\partial \Psi ^\alpha}{\partial \dot{q}^a} +
p_\alpha \Big [ \frac{d}{dt}\Big ( \frac{\partial \Psi
^\alpha}{\partial \dot{q}^a}\Big ) - \frac{\partial \Psi
^\alpha}{\partial q^a} \Big ],\\[10pt]
\displaystyle \dot{p}_\alpha =\frac{\partial \tilde{L}}{\partial
q^\alpha }-p_\beta \frac{\partial \Psi ^\beta}{\partial q^\alpha}.
\end{array}
\right .
\]

\subsubsection{Lie algebras} Let $\mathfrak g$ be a real Lie algebra of
dimension $n$. Then, $\mathfrak g$ is a Lie algebroid over a single point.

Now, suppose that ${\mathfrak C}$ is an affine subspace of $\mathfrak g$ modelled over
the vector space $C$ of dimension $n-\bar{m}$ and that $e_0\in \mathfrak C$, $e_0\neq 0$.
We consider a basis $\{ e_A \}=\{ e_a,e_0,e_{\bar{a}}\}=\{ e_a,e_\alpha \}$
of $\mathfrak g$ such that $\{e_a \}$ is a basis of $C$ and
\[
[e_A,e_B] = {\mathcal C}_{AB}^Ce_C.
\]
Denote by $(y^a,y^0,y^{\bar{a}})=(y^a,y^\alpha )$ the linear coordinates on $\mathfrak g$
induced by the basis $\{e_a,e_0,e_{\bar{a}}\}=\{e_a,e_\alpha\}$. Then, $\mathfrak C$
is given by the equations
\[
y^0=1, \quad y^{\bar{a}}=0.
\]
Next, assume that $L:\mathfrak g\to \R$ is a Lagrangian function and denote
by $\tilde{L}:\mathfrak C\to \R$ the restriction of $L$ to $\mathfrak C$. Then,
a curve
\[
\sigma :t\mapsto (y^a (t),y^0(t),y^{\bar{a}}(t))=(y^a (t),1,0,\ldots, 0)
\]
in $\mathfrak C$ is a solution of the vakonomic equations for the constrained system
$(L,\mathfrak C)$ if and only if
\begin{equation}\label{Lavako}
\left\{\begin{array}{l} \displaystyle \frac{d}{dt}\Big (
\frac{\partial\tilde {L}}{\partial y^a}\Big ) = -
\frac{\partial\tilde {L}}{\partial y^c} (y^b {\mathcal C}^c_{ab}
+{\mathcal C}^c_{a0} )
- p_\beta ( y^b{\mathcal C}^\beta _{ab}+{\mathcal C}^\beta _{a0}),\\[10pt]
\displaystyle \dot{p}_\alpha =-\frac{\partial\tilde {L}}{\partial y^c}
( y^b{\mathcal C}^c_{\alpha b}+{\mathcal C}^c_{\alpha 0} )
- p_\beta ( y^b{\mathcal C}^\beta_{\alpha b}+{\mathcal C}^\beta _{\alpha 0} ).
\end{array}\right .
\end{equation}
Now, we consider the curve $\gamma$ in $\mathfrak g^*$ whose components with respect
to the dual basis $\{e^A \}$ are
\[
\gamma (t)=(\frac{\partial \tilde{L}}{\partial y^a}_{|\sigma (t)},p_\alpha (t)),
\]
that is, $\gamma$ is the sum of the curve $\frac{\partial \tilde{L}}{\partial y}$
and the curve
\[
t\mapsto \lambda (t) \in C^o
\]
whose components are $\lambda (t)=(0,p_\alpha (t))$. Here, $C^o\subseteq \mathfrak g^*$
is the annihilator of the subspace $C$. Then, a direct computation,
using \eqref{Lavako}, proves that $\gamma$ satisfies the Euler-Poincar\'e equations
\[
\frac{d}{dt}\Big ( \frac{\partial \tilde{L}}{\partial y} +\lambda
\Big )=ad ^*_\sigma \Big ( \frac{\partial \tilde{L}}{\partial y} +\lambda
\Big ).
\]
This result is just the ``\emph{Optimization Theorem for Nonholonomic Systems on
Lie groups}" which was proved in \cite{KM} (see Theorem 5.1 in \cite{KM}).

\subsubsection{Atiyah algebroids and reduction in subriemannian geometry} Let $\pi:P\to Q$ be a
principal bundle with structural group $G$. The dimension of $P$
(respectively, $Q$) is $n$ (respectively, $m$).

Suppose that $D$ is a distribution on $P$ such that
\[
T_pP=D_p+V_p\pi,\quad \mbox{ for all }p\in P,
\]
where $V\pi$ is the vertical bundle to the principal bundle
projection $\pi$.

We also suppose that $D$ is equipped with a bundle metric
$\langle\; ,\; \rangle_D$. Assuming that both $D$ and $\langle\;
,\; \rangle_D$ are $G$-invariant, then one may construct a
nonholonomic connection as follows (see \cite{BKMM}).

We will suppose that the space
\[
S_p=D_p\cap V_p\pi
\]
has constant dimension $r$, for all $p\in P$. Under this
condition, the horizontal space of the nonholonomic connection at
the point $p$ is $S_p^\perp$, where $S_p^\perp$ is the orthogonal
complement of $S_p$ on $D$ with respect to the bundle metric
$\langle\; ,\; \rangle_D$. We will denote by $\omega ^{nh}:TP\to
\mathfrak g$ the corresponding Lie algebra-valued 1-form.

Define a Lagrangian function on $D$ by $L(v_q)=\frac{1}{2}\langle
v_q, v_q \rangle_D$ where $v_q\in D_q$.

Now, we consider the Atiyah algebroid $\tau _P|G:TP/G\to Q=P/G$
associated with the principal bundle $\pi :P\to Q=P/G$. Note that
the space of orbits $D/G$ of the action of $G$ on $D$ is a vector
subbundle (over $Q$) of the Atiyah algebroid.

Next, we will obtain a local basis of $\Gamma (TP/G)$ adapted to
the vector subbundle $D/G$.

For this purpose, we choose a local trivialization of the
principal bundle $\pi :P\to Q=P/G$ to be $U\times G$, where $U$ is
an open subset of $Q$. Let $e$ be the identity element of $G$ and
assume that there are local coordinates $(x^i)$ on $U$. If
$(\frac{\partial }{\partial x^i})^h$ is the horizontal lift of the
vector field $\frac{\partial }{\partial x^i}$ on $U$ then
$(\frac{\partial }{\partial x^i})^h$ is a $G$-invariant vector
field and $\{ (\frac{\partial }{\partial x^i})^h \}_{i=1,\ldots
,m}$ is a local basis of $S^\perp$.

On the other hand, we consider a family of smooth maps
\[
\tilde{e}_A:U\to \mathfrak g, \quad A=1,\ldots ,n-m,
\]
such that, for every $q\in U$, $\{ \tilde{e}_A(q)\}$ is a basis of
$\mathfrak g$ and $\{ \tilde{e}_a(q)\}_{a=1,\ldots,r}$ is a basis
of the vector space
\[
\mathfrak g^{(q,e)}=\{\xi \in \mathfrak g\, | \, \xi _P (q,e)\in
D_{(q,e)}\}.
\]
Here, $\xi _P$ is the infinitesimal generator of the free action
of $G$ on $P$ associated with $\xi \in \mathfrak g$.

Now, we introduce the vertical vector fields on $U\times G$ given
by
\[
\begin{array}{rcll}
V_A:&U\times G &\to & T(U\times G)\cong TU\times TG \\
    & (q,g)    & \mapsto & \lvec{\tilde{e}_A(q)}(g)
\end{array}
\]
$\lvec{\tilde{e}_A(q)}$ is the left-invariant vector field on $G$
induced by the element $\tilde{e}_A(q)$ of $\mathfrak g$. Then, it
is clear that
\[
\{ (\frac{\partial }{\partial x^i})^h, V_A\} = \{ (\frac{\partial
}{\partial x^i})^h,V_a,V_\alpha \}
\]
is a local basis of $G$-invariant vector fields on $P$ and
\[
\{ (\frac{\partial }{\partial x^i})^h,V_a\}
\]
is a local basis of the space of sections of the vector subbundle
$D\to P$. Thus, we have the corresponding local basis of sections
\[
\{ e_i,e_A\}=\{ e_i,e_a,e_\alpha \}
\]
of the Atiyah algebroid $\tau _P|G: TP/G\to Q=P/G$ which is
adapted to the vector subbundle $D/G\to Q = P/G$.

Denote by $(x^i,\dot{x}^i,y^a, y^\alpha )$ the local coordinates
on $TP/G$ induced by the basis $\{ e_i,e_a,e_\alpha \}$. Then
$D/G$ is locally characterized by the equations
\[
y^\alpha =0.
\]
On the other hand, if
\[
\begin{array}{l}
\displaystyle \omega ^{nh} ( \frac{\partial }{\partial x^i} )=\Gamma ^A_i (q)e_A(q),\\[10pt]
\displaystyle \frac{\partial \tilde{e}_B }{\partial x^i}_{|q}=\chi
^C_{iB}(q)\tilde{e}_C(q),\quad [\tilde{e}_A(q),
\tilde{e}_B(q)]={\mathcal C}^C_{AB}\tilde{e}_C(q),
\end{array}
\]
for all $q\in U$, it follows that
\[
[e_i,e_j]={\mathcal B}^C_{ij}e_C,\quad [e_i,e_B]=\mu
^C_{iB}e_C,\quad [e_A,e_B]={\mathcal C}^C_{AB}e_C,
\]
where
\[
\begin{array}{l}
\displaystyle {\mathcal B}_{ij}^C=\frac{\partial \Gamma
^C_i}{\partial x^j} -\frac{\partial \Gamma ^C_j}{\partial
x^i}+\Gamma _i ^A \Gamma _j ^B{\mathcal C}_{AB}^C
+\Gamma _i^A\chi ^C_{jA}-\Gamma _j^A\chi ^C_{iA},\\[10pt]
\displaystyle \mu _{iB}^C=\chi ^C_{iB}-\Gamma _i^A{\mathcal C}
^C_{AB}.
\end{array}
\]
Moreover, if $\rho :TP/G\to TQ$ is the anchor map of the Atiyah
algebroid $\tau _P|G:TP/G\to Q=P/G$, we have that
\[
\rho (e_i)=\frac{\partial }{\partial x^i},\quad \rho (e_A)=0.
\]
Since the bundle metric $\langle\; ,\; \rangle_D$ is
$G$-invariant, it induces a bundle metric $\langle\; ,\;
\rangle_{D/G}$ on $D/G$, with associated lagrangian function
${l}:D/G\to \R$.  Therefore,  we deduce that a curve
\[
\sigma :t\mapsto (x^i(t),\dot{x}^i(t),y^a (t),y^\alpha (t))=
(x^i(t),\dot{x}^i(t),y^a (t),0)
\]
in $D/G$ is a solution of the vakonomic equations for the
constrained system $(l,D/G)$ if and only if
\[
\left\{
\begin{array}{l}
\displaystyle \frac{d}{dt}( \frac{\partial {l}}{\partial
\dot{x}^i})= \frac{\partial {l}}{\partial x^i}-(\dot{x}^j{\mathcal
B}_{ij}^a+y^b\mu _{ib}^a)\frac{\partial {l}}{\partial y^a} -
 (\dot{x}^j{\mathcal B}_{ij}^\alpha +y^b\mu _{ib}^\alpha )p_\alpha,\\[10pt]
\displaystyle \frac{d}{dt}( \frac{\partial {l}}{\partial y^a})=
(\dot{x}^j\mu _{ja}^c-y^b{\mathcal C}_{ab}^c)\frac{\partial
{l}}{\partial y^c}
+(\dot{x}^j\mu _{ja}^\alpha-y^b{\mathcal C}_{ab}^\alpha)p_\alpha ,\\[10pt]
\displaystyle \dot{p}_\alpha = (\dot{x}^i\mu
^b_{i\alpha}-y^a{\mathcal C}_{\alpha a}^b)\frac{\partial
{l}}{\partial y^b} +(\dot{x}^i\mu _{i\alpha}^\beta -y^a{\mathcal
C}_{\alpha a}^\beta )p_\beta .
\end{array}\right .
\]
Observe that if the basis of sections of $D/G$,  $\{e_i, e_a\}$,
is orthonormal  then
\[
l(x^i, \dot{x}^i, y^a)=\frac{1}{2}\left( \sum_i(\dot{x}^i)^2
+\sum_a (y^a)^2\right)
\]
and the vakonomic equations are:
\[
\left\{
\begin{array}{l}
\displaystyle{ \ddot{x}^i= -\sum_a(\dot{x}^j{\mathcal
B}_{ij}^a+y^b\mu _{ib}^a)y^a -
 (\dot{x}^j{\mathcal B}_{ij}^\alpha +y^b\mu _{ib}^\alpha )p_\alpha},\\[10pt]
\displaystyle{\dot{y}^a= \sum_c(\dot{x}^j\mu _{ja}^c-y^b{\mathcal
C}_{ab}^c) y^c
+(\dot{x}^j\mu _{ja}^\alpha-y^b{\mathcal C}_{ab}^\alpha)p_\alpha ,}\\[10pt]
\displaystyle \dot{p}_\alpha = \sum_b(\dot{x}^i\mu
^b_{i\alpha}-y^a{\mathcal C}_{\alpha a}^b)y^b +(\dot{x}^i\mu
_{i\alpha}^\beta -y^a{\mathcal C}_{\alpha a}^\beta )p_\beta .
\end{array}\right .
\]

Of course, it is also possible to study the abnormal solutions
taking the lagrangian $l\equiv 0$.

\begin{example} (See \cite{JurSha, Montgo}).
{\rm As a simple but illustrative example, consider $\R^3$ with
the distribution $D=\ker\omega$ where $\omega$ is the Martinet
1-form:  $\displaystyle{\omega=dx^3-\frac{(x^1)^2}{2} dx^2}$.
Consider the vector fields generating $D$:
\[
\frac{\partial}{\partial
x^2}+\frac{(x^1)^2}{2}\frac{\partial}{\partial x^3}, \quad
\frac{\partial}{\partial x^1},
\]
and the bundle metric  $\langle\; ,\; \rangle_D$ which makes both
vector fields  orthonormal.

Take now the action by translations:
\[
\begin{array}{rcl}
\R^2\times \R^3&\longrightarrow& \R^3\\
((a, b), (x^1, x^2, x^3))& \longmapsto& (x^1, x^2+a, x^3+b)\; .
\end{array}
\]
We have a principal bundle structure $\pi: \R^3\to \R^3/\R^2\equiv
\R$ being both  $D$ and $\langle\; ,\; \rangle_D$
$\R^2$-invariant.

Observe that $\displaystyle{V\pi=\hbox{span }\{
\frac{\partial}{\partial x^2}, \frac{\partial}{\partial x^3}\}}$,
 $S=D\cap V\pi=\hbox{span }\{\displaystyle{V_1=\frac{\partial}{\partial
x^2}+\frac{(x^1)^2}{2}\frac{\partial}{\partial x^3}}\}$ and
$S^\perp=\hbox{span
 }\{\displaystyle{\frac{\partial}{\partial x^1}} \}$.

 On the Atiyah algebroid $T\R^3/\R^2\cong \R\times \R^3\to\R$ we have the induced
 basis of sections $e_i:\R\to \R\times \R^3$, $i=1, 2, 3$:
 \begin{eqnarray*}
 e_1(x)&=& (x; (1, 0, 0)), \\
e_2(x)&=&(x; (0, 1, \frac{x^2}{2}))\\
e_3(x)&=&(x; (0, 0, 1))
\end{eqnarray*}
which induces coordinates $(x, \dot{x}, y^1, y^2)$.
 Then, $D/\R^2$ is
characterized by the equation $y^2=0$.

Observe that
\[
[e_1, e_2]=x e_3, \qquad [e_1, e_3]=0, \qquad [e_2, e_3]=0\; ,
\]
and
\[
l(x; \dot{x}, y^1)=\frac{1}{2}((\dot{x})^2+(y^1)^2).
\]
Consequently the vakonomic equations are:
\begin{eqnarray*}
\dot{p}_2&=&0\\
\ddot{x}&=& -y^1xp_2\\
\dot{y}^1&=& \dot{x}xp_2\; .
\end{eqnarray*}
That is, $p_2=k$ constant, and the vakonomic equations are
precisely,
\begin{eqnarray*}
\ddot{x}&=& -k y^1 x\\
\dot{y}^1&=& k\dot{x} x\; .
\end{eqnarray*}

Since $\frac{1}{2}(\dot{x}^2+ (y^1)^2)$ is a constant of motion
then we can take $\dot{x}(t)= r\sin\theta(t)$ and $y^1(t)=r\cos
\theta(t)$ where $\theta(t)$ verifies the equation of  pendulum
$\ddot\theta(t)=-k r \sin \theta(t)$ (see also Example
\ref{ball}).

 }

\end{example}

\subsubsection{Optimal Control on Lie algebroids as vakonomic
systems}(See \cite{CLMM2,reports}). Let $\tau :E\to Q$ be a Lie
algebroid and $C$ be a manifold fibred over the state manifold
$\pi :C\to Q$. We also consider a section $\sigma :C\to E$ along
$\pi$ and an index function $l:C\to \R$.

One important case happens when the section $\sigma :C\to E$ along
$\pi$ is an embedding, in this case, the image $M=\sigma(C)$ is a
submanifold of $E$. Moreover, since $\sigma: C\longrightarrow M$
is a diffeomorphism, we can define $L: M\longrightarrow \R$ by
$L=l\circ \sigma^{-1}$. In conclusion, it is equivalent to analyze
the optimal control defined by $(l, \sigma)$ (applying the
Pontryaguin maximum principle) that to study the vakonomic problem
on the Lie algebroid $\tau :E\to Q$ defined by $(L, M)$.

More generally (without assuming the embedding condition), we can
construct the prolongation $\tau ^\pi :{\mathcal T}^E C\to C$ of
the Lie algebroid $\tau:E\to Q$ over the smooth map $\pi :C\to Q$,
that is
\[
{\mathcal T}^EC = \{ (e,X_p) \in E_{\pi (p)} \times T_pC\, | \,
\rho (e) =T\pi (X_p) \}.
\]
Moreover, we have the constraint submanifold $M$ characterized
by
\[ M = \{ (e,X_p) \in {\mathcal T}^E_pC \, | \, \sigma (p)=e \}
\]
and the lagrangian function $L :{\mathcal T}^EC \to \R$ given by
$L = l\circ \tau ^\pi$. This is the \textit{vakonomic system
associated with the optimal control system}. If $E=TQ$ is the
tangent bundle of the state space $Q$, it is not difficult to show
that the prolongation of $TQ$ along $\pi :C\to Q$ is just the
tangent bundle $TC$. Under this isomorphism, the constraint
submanifold is
\[
M = \{ X \in TC \, | \, T\pi (X)=\sigma (\tau _C(X) ) \}.
\]
Thus, we recover the construction in \cite{CLMM2}, Section 4.

\begin{example}\label{ball}

{\rm Consider the following mechanical problem
\cite{CoLeMaMa,Koon,KM}. A (homogeneous) sphere of radius $r=1$,
mass $m$ and inertia about any axis $k^2,$ rolls without sliding
on a horizontal table which rotates with constant angular velocity
$\Omega$ about the $x^3$-axis. The coordinates of the point of
contact of the sphere with the plane are $(x^1, x^2)$.  The
configuration space of the sphere is $Q=\R^2\times SO(3)$ and the
Lagrangian of the system corresponds to the kinetic energy
\[
K(x^1, x^2; \dot{x}^1, \dot{{x}}^2, \omega_{x^1}, \omega_{x^2},
\omega_{x^3})=\frac{1}{2}(m(\dot{{x}}^1)^2+m(\dot{{x}}^2)^2 +
mk^2(\omega_{x^1}^2 + \omega_{x^2}^2 + \omega_{x^3}^2)),
\]
where $(\omega_{x^1}, \omega_{x^2}, \omega_{x^3})$ are the
components of the angular velocity of the sphere.

Since the ball is rolling without sliding on a rotating table then
the system is subjected to the affine constraints:
\[
\begin{array}{lcr}
\dot{{x}}^1-\omega_{x^2}=-\Omega {x^2},\\
\dot{{x}}^2 + \omega_{x^1}=\Omega {x^1},
\end{array}
\]
where $\Omega$ is constant. Moreover, it is clear that
$Q=\R^2\times SO(3)$ is the total space of a trivial principal
$SO(3)$-bundle over $\R^2$ and the bundle projection $\phi:Q\to
\R^2$ is just the canonical projection on the first factor.
Therefore, we may consider the corresponding Atiyah algebroid
$TQ/SO(3)$ over $\R^2$.

Since the Atiyah algebroid $TQ/SO(3)$ is isomorphic to the product
manifold $T\R^2\times {\mathfrak {so}}(3)\cong T\R^2\times \R^3$,
then a section of $TQ/SO(3)\cong T\R^2\times \R^3 \to \R^2$ is a
pair $(X,u)$, where $X$ is a vector field on $\R^2$ and $u:\R^2\to
\R^3$ is a smooth map. Therefore, a global basis of sections of
$T\R^2\times \R^3\to \R^2$ is
\[
e_1=(\displaystyle\frac{\partial}{\partial {x^1}},0),\,\,
e_2=(\displaystyle\frac{\partial}{\partial x^2},0),\,\,
e_3=(0,E_1),\,\, e_4=(0,E_2),\,\, e_5=(0,E_3).
\]
where $\{E_1, E_2, E_3\}$ is the canonical basis on $\R^3$.

The anchor map $\rho: T\R^2\times \R^3\to T\R^2$ is the projection
over the first factor and if $\lcf\cdot, \cdot \rcf$ is the Lie
bracket on the space $\Sec{TQ/SO(3)}$ then the only non-zero
fundamental Lie brackets are
\[
\lcf e_3,e_4\rcf=e_5,\;\;\;\lcf e_4,e_5\rcf=e_3,\;\;\; \lcf
e_5,e_3\rcf=e_4.
\]

It is clear that the Lagrangian and the nonholonomic constraints
are defined on the Atiyah algebroid $TQ/SO(3)$ (since the system
is $SO(3)$-invariant). In fact, we have a nonholonomic system on
the Atiyah algebroid $TQ/SO(3)\cong T\R^2\times \R^3.$ This kind
of systems was recently analyzed by J. Cort\'es {\it et al}
\cite{CoLeMaMa} (in particular, this example was carefully
studied).

After some computations the equations of motion for this
nonholonomic system are precisely
\begin{equation}\label{qwe}
\left.\begin{array}{rcl}
\dot{{x}}^1-\omega_{x^2}&=&-\Omega {x^2},\\
\dot{{x}}^2 + \omega_{x^1}&=&\Omega {x^1},\\
\omega_{x^3}&=&c
\end{array}
\right\}
\end{equation} where $c$ is a constant,
together with
\begin{eqnarray*}
\ddot{{x}}^1+\frac{k^2\Omega}{1+k^2}\dot{{x}}^2=0\\
\ddot{{x}}^2-\frac{k^2\Omega}{1+k^2}\dot{{x}}^1=0
\end{eqnarray*}

Now, we pass to an optimization problem. Assume full control over
the motion of the center of the ball (the shape variables) and
consider the cost function
\[
L({x^1}, {x^2}; \dot{{x}}^1, \dot{{x}}^2, \omega_{x^1},
\omega_{x^2}, \omega_{x^3})=\frac{1}{2}\left(
(\dot{{x}}^1)^2+(\dot{{x}}^2)^2\right)\; ,
\]
and the following optimal control problem:

{\bf PLATE-BALL PROBLEM} \cite{KM}. {\sl Given points $q_0, q_1\in
Q$, find an optimal control curve $({x^1}(t), {x^2}(t))$ on the
reduced space that steer the system from $q_0$ to $q_1$, minimizes
$\int_0^1 \frac{1}{2}\left(
(\dot{{x}}^1)^2+(\dot{{x}}^2)^2\right)\, dt$, subject to the
constraints defined by Equations (\ref{qwe}). }

Observe that the Plate-Ball problem is equivalent to the optimal
control problem given by the section $\sigma: \R^2\times \R^2\to
T\R^2\times \R^3$ along $T\R^2\times \R^3\to \R^2$ given by
\[
\sigma(x^1, x^2; u^1, u^2)=(x^1, x^2; u^1, u^2,-{u^2}+{\Omega
{x^1}}, {u^1}+{\Omega x^2},c)\, .
\]
and index function $l(x^1, x^2; u^1,
u^2)=\frac{1}{2}((u^1)^2+(u^2)^2)$. Since $\sigma$ is obviously an
embedding this is equivalent to the proposed Plate-Ball problem.

A necessary condition for optimality of the Plate-Ball problem is
given by the corresponding vakonomic equations.   For coherence
with the notation introduced along this paper, denote by
\[
y^1=\dot{x}^1, \ y^2=\dot{x}^2, \ y^3=\omega_{x^1},\
y^4=\omega_{x^2}, \ y^5=\omega_{x^3}
\]
Therefore, the vakonomic problem is given by the Lagrangian
$L=\frac{1}{2}\left( ({y^1})^2+({y^2})^2\right)$ and the
submanifold $M$ defined by the constraints:
\[
\begin{array}{rcl}
y^3&=&\Psi^3({x^1}, {x^2}, y^1, y^2)=-{y^2}+{\Omega {x^1}},\\
y^4 &=&\Psi^4({x^1}, x^2, y^1, y^2)={y^1}+{\Omega x^2},\\
y^5&=&\Psi^5({x^1}, x^2, y^1, y^2)=c
\end{array}
\]
After simple computations we obtain that the vakonomic equations
are:
\begin{eqnarray*}
\dot{p}_3&=& c p_4 -\left({y^1}+{\Omega
x^2}\right)p_5\\
\dot{p}_4&=& -c p_3 -\left({y^2}-{\Omega
{x^1}}\right)p_5\\
\dot{p}_5&=& \left({y^1}+{\Omega
x^2}\right)p_3+\left({y^2}-{\Omega
{x^1}}\right)p_4\\
\frac{d}{dt}\left(y^1-{p_4}\right)&=&-{\Omega}
p_3\\
\frac{d}{dt}\left(y^2+{p_3}\right)&=&-{\Omega}
p_4\\
y^1=\dot{x}^1\, , && y^2=\dot{x}^2\; .
\end{eqnarray*}

The system is obviously regular since the matrix
\[
\left( \frac{\partial^2 L}{\partial y^a\partial
y^b}-\sum_{\alpha=3}^5 p_{\alpha}\frac{\partial^2
\Psi^{\alpha}}{\partial y^a\partial y^b}\right)_{1\leq a,b\leq 2}
= \left(
\begin{array}{cc}
1&0\\
0&1
\end{array}
\right)
\]
is non-singular. Therefore, there exists a unique solution of the
vakonomic equations on $W_1$, determined by the conditions:
\begin{eqnarray*}
p_1&=&\frac{\partial L}{\partial y^1}-p_{\alpha}\frac{\partial \Psi^{\alpha}}{\partial y^1}=y^1-p_4\\
p_2&=&\frac{\partial L}{\partial y^2}-p_{\alpha}\frac{\partial
\Psi^{\alpha}}{\partial y^2}=y^2+p_3
\end{eqnarray*}
Therefore, it follows that $({x^1}, x^2, p_1, p_2, p_3, p_4, p_5)$
are local coordinates on $W_1$ (or, from Corollary 5.9, on $E^*$,
if you prefer).

Moreover, on $W_1$ we have a well-defined Poisson bracket $\{\; ,
\; \}_{(L, M)}$ whose non-vanishing terms are:
\begin{eqnarray*}
&\{{x^1}, p_1\}_{(L, M)}=1\; ,\  \{x^2, p_2\}_{(L, M)}=1&\\
&\{p_3, p_4\}_{(L, M)}=-p_5\; ,\  \{p_3, p_5\}_{(L, M)}=p_4\, ,\
\{p_4, p_5\}_{(L, M)}=-p_3&
\end{eqnarray*}
In these coordinates the Hamiltonian $H_{W_1}$ is:
\[
H_{W_1}({x^1}, x^2, p_1, p_2, p_3, p_4,
p_5)=\frac{1}{2}(p_1+p_4)^2+\frac{1}{2}(p_2-p_3)^2+cp_5+p_3\Omega
{x^1}+p_4\Omega x^2
\]
and the vakonomic equations are
\begin{eqnarray*}
\dot{p}_1&=& \{p_1, H_{W_1}\}_{(L, M)}=-{\Omega}
p_3\\
\dot{p}_2&=& \{p_2, H_{W_1}\}_{(L, M)}=-{\Omega}
p_4\\
\dot{p}_3&=& \{p_3, H_{W_1}\}_{(L, M)}= c p_4
-\left(p_1+p_4+{\Omega x^2}\right)p_5\\
\dot{p}_4&=& \{p_4, H_{W_1}\}_{(L, M)}= -c p_3
-\left(p_2-p_3-{\Omega
{x^1}}\right)p_5\\
\dot{p}_5&=& \{p_5, H_{W_1}\}_{(L, M)}= \left(p_1+p_4+{\Omega
x^2}\right)p_3+\left(p_2-p_3-{\Omega
{x^1}}\right)p_4\\
\dot{x}^1&=& \{{x^1}, H_{W_1}\}_{(L, M)}=p_1+p_4\\
\dot{x}^2&=& \{x^2, H_{W_1}\}_{(L, M)}=p_2-p_3\\
\end{eqnarray*}

One of the most studied cases is when $c=0$ (the sphere is rolled
that its angular velocity is always parallel to the horizontal
plane) and $\Omega=0$ (not rotation of the plane). In this case
the equations reduced to
\begin{eqnarray*}
&\dot{p}_3=-{y^1}p_5\; , \dot{p}_4= -{y^2}p_5\, ,\
\dot{p}_5= {y^1}p_3+{y^2}p_4&\\
&\frac{d}{dt}\left(y^1-{p_4}\right)=0\, ,
\frac{d}{dt}\left(y^2+{p_3}\right)=0&\\
&y^1=\dot{x}^1\, ,  y^2=\dot{x}^2\; .&
\end{eqnarray*}
{}From these equations it is easy to deduce that
\[
\frac{d}{dt}((y^1)^2+(y^2)^2)=0\;
\]
Therefore $y^1(t)=\cos \theta(t)$ and $y^2(t)=\sin \theta(t)$,
when $1=\sqrt{(y^1)^2+(y^2)^2}$. Moreover, it is easy to deduce
that $p_5=\dot{\theta}$.

Now, since $y^1=p_4+k_1$ and $y^2=-p_3+k_2$ with $k_1, k_2\in \R$
constants, taking $k_1=r\cos\varphi$ and $k_2=r\sin\varphi$ then
from Equation $\dot{p}_5= {y^1}p_3+{y^2}p_4$ we deduce that
\[
\ddot{\theta}=r\cos\theta\sin\varphi -
r\sin\theta\cos\varphi=-r\sin(\theta-\varphi)
\]
that is, the  angle $\theta$ satisfies the equation of pendulum,
while the  coordinates of the contact point satisfy the ODEs:
$\dot{{x}}^1=\cos \theta$ and $\dot{x}^2=\sin \theta$. The
remarkable result is that these equations say  that the contact
point of the sphere rolling optimally traces an Euler elastica
(see \cite{Jurdje}). }
\end{example}

\section{Conclusions and future work}
We have developed a general geometrical
setting for constrained mechanical systems in the context of Lie
algebroids.  We list the main results obtained in this paper:

\begin{itemize}
\item
We develop a constraint algorithm for presymplectic Lie algebroids
and discuss the reduction of presymplectic Lie algebroids.

\item For a singular Lagrangian function on a Lie algebroid,
we look for solutions of the corresponding dynamical equation,
applying the previously introduced constraint algorithm. In
addition, we find the submanifolds where the solution is a SODE (a
second order differential equation). The theory is illustrated
with an example.

\item We study vakonomic mechanics on Lie algebroids. As for
singular Lagrangian systems, we deduce the vakonomic equations by
means of a cons\-traint algorithm. We define the vakonomic bracket
in this setting. Furthermore, the variational point of view and
some explicit examples are discussed. We remark that we just look
for the so-called normal solutions. We postpone for a future work
a detailed variational analysis of vakonomic mechanics on Lie
algebroids, including abnormal solutions... We will illustrate our
theory with an example related with Optimal Control Theory, but
our approach may cover other interesting examples (see
\cite{Bl,HusBlo}) and admits, in a standard way, variational
discretizations (see \cite{BM,IMMM}).

\end{itemize}

\section*{Acknowledgments}
This work has been partially supported by MEC (Spain) Grants MTM
2006-03322, MTM 2007-62478, project ``Ingenio Mathematica"
(i-MATH) No. CSD 2006-00032 (Consolider-Ingenio 2010) and
S-0505/ESP/0158 of the CAM. D. Iglesias wants to thank MEC for a
Research Contract ``Juan de la Cierva".

\end{document}